\RequirePackage{fix-cm}
\documentclass[smallextended, envcountsame]{svjour3}
\smartqed
\journalname{}

\usepackage[unicode = false,
    pdftoolbar = true,
    colorlinks = true,
    linkcolor = blue,
    citecolor = blue,
    filecolor = black,
    urlcolor = blue,
    breaklinks = true]{hyperref}

\usepackage[utf8]{inputenc}
\usepackage{listings}  
\usepackage{enumitem}  
\usepackage{amsmath}   
\usepackage{amssymb}   
\usepackage{xcolor}    
\usepackage{graphicx}  
\usepackage{verbatim}
\usepackage{geometry}
\usepackage{mathtools}
\usepackage{float}
\usepackage{bm}
\usepackage[ruled]{algorithm2e}
\usepackage{placeins}
\usepackage{listings}
\usepackage{multirow}
\usepackage{amsthm}
\usepackage[justification=centering]{caption}

\definecolor{gab}{HTML}{50c878}
\definecolor{pat}{HTML}{bf68ff}
\definecolor{sham}{HTML}{7faaff}

\allowdisplaybreaks[4]

\DeclareMathOperator*{\argmax}{argmax}

\DeclareMathAlphabet{\mymathbb}{U}{BOONDOX-ds}{m}{n}

\makeatletter
\newcommand*{\rom}[1]{\expandafter\@slowromancap\romannumeral #1@}
\makeatother

\title{Accuracy and Relationships of Quadratic Models in Derivative-free Optimization}

\author{Yiwen Chen \and Warren Hare \and Lindon Roberts}
\institute{Yiwen Chen \at Department of Mathematics, University of British Columbia, Kelowna, British Columbia, V1V 1V7, Canada.  Chen's research is partially funded by the Natural Sciences and Engineering Research Council of Canada (les recherches de Chen sont partiellement financ\'ees par le Conseil de recherches en sciences naturelles et en g\'enie du Canada), Discover Grant \#2023-03555.  Chen was also supported by the Australian Research Council Discovery Early Career Award DE240100006.\\
\email{yiwchen@student.ubc.ca} \and Warren Hare \at Department of Mathematics, University of British Columbia, Kelowna, British Columbia, V1V 1V7, Canada.  Hare's research is partially funded by the Natural Sciences and Engineering Research Council of Canada (les recherches de Hare sont partiellement financ\'ees par le Conseil de recherches en sciences naturelles et en g\'enie du Canada), Discover Grant \#2023-03555.\\ \email{warren.hare@ubc.ca} \and Lindon Roberts \at School of Mathematics and Statistics, University of Melbourne, Parkville, Victoria 3010, Australia.  Roberts was supported by the Australian Research Council Discovery Early Career Award DE240100006.\\
\email{lindon.roberts@unimelb.edu.au}}
\date{\today}

\begin{document}

\maketitle

\begin{abstract}
    We study three quadratic models in model-based derivative-free optimization: the minimum norm (MN), minimum Frobenius norm (MFN), and quadratic generalized simplex derivative (QS) models.  Despite their widespread use, their approximation accuracy and relationships have not been systematically explored.  We establish fully linear error bounds for all three models, removing the uniformly bounded model Hessian assumption required in existing MN analyses and deriving the first such results for the QS model.  We further analyze Hessian approximation accuracy via directional error bounds, showing that all three models achieve fully quadratic accuracy along sample directions under a mild condition on the sample set.  This reveals a form of directional fully quadratic accuracy not captured by existing theory. Finally, we characterize the relationships among these models, identifying conditions under which they coincide and clarifying their structural connections.
\end{abstract}

\vspace{0.1cm}
\noindent\textbf{Keywords}: derivative-free optimization; quadratic models; error bounds

\vspace{0.1cm}
\noindent\textbf{MSC codes}: primary 90C56; secondary 65G99, 65K05. 

\section{Introduction}
    Derivative-free optimization (DFO) methods are an essential tool for solving optimization problems where the derivatives of the objective and/or constraint functions are unavailable, unreliable, or prohibitively expensive to compute \cite{audet2017derivative,conn2009introduction}.   One major class of DFO methods, known as model-based DFO methods, constructs surrogate models using only function evaluations.  The approximation accuracy of these models depends critically on the geometry of the set of sampled points, which we refer to as a \emph{sample set}. 

    Convergence analyses of model-based DFO methods typically rely on model accuracy assumptions, most notably that the models are \emph{fully linear} or \emph{fully quadratic} \cite{Audet2020,roberts2025introduction}. A common approach for constructing such models is quadratic interpolation. In this paper, we study three quadratic models: the minimum norm (MN) model \cite{conn1996algorithm}, the minimum Frobenius norm (MFN) model~\cite{conn1998derivative}, and the quadratic generalized simplex derivative (QS) model \cite{chen2026relationships}. The QS model is constructed using generalized simplex gradients \cite{conn2008geometry,custodio2008using,custodio2007using,regis2015calculus} and generalized simplex Hessians \cite{hare2024matrix}.  Although these models have appeared in various forms in the literature, a unified theoretical comparison of their approximation accuracy and structural relationships remains lacking.

    Our first contribution is the derivation of fully linear error bounds for all three models.  Existing results for the MN model rely on a uniformly bounded model Hessian, while those for the MFN model are established under different settings or sample set geometry measures.  We remove the bounded Hessian assumption for the MN model and establish the \emph{first} fully linear error bounds for the QS model, thereby providing a unified perspective on their approximation accuracy.
    
    We next study the accuracy of the Hessian approximations arising from these models through directional error bounds.  Under a mild structural assumption on the sample set, we show that all three models achieve fully quadratic accuracy along sample directions.  More precisely, the directional error ranges from $\mathcal{O}(1)$ to $\mathcal{O}(\Delta_{D^Y})$, where $\Delta_{D^Y}$ is the maximum length of the sample directions.  This reveals that these models can significantly exceed the accuracy of standard fully linear models in a directional sense.  Similar phenomena have been observed in \cite{Cao2023}, where the sharp error bound of linear interpolation/extrapolation depends on the location of the point of interest. These findings provide new insights into how second-order information can be captured without requiring fully determined quadratic interpolation.
    
    Finally, we analyze the relationships among the three models.  We identify conditions under which they coincide and clarify the structural connections between their constructions. These results not only enhance the theoretical understanding of these models but also suggest simpler and potentially more efficient construction procedures.
    
    In summary, this paper advances the theoretical foundations of model-based DFO by establishing new fully linear error bounds, deriving directional Hessian error bounds, and clarifying the relationships among several widely used quadratic models. These results have implications for both the analysis and practical implementation of DFO methods.

    The remainder of the paper is organized as follows.  Section~\ref{sec:notation&defs} introduces notation and background definitions.  Section~\ref{sec:fullylinearebs} establishes fully linear error bounds.  Section~\ref{sec:directionalHessebs} presents directional Hessian error bounds.  Section~\ref{sec:relationships} analyzes the relationships among the three models.  Section~\ref{sec:conclusion} concludes this paper and discusses future directions.

\section{Notation and Background}\label{sec:notation&defs}
    In this section, we introduce the notation and background definitions used throughout the paper. We begin by presenting the notation, followed by the formal definitions, and defer all examples to Subsection \ref{subsec:examples}.

\subsection{Notation}
    We use $\mymathbb{0}$ and $\mymathbb{1}$ to denote the all-zero and all-one vectors with appropriate dimensions.  We use $e^i$ to denote the $i$-th coordinate vector with appropriate dimensions.  We use $I_n$ to denote the identity matrix of dimension $n\times n$.  We use $\mymathbb{0}_{m\times n}$ to denote the all-zero matrix of dimension $m\times n$.  We use $S_n(\mathbb{R})$ to denote the set of all $n\times n$ real symmetric matrices.  For any matrix $A$, we use $A_{ij}$ to denote its $(i,j)$-th element and $\mathrm{col}(A)$ to denote its column space.  We use $a\in A$ to mean that $a$ is a column of $A$ and define $\Delta_A := \max_{a\in A}\|a\|$.  For a non-zero matrix $A$, we define $\overline{A}:=A\slash\Delta_A$. For a square matrix $A$, we use $\mathrm{tr}(A)$ to denote its trace and $\mathrm{diag}(A)$ to denote the vector consisting of its diagonal elements.  For a vector $v\in\mathbb{R}^n$, we use $\mathrm{Diag}(v)$ to denote the $n\times n$ diagonal matrix whose diagonal elements are given by the components of $v$.  We use $\|\cdot\|$ to denote the Euclidean norm of a vector and the spectral norm of a matrix.  We use $\|\cdot\|_F$ to denote the Frobenius norm of a matrix.  We use $B(x;\Delta)$ to denote the closed Euclidean ball centered at $x$ with radius $\Delta$.  We use $[n]$ to denote the set $\{1,\ldots,n\}$. For any set $Y$ written in the form $Y=\{x^0,x^0+d^i:i\in [m]\}$, we define $D^Y := [d^1\cdots d^m]$.

\subsection{Generalized simplex gradient, generalized simplex Hessian, and quadratic generalized simplex derivative models}
    \begin{definition}
        Let $x^0\in\mathbb{R}^n$ and $S=[s^1\cdots s^p]\in\mathbb{R}^{n\times p}$.  The generalized simplex gradient (GSG) of $f$ at $x^0$ over $S$ is $$\nabla_sf(x^0;S) := \left(S^\top\right)^\dagger\delta_f(x^0;S)\in\mathbb{R}^n$$ where the $i$-th element of $\delta_f(x^0;S)$ is $f(x^0 + s^i) - f(x^0)$, $i\in [p]$.
    \end{definition}
    The sample set of the GSG of $f$ at $x^0$ over $S$ is $Y=\{x^0,x^0+s^i:i\in [p]\}$.
    
    \begin{definition}
        Let $x^0\in\mathbb{R}^n$ and $S=[s^1\cdots s^p]\in\mathbb{R}^{n\times p}$.  Let matrices $T_i=[t_i^1\cdots t_i^{q_i}]\in\mathbb{R}^{n\times q_i}$, $i\in [p]$.  Denote $T_{1:p}=\{T_1,\ldots,T_p\}$.  The generalized simplex Hessian (GSH) of $f$ at $x^0$ over $S$ and $T_{1:p}$ is $$\nabla_s^2f(x^0;S;T_{1:p}) := \left(S^\top\right)^\dagger\delta_f^2(x^0;S;T_{1:p})\in\mathbb{R}^{n\times n}$$ where the $i$-th row of $\delta_f^2(x^0;S;T_{1:p})$ is $(\nabla_sf(x^0+s^i;T_i)-\nabla_sf(x^0;T_i))^\top$, $i\in [p]$.
    \end{definition}
    \begin{remark}
        Following \cite{hare2024matrix}, if $T_1=\cdots=T_p=T\in\mathbb{R}^{n\times q}$, then, with a little misuse of notation, we denote $T_{1:p}=T$.  In this case, we also denote by $\delta_{\delta_f}(x^0;S;T)$ the matrix with the $i$-th row given by $(\delta_f(x^0+s^i;T) - \delta_f(x^0;T))^\top$.
        Equivalently, for $i\in [p]$ and $j\in [q]$,
        \begin{equation*}
            \left(\delta_{\delta_f}(x^0;S;T)\right)_{ij} := f(x^0+s^i+t^j)-f(x^0+s^i)-f(x^0+t^j)+f(x^0).
        \end{equation*}
        Notice that $\nabla_s^2f(x^0;S;T) = \left(S^\top\right)^\dagger\delta_{\delta_f}(x^0;S;T)T^\dagger$.
    \end{remark}
    \begin{remark}\label{rem:GSH(S,T)^T=GSH(T,S)}
        As we will see in Example~\ref{exp:asymmetricGSH}, the GSH is not necessarily symmetric.  In general, we have $\nabla_s^2f(x^0;S;T)^\top=\nabla_s^2f(x^0;T;S)$ \cite[Prop.~4.4]{hare2024matrix}.
    \end{remark}

    The sample set of the GSH of $f$ at $x^0$ over $S$ and $T_{1:p}$ is
    \begin{align}\label{sampleset_GSH}
    \begin{split}
        Y=\left\{x^0,x^0+s^i,x^0+t_i^j,x^0+s^i+t_i^j:i\in [p],j\in [q_i]\right\}.
    \end{split}
    \end{align}

    There is no unique way to construct quadratic models using the GSG and GSH.  In this paper, we use the definition given in \cite[Def.~7]{chen2026relationships}, which defines a general class of such models called the class of quadratic generalized simplex derivative models.
    \begin{definition}
        Let $x^0\in\mathbb{R}^n$, $Y$ be given by \eqref{sampleset_GSH}, and $d=x-x^0$.  The class of quadratic generalized simplex derivative models of $f$ at $x^0$ over $Y$ is
        \begin{align*}
            \left\{m_{\mathrm{QS}}^{x^0,Y}(x) = f(x^0)+\nabla_{\mathrm{QS}}f(x^0;Y)^\top d +\frac{1}{2}d^\top\nabla^2_{\mathrm{QS}}f(x^0;Y)d\right\}
        \end{align*}
        where $\nabla_{\mathrm{QS}}f(x^0;Y)$ and $\nabla^2_{\mathrm{QS}}f(x^0;Y)$ are linear combinations of GSG and GSH constructed using points in $Y$, chosen such that the union of the points used in both constructions equals $Y$.
    \end{definition}
    \begin{remark}
        As mentioned in Remark \ref{rem:GSH(S,T)^T=GSH(T,S)}, the GSH need not be symmetric, so the QS model is not necessarily an interpolation model.  One case where it is an interpolation model is given in Example \ref{exp:symdeltadeltafMinv_specialT} and analyzed in \cite[Sec.~5]{hare2024matrix}.  Further details and additional cases are discussed in Section \ref{sec:relationships}.
    \end{remark}

\subsection{Minimum norm models and minimum Frobenius norm models}
    Two common choices of quadratic interpolation models are the minimum norm models \cite{conn1996algorithm} and the minimum Frobenius norm models \cite{conn1998derivative}.  Here, we adopt the definition given in \cite[Defs.~2 and 3]{chen2026relationships}.

     \begin{definition}
        Let $x^0\in\mathbb{R}^n$, $Y=\{x^0,x^0+d^i:i\in [m]\}$, and $d=x-x^0$.  The class of minimum norm models of $f$ at $x^0$ over $Y$ is
        \begin{equation*}
            \left\{m_{\mathrm{MN}}^{x^0,Y}(x) = f(x^0) + \nabla_{\mathrm{MN}}f(x^0;Y)^\top d + \frac{1}{2}d^\top \nabla_{\mathrm{MN}}^2f(x^0;Y)d\right\}
        \end{equation*}
        where $(\nabla_{\mathrm{MN}}f(x^0;Y),\nabla_{\mathrm{MN}}^2f(x^0;Y))\in\mathcal{M}^{x^0,Y}_{\mathrm{MN}}$, the set of minimizers of
        \begin{align}
            \begin{split}\label{pro:MN}
                \min_{\substack{\alpha\in\mathbb{R}^n\\ H\in S_n(\mathbb{R})}}~~~&\frac{1}{2}\left\|\alpha\right\|^2+\frac{1}{2}\left\|H\right\|_F^2\\
                s.t.~~~&\left(d^i\right)^\top\alpha+\frac{1}{2}\left(d^i\right)^\top H\left(d^i\right) = f(x^0+d^i)-f(x^0),~~~i \in [m].
            \end{split}\tag{MN}
        \end{align}
    \end{definition}
    We say $Y$ is poised for MN interpolation if the solution to Problem~\eqref{pro:MN} exists and is unique.  Notice that the objective function is strongly convex and all constraints are linear with respect to $\alpha$ and $H$, the solution to Problem~\eqref{pro:MN} exists and is unique if and only if Problem~\eqref{pro:MN} is feasible, i.e., there exists a quadratic function that interpolates $f$ on $Y$.   

    \begin{definition}\label{def:MFN}
        Let $x^0\in\mathbb{R}^n$, $Y=\{x^0,x^0+d^i:i\in [m]\}$, and $d=x-x^0$.  The class of minimum Frobenius norm models of $f$ at $x^0$ over $Y$~is
        \begin{equation*}
            \left\{m_{\mathrm{MFN}}^{x^0,Y}(x) = f(x^0) + \nabla_{\mathrm{MFN}}f(x^0;Y)^\top d + \frac{1}{2}d^\top\nabla_{\mathrm{MFN}}^2f(x^0;Y)d\right\}
        \end{equation*}
        where $(\nabla_{\mathrm{MFN}}f(x^0;Y),\nabla_{\mathrm{MFN}}^2f(x^0;Y))\in\mathcal{M}^{x^0,Y}_{\mathrm{MFN}}$, the set of minimizers of
        \begin{align}
            \begin{split}\label{pro:MFN}
                \min_{\substack{\alpha\in\mathbb{R}^n\\ H\in S_n(\mathbb{R})}}~~~&\frac{1}{2}\left\|H\right\|_F^2\\
                s.t.~~~&\left(d^i\right)^\top\alpha+\frac{1}{2}\left(d^i\right)^\top H\left(d^i\right) = f(x^0+d^i)-f(x^0),~~~i\in [m].
            \end{split}\tag{MFN}
        \end{align}
    \end{definition}
    We say $Y$ is poised for MFN interpolation if the solution to Problem~\eqref{pro:MFN} exists and is unique.  An equivalent condition for the existence and uniqueness of Problem~\eqref{pro:MFN} can be found in \cite[Sec.~5.3]{conn2009introduction}.  As mentioned in \cite[Sec.~5.3]{conn2009introduction}, if $Y$ is poised for MFN interpolation, then it is poised for MN interpolation.

\subsection{Examples}\label{subsec:examples}
    In this subsection, we present examples of the MN, MFN, and QS models.  We begin with examples of the MN and MFN models. The following example shows that not all sets poised for MN interpolation are poised for MFN interpolation.
    \begin{example}\label{exp:poisedMNnotpoisedMFN}
        Consider $n=3$, $x^0=\mymathbb{0}$, $Y=\{\mymathbb{0},e^1,e^2,-e^1, -e^2\}\subseteq\mathbb{R}^3$, and $f(x)=\|x\|^2$.  We can compute that
        \begin{equation*}
            \mathcal{M}^{x^0,Y}_{\mathrm{MN}} = \left\{\left(\mymathbb{0}, \begin{bmatrix}2 & 0 & 0\\ 0 & 2 & 0\\ 0 & 0 & 0\end{bmatrix}\right)\right\},~~\mathcal{M}^{x^0,Y}_{\mathrm{MFN}} = \left\{\left(\begin{bmatrix}0\\ 0\\ r\end{bmatrix}, \begin{bmatrix}2 & 0 & 0\\ 0 & 2 & 0\\ 0 & 0 & 0\end{bmatrix}\right):r\in\mathbb{R}\right\}.
        \end{equation*}
        This implies that $Y$ is poised for MN interpolation but not poised for MFN interpolation, as the solution to Problem~\eqref{pro:MFN} is not unique.
    \end{example}

    We also notice that even if $Y$ is poised for both MN and MFN interpolation, the MN and MFN models may not be the same.
    \begin{example}\label{exp:MNmoreaccurate}
        Consider $n=2$, $x^0=\mymathbb{0}$, $Y=\{\mymathbb{0},e^1,e^2,2e^1,e^1+e^2\}\subseteq\mathbb{R}^2$, and $f(x)=\|x\|^2$.  We can compute that
        \begin{equation*}
            \mathcal{M}^{x^0,Y}_{\mathrm{MN}} = \left\{\left(\begin{bmatrix}
                0\\
                \frac{4}{5}
            \end{bmatrix}, \begin{bmatrix}
                2 & 0\\
                0 & \frac{2}{5}
            \end{bmatrix}\right)\right\}~~~\text{and}~~~\mathcal{M}^{x^0,Y}_{\mathrm{MFN}} = \left\{\left(\begin{bmatrix}
                0\\
                1
            \end{bmatrix}, \begin{bmatrix}
                2 & 0\\
                0 & 0
            \end{bmatrix}\right)\right\}.
        \end{equation*}
    \end{example}

    Next, we provide an example of the QS model illustrating that it may or may not coincide with the MN and MFN models.
    \begin{example}
        We use the same setting as Example \ref{exp:poisedMNnotpoisedMFN}.  One possible QS model can be constructed by letting
        \begin{align*}
            \nabla_{\mathrm{QS}}f(x^0;Y) &= \frac{1}{2}\left(\nabla_sf(x^0;[e^1~e^2] + \nabla_sf(x^0;[-e^1~-e^2])\right) = \mymathbb{0}\\
            \nabla^2_{\mathrm{QS}}f(x^0;Y) &= \nabla_s^2f(x^0;[e^1~e^2];\{[-e^1],[-e^2]\}) = \begin{bmatrix}2 & 0 & 0\\ 0 & 2 & 0\\ 0 & 0 & 0\end{bmatrix}.
        \end{align*}
        As seen in Example \ref{exp:poisedMNnotpoisedMFN}, this QS model coincides with the MN model and belongs to the class of MFN models of $f$ at $x^0$ over $Y$.  However, if we change $\nabla_{\mathrm{QS}}f(x^0;Y)$ to $\nabla_sf(x^0;[e^1])=e^1$, then the corresponding QS model no longer coincides with MN or MFN models.
    \end{example}

    Finally, we note that, unlike the MN and MFN models, the Hessian of the QS model need not be symmetric.  This asymmetry arises because the GSH itself may be non-symmetric, as illustrated in the following example.
    \begin{example}\label{exp:asymmetricGSH}
        Consider $n=2$, $x^0=\mymathbb{0}$, $S=[e^1~e^2]$, $T=[e^1]$, and $f(x)=(\mymathbb{1}^\top x)^2$.  Then, the GSH of $f$ at $x^0$ over $S$ and $T$ is
        \begin{equation*}
            \nabla_s^2f(x^0;S;T) = \begin{bmatrix}
                f(2e^1)-f(e^1)-f(e^1)+f(\mymathbb{0})\\
                f(e^1+e^2)-f(e^2)-f(e^1)+f(\mymathbb{0})\\
            \end{bmatrix}\left(e^1\right)^\top = \begin{bmatrix}
                2 & 0\\
                2 & 0
            \end{bmatrix}\notin S_2(\mathbb{R}).
        \end{equation*}
    \end{example}

    The examples in this subsection illustrate both distinctions and potential connections among the MN, MFN, and QS models. In this paper, we establish error bounds and further investigate the relationships among these models.

\section{Fully linear error bounds}\label{sec:fullylinearebs}
    In this section, we present and derive fully linear error bounds for the MN, MFN, and QS models.  We note that fully linear bounds of the MN and MFN models have previously been studied in \cite[Chap.~5]{conn2009introduction}, and the errors of the MN model projected onto an appropriate linear subspace are given in \cite[Thm.~5.12]{conn2008geometry}.  However, the fully linear bounds for the MN model provided in \cite[Chap.~5]{conn2009introduction} rely on the assumption that the model Hessians are uniformly bounded.  Our analysis demonstrates that this assumption is unnecessary.  Moreover, we establish the first fully linear error bounds for the QS model.

    Following \cite[Def.~9.1]{audet2017derivative} and \cite[Def.~6.1]{conn2009introduction}, we first define a class of fully linear models.
    \begin{definition}
        Suppose that $f\in\mathcal{C}^1$, $x^0\in\mathbb{R}^n$, and $\overline{\Delta}>0$.  We say that $\mathcal{M}_{\overline{\Delta}}=\{m_{\Delta}:\mathbb{R}^n\to\mathbb{R}\}_{\Delta\in (0,\overline{\Delta}]}$ is a class of fully linear models of $f$ at $x^0$ parameterized by $\Delta$ if there exists constants $\kappa^{ef}>0$ and $\kappa^{eg}>0$ independent of $x^0$ such that for all $\Delta\in (0,\overline{\Delta}]$ and $x\in B(x^0;\Delta)$,
        \begin{equation*}
            \left|f(x)-m_{\Delta}(x)\right| \le \kappa^{ef}\Delta^2~~~\text{and}~~~\left\|\nabla f(x)-\nabla m_{\Delta}(x)\right\| \le \kappa^{eg}\Delta.
        \end{equation*}
    \end{definition}

    In the following results, we may use the following assumptions.
    \begin{assumption}\label{ass:D^Y_full_rank}
        The sample set $Y=\{x^0,x^0+d^i:i\in [m]\}\subseteq \mathbb{R}^n$ satisfies that $D^Y$ has full row rank, i.e., $\mathrm{col}(D^Y)=\mathbb{R}^n$.
    \end{assumption}
    \begin{assumption}\label{ass:C1+}
        The objective $f\in\mathcal{C}^{1+}$ with $L_{\nabla f}$-Lipschitz gradient.
    \end{assumption}

    We note that Assumption~\ref{ass:D^Y_full_rank} is imposed primarily to simplify the notation and presentation of the analysis.  By applying the conversion framework developed in \cite{chen2026relationships}, all results stated under Assumption~\ref{ass:D^Y_full_rank} extend to the case where the columns of $D^Y$ span a linear subspace. In that setting, the analysis can be carried out in intrinsic coordinates on the subspace, after which the resulting bounds for the subspace model can be translated back to the full-space model on $Y$ via the conversion formula in \cite{chen2026relationships}.

    Assumption \ref{ass:C1+} is standard in error analysis for model-based DFO, as it provides a bound on the error of the first-order Taylor approximation.
    \begin{lemma}{(\cite[Lem.~9.4]{audet2017derivative})}
        Suppose that Assumption \ref{ass:C1+} holds.  Then, for all $x,d\in\mathbb{R}^n$, we have $|f(x+d)-f(x)-\nabla f(x)^\top d|\le\frac{L_{\nabla f}}{2}\|d\|^2$.
    \end{lemma}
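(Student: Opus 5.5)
The plan is to apply the fundamental theorem of calculus along the segment from $x$ to $x+d$, and then use the Lipschitz continuity of $\nabla f$ from Assumption~\ref{ass:C1+}. Fix $x,d\in\mathbb{R}^n$ and set $\phi(t)=f(x+td)$ for $t\in[0,1]$. Since $f\in\mathcal{C}^1$, the function $\phi$ is continuously differentiable with $\phi'(t)=\nabla f(x+td)^\top d$. This gives
\begin{equation*}
    f(x+d)-f(x) = \int_0^1 \nabla f(x+td)^\top d\,dt .
\end{equation*}
Subtracting $\nabla f(x)^\top d=\int_0^1\nabla f(x)^\top d\,dt$ from both sides yields
\begin{equation*}
    f(x+d)-f(x)-\nabla f(x)^\top d = \int_0^1 \left(\nabla f(x+td)-\nabla f(x)\right)^\top d\,dt .
\end{equation*}

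Next I take absolute values and move them inside the integral. I then apply the Cauchy--Schwarz inequality to the integrand, together with the Lipschitz bound $\|\nabla f(x+td)-\nabla f(x)\|\le L_{\nabla f}\,t\|d\|$. This gives
\begin{equation*}
    \left|f(x+d)-f(x)-\nabla f(x)^\top d\right| \le \int_0^1 L_{\nabla f}\, t\|d\|^2\,dt = \frac{L_{\nabla f}}{2}\|d\|^2,
\end{equation*}
which is the claimed bound.

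There is no real obstacle here. The only point needing care is the justification of the fundamental theorem of calculus step. It holds because $\phi'$ is continuous, and this follows from the continuity of $\nabla f$, which is implied by the Lipschitz assumption.
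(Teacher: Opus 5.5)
Your proof is correct and complete. The paper gives no proof of its own and simply cites \cite[Lem.~9.4]{audet2017derivative}, whose standard proof is this same argument: the fundamental theorem of calculus along the segment, followed by Cauchy--Schwarz and the Lipschitz bound (the paper itself uses the same integral representation in the proof of Theorem~\ref{thm:QSHessupperbd}).
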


    The next theorem, analogous to \cite[Thm.~5.4]{conn2009introduction}, provides a fully linear bound for general quadratic interpolation models. 
    \begin{theorem}\label{thm:QuadfullylinearConsts}
        Suppose that Assumptions~\ref{ass:D^Y_full_rank} and \ref{ass:C1+} hold. Let $m^{x^0,Y}(x)$ be any quadratic interpolation model of $f$ at $x^0$ over $Y$.  If $\|\nabla^2m^{x^0,Y}(x)\|\le \kappa^{mH}$ for some $\kappa^{mH}>0$ on a set $K\subseteq\mathbb{R}^n$, then for all $x^0\in K$, $m^{x^0,Y}(x)$ belongs to a class of fully linear models of $f$ at $x^0$ parameterized by $\Delta_{D^Y}$, with
        \begin{equation*}
            \kappa^{ef} := \frac{L_{\nabla f}+\kappa^{mH}}{2} \sqrt{n}\left\|\overline{D^Y}^{\dagger}\right\|_1 + \frac{L_{\nabla f}+\kappa^{mH}}{2},~~~\kappa^{eg} := 2\kappa^{ef} + 2\kappa^{mH}.
        \end{equation*}
    \end{theorem}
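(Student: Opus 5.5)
Write $m=m^{x^0,Y}$, $g=\nabla m(x^0)$, $H=\nabla^2 m$ (constant, with $\|H\|\le\kappa^{mH}$), and $\Delta=\Delta_{D^Y}$. The plan follows the classical argument of \cite[Thm.~5.4]{conn2009introduction}: first bound the gradient error at $x^0$ using the interpolation conditions and the pseudoinverse of $D^Y$, and then transfer this to all $x\in B(x^0;\Delta)$ via Taylor expansions.

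\textbf{Step 1 (gradient error at $x^0$).} Set $e:=g-\nabla f(x^0)$. For each $i\in[m]$, the interpolation condition $m(x^0+d^i)=f(x^0+d^i)$ together with the Lemma (\cite[Lem.~9.4]{audet2017derivative}) gives
\begin{equation*}
\left|(d^i)^\top e\right| = \left|f(x^0+d^i)-f(x^0)-\nabla f(x^0)^\top d^i-\tfrac12 (d^i)^\top H d^i\right| \le \frac{L_{\nabla f}+\kappa^{mH}}{2}\|d^i\|^2 \le \frac{L_{\nabla f}+\kappa^{mH}}{2}\Delta^2 .
\end{equation*}
Stack these as $(D^Y)^\top e = r$ with $\|r\|_\infty\le \frac{L_{\nabla f}+\kappa^{mH}}{2}\Delta^2$. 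Under Assumption~\ref{ass:D^Y_full_rank}, $(D^Y)^\top$ has full column rank, so $e=((D^Y)^\top)^\dagger r = ((D^Y)^\dagger)^\top r$. Dividing through by $\Delta$ replaces $D^Y$ with $\overline{D^Y}$. Then
\begin{equation*}
\|e\|\le\sqrt n\,\|e\|_\infty \le \sqrt n\,\bigl\|\overline{D^Y}^{\dagger}\bigr\|_1\,\frac{\|r\|_\infty}{\Delta},
\end{equation*}
using that the $\infty$-norm of a transpose equals the $1$-norm of the matrix. This yields
\begin{equation*}
\|e\|\le \frac{L_{\nabla f}+\kappa^{mH}}{2}\sqrt n\,\bigl\|\overline{D^Y}^\dagger\bigr\|_1\Delta \le \kappa^{ef}\Delta.
\end{equation*}
The bookkeeping of which induced norm appears, and how the scaling by $\Delta$ enters, is the step I expect to need the most care to match the stated constant exactly.

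\textbf{Step 2 (function error on the ball).} For $x=x^0+d$ with $\|d\|\le\Delta$, use $m(x^0)=f(x^0)$ to write
\begin{equation*}
f(x)-m(x) = \bigl[f(x)-f(x^0)-\nabla f(x^0)^\top d\bigr] - e^\top d - \tfrac12 d^\top H d.
\end{equation*}
Bounding the three terms by $\frac{L_{\nabla f}}{2}\Delta^2$, $\|e\|\Delta$ and $\frac{\kappa^{mH}}{2}\Delta^2$ respectively gives $|f(x)-m(x)|\le \kappa^{ef}\Delta^2$. This is exactly how the two summands of $\kappa^{ef}$ arise.

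\textbf{Step 3 (gradient error on the ball).} Write
\begin{equation*}
\nabla f(x)-\nabla m(x) = \bigl(\nabla f(x)-\nabla f(x^0)\bigr) - e - Hd,
\end{equation*}
which is bounded by $L_{\nabla f}\Delta+\|e\|+\kappa^{mH}\Delta$. Since $L_{\nabla f}\le 2\kappa^{ef}$ and $\|e\|\le\kappa^{ef}\Delta$, this is at most $(3\kappa^{ef}+\kappa^{mH})\Delta$, which is a slightly weaker constant than claimed.

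To get $2\kappa^{ef}+2\kappa^{mH}$ instead, I would avoid Lipschitz continuity here. Fix $x$ and bound $v^\top(\nabla f(x)-\nabla m(x))$ for a unit vector $v$. Compare the function errors at $x+\Delta v$ and $x$, combining the Lemma at $x$ with the quadratic structure of $m$. The Step 2 bound applied on a doubled region supplies the $2\kappa^{ef}$ term, and the curvature of $m$ supplies the $2\kappa^{mH}$ term. If the exact constant resists this approach, I would accept the simpler derivation above, since any bound of the form $\kappa\Delta$ with $\kappa$ independent of $x^0$ already establishes full linearity. Independence from $x^0$ holds throughout because all constants depend only on $L_{\nabla f}$, $\kappa^{mH}$, $n$ and $\overline{D^Y}$.
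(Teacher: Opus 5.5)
Your Steps 1 and 2 are correct and give exactly the stated $\kappa^{ef}$. They carry the same estimate as the paper, seen from the dual side. The paper writes $d=D^Yv$ with $v=(D^Y)^\dagger d$, bounds $|(\alpha-\nabla f(x^0))^\top d|=|r^\top v|\le\|r\|_\infty\|v\|_1$, and uses $\|v\|_1\le\sqrt n\|\overline{D^Y}^\dagger\|_1$. You instead bound $e=(\overline{D^Y}^\dagger)^\top r/\Delta$ directly. Since $e^\top d=r^\top v$, the two bounds give the same constant; you simply place the $\sqrt n$ on $e$ rather than on $d$. The real difference is downstream. The paper only bounds $|m(x)-f(x^0)-\nabla f(x^0)^\top d|$ and then cites \cite[Lem.~5.1(a)]{roberts2025introduction} to obtain both fully linear constants. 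You derive them by hand, which is more self-contained.

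The shortfall is Step 3. As written it yields $\kappa^{eg}=3\kappa^{ef}+\kappa^{mH}$, and this is never better than the stated $2\kappa^{ef}+2\kappa^{mH}$. To see this, note $1=\|\overline{D^Y}\,\overline{D^Y}^\dagger e^1\|\le\|\overline{D^Y}^\dagger e^1\|_1$ because the columns of $\overline{D^Y}$ have norm at most one. Hence $\|\overline{D^Y}^\dagger\|_1\ge1$, so $\kappa^{ef}\ge L_{\nabla f}+\kappa^{mH}\ge\kappa^{mH}$. The theorem fixes $\kappa^{eg}$, and the value is used later: $\kappa^{eg}_{\mathrm{MFN}}$ enters $\kappa^{mH}_{\mathrm{MN}}$ in Theorem~\ref{thm:MNHessupperbd}. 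So the fallback ``any $\kappa\Delta$ suffices'' does not prove the statement as given. Your sketched repair also fails: on $B(x^0;2\Delta)$ the $\Delta^2$-terms of Step~2 quadruple and the $\|e\|$-term doubles, so the constant gets worse, not better.

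The fix is just to keep the slack you already have. Step~2 uses the sharp Step~1 bound, but Step~3 replaces it by $\|e\|\le\kappa^{ef}\Delta$ and $L_{\nabla f}\le2\kappa^{ef}$. Set $c:=\frac{L_{\nabla f}+\kappa^{mH}}{2}\sqrt n\|\overline{D^Y}^\dagger\|_1$, so that $\|e\|\le c\Delta$ and $\kappa^{ef}=c+\frac{L_{\nabla f}+\kappa^{mH}}{2}$. Your own Lipschitz argument then gives
\[
\|\nabla f(x)-\nabla m(x)\|\le\bigl(L_{\nabla f}+\kappa^{mH}+c\bigr)\Delta=\Bigl(\kappa^{ef}+\tfrac{L_{\nabla f}+\kappa^{mH}}{2}\Bigr)\Delta\le2\kappa^{ef}\Delta\le\bigl(2\kappa^{ef}+2\kappa^{mH}\bigr)\Delta .
\]
With this one-line change the proof is complete, and its gradient constant is slightly sharper than the one stated.
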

    \begin{proof}
        Fix $x\in B(x^0;\Delta_{D^Y})$ and denote $d=x-x^0$.
        From Assumption~\ref{ass:D^Y_full_rank}, there exist $v\in\mathbb{R}^m$ such that $d = \sum_{i=1}^m v_i d^i$.
        This implies that $\Delta_{D^Y}\overline{D^Y} v = d$ is a consistent linear system.  Let $v$ be the minimum norm solution to this system, i.e., $v = \overline{D^Y}^{\dagger} d\slash\Delta_{D^Y}$.  Suppose that $m^{x^0,Y}(x)=f(x^0)+\alpha^\top d+\frac{1}{2}d^\top Hd$.  Since $m^{x^0,Y}(x)$ interpolates $f$ on $Y$, $\alpha^\top d^i=f(x^0+d^i)-f(x^0)-\frac{1}{2}(d^i)^\top Hd^i$ and so
        \begin{align*}
            &~~\left|m^{x^0,Y}(x)-f(x^0)-\nabla f(x^0)^\top d\right|\\
            &\le \left|\left(\alpha-\nabla f(x^0)\right)^\top d\right| + \frac{1}{2}\left|d^\top Hd\right| \le \left|\sum_{i=1}^mv_i\left(\alpha-\nabla f(x^0)\right)^\top d^i\right| + \frac{\kappa^{mH}}{2}\Delta_{D^Y}^2\\
            &\le \left|\sum_{i=1}^mv_i\left(f(x^0+d^i)-f(x^0)-\nabla f(x^0)^\top d^i\right)\right| + \frac{\kappa^{mH}}{2}\left(\left\|v\right\|_1+1\right)\Delta_{D^Y}^2\\
            &\le \left(\frac{L_{\nabla f}+\kappa^{mH}}{2}\left\|v\right\|_1+\frac{\kappa^{mH}}{2}\right)\Delta_{D^Y}^2.
        \end{align*}
        Since $\|d\|_1 \leq \sqrt{n} \|d\|$, we have $\|v\|_1 \le \|\overline{D^Y}^{\dagger}\|_1\|d\|_1\slash\Delta_{D^Y} \le \sqrt{n} \|\overline{D^Y}^{\dagger}\|_1$. The result follows from \cite[Lem.~5.1(a)]{roberts2025introduction}.
    \end{proof}
    \begin{remark}
        Theorem~\ref{thm:QuadfullylinearConsts} implies that any quadratic interpolation model with a bounded Hessian belongs to a class of fully linear models.  Therefore, to prove that a class of models is fully linear, it suffices to prove, under reasonable assumptions, that the corresponding Hessians are bounded. 
    \end{remark}

\subsection{Error bounds of MFN models}
    For the MFN model, fully linear error bounds can be found in, e.g., \cite[Thm.~5.15]{roberts2025introduction} and \cite[Thms.~5.4 and 5.7]{conn2009introduction}.  However, their definitions of MFN models differ slightly from Definition~\ref{def:MFN}. In particular, Definition~\ref{def:MFN} requires the model to interpolate $f$ at $x^0$, which is not imposed in \cite[Thm.~5.15]{roberts2025introduction}.  Moreover, \cite[Thm.~5.15]{roberts2025introduction} replaces the objective in Problem~\eqref{pro:MFN} with $\frac{1}{4}\|H\|_F$, and \cite[Thms.~5.4 and 5.7]{conn2009introduction} express the bounds in terms of a specific geometric measure of $Y$ known as the $\Lambda$-poisedness constant. Following the approach of \cite[Thm.~5.15]{roberts2025introduction}, we establish a fully linear error bound for the MFN model in Definition~\ref{def:MFN}.

    \begin{theorem}\label{thm:MFNHessupperbd}
        Suppose that Assumption \ref{ass:C1+} holds and $Y$ is poised for MFN interpolation.  Define $P\in\mathbb{R}^{m\times m}$ with $P_{ij}:=\frac{1}{4}((d^i\slash\Delta_{D^Y})^\top(d^j\slash\Delta_{D^Y}))^2$, and $F := \begin{bmatrix}
                P & \overline{D^Y}^\top \\
                \overline{D^Y} & \mymathbb{0}_{n\times n}
            \end{bmatrix}$.
        Then, $\nabla_{\mathrm{MFN}}^2f(x^0;Y)$ is unique and satisfies
        \begin{equation*}
            \left\|\nabla_{\mathrm{MFN}}^2f(x^0;Y)\right\| \le \left\|\nabla_{\mathrm{MFN}}^2f(x^0;Y)\right\|_F \le \kappa^{mH}_{\mathrm{MFN}}:= \frac{L_{\nabla f}}{4}m\left\|F^{-1}\right\|_{\infty}.
        \end{equation*}
    \end{theorem}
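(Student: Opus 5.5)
Our plan is to write down the KKT system of Problem~\eqref{pro:MFN}, solve it in closed form in terms of $F$, and then bound the resulting Hessian using the Taylor bound of Assumption~\ref{ass:C1+}.

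\textbf{KKT system and uniqueness.} Problem~\eqref{pro:MFN} is a convex quadratic program with linear equality constraints, so a feasible point is optimal if and only if there exist multipliers $\lambda\in\mathbb{R}^m$ with
\begin{equation*}
H=\frac{1}{2}\sum_{i=1}^m\lambda_i d^i(d^i)^\top,\qquad \sum_{i=1}^m\lambda_i d^i=\mymathbb{0}.
\end{equation*}
Substituting into the constraints gives $\sum_j \frac{1}{4}((d^i)^\top d^j)^2\lambda_j+(d^i)^\top\alpha=\delta_i$, where $\delta_i:=f(x^0+d^i)-f(x^0)$. We rescale by setting $\mu=\Delta_{D^Y}^4\lambda$ and $\beta=\Delta_{D^Y}\alpha$. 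The KKT system then becomes
\begin{equation*}
F\begin{bmatrix}\mu\\ \beta\end{bmatrix}=\begin{bmatrix}\delta\\ \mymathbb{0}\end{bmatrix}.
\end{equation*}
Poisedness for MFN interpolation (unique $(\alpha,H)$) should be equivalent to nonsingularity of $F$; this is the standard characterization in \cite[Sec.~5.3]{conn2009introduction}. In particular, $\overline{D^Y}$ must have full row rank, since otherwise $\alpha$ could be shifted in the null space. So $F^{-1}$ exists and $H$ is unique. Note that $\nabla^2_{\mathrm{MFN}}f$ is unique in any case, by strict convexity of $\|H\|_F^2$ over the feasible set.

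\textbf{Reducing to zero-mean data.} The key trick is to replace $\delta$ by $\tilde\delta_i:=\delta_i-\nabla f(x^0)^\top d^i$. The vector $(\mymathbb{0},\nabla f(x^0)\Delta_{D^Y})$ solves the system with right-hand side $(D^{Y\top}\nabla f(x^0),\mymathbb{0})$, so by linearity this replacement changes only $\beta$ and leaves $\mu$ unchanged. By the Taylor lemma, $|\tilde\delta_i|\le \frac{L_{\nabla f}}{2}\|d^i\|^2\le\frac{L_{\nabla f}}{2}\Delta_{D^Y}^2$. Therefore
\begin{equation*}
\|\mu\|_\infty\le \|F^{-1}\|_\infty\,\|\tilde\delta\|_\infty\le \frac{L_{\nabla f}}{2}\Delta_{D^Y}^2\|F^{-1}\|_\infty .
\end{equation*}

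\textbf{Bounding the Hessian.} In the scaled variables,
\begin{equation*}
H=\frac{1}{2}\Delta_{D^Y}^{-4}\sum_i\mu_i d^i(d^i)^\top=\frac{1}{2}\Delta_{D^Y}^{-2}\sum_i\mu_i\,\bar d^i(\bar d^i)^\top,
\end{equation*}
with $\|\bar d^i\|\le1$. The triangle inequality then gives
\begin{equation*}
\|H\|_F\le\frac{1}{2}\Delta_{D^Y}^{-2}\sum_i|\mu_i|\le \frac{m}{2}\Delta_{D^Y}^{-2}\|\mu\|_\infty\le\frac{L_{\nabla f}}{4}m\|F^{-1}\|_\infty .
\end{equation*}
The inequality $\|H\|\le\|H\|_F$ is standard.

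\textbf{Main obstacle.} The delicate step is the claim that poisedness implies $F$ is invertible, and in particular that $\overline{D^Y}$ has full row rank. If $F$ were singular, there would be $(\mu,\beta)\ne0$ in its kernel. Testing $P\mu+\overline{D^Y}^\top\beta=0$ against $\mu$, and using $\overline{D^Y}\mu=0$, gives $\mu^\top P\mu=0$. Since $\mu^\top P\mu=\frac{1}{4}\|\sum_i\mu_i\bar d^i\bar d^{i\top}\|_F^2$, this forces $\sum_i\mu_i\bar d^i\bar d^{i\top}=0$. Then $\overline{D^Y}^\top\beta=0$, so $(\alpha+\beta/\Delta_{D^Y},H)$ is another feasible point with the same objective value. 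Uniqueness then forces $\beta=0$. Finally, the argument has to rule out the case $\mu\ne0$, $\beta=0$; this only produces a non-unique multiplier, not a non-unique solution. For that case I would invoke the standard equivalence in \cite[Sec.~5.3]{conn2009introduction}, or note that the statement implicitly presumes $F^{-1}$ exists.
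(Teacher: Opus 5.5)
Your proposal is correct and is essentially the paper's proof. The paper also forms the KKT system (via \cite[Lem.~5.11]{roberts2025introduction}) and bounds the multipliers by subtracting the linear Taylor term (the proof of \cite[Lem.~5.14]{roberts2025introduction}, giving $|\lambda_i|\le\frac{L_{\nabla f}}{2}\|F^{-1}\|_\infty\Delta_{D^Y}^{-2}$, which is your bound on $\mu=\Delta_{D^Y}^4\lambda$); it then sums the rank-one terms exactly as you do.

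On your flagged obstacle, the paper simply asserts that poisedness makes the unscaled KKT matrix, and hence $F$, invertible. Your caution is justified: under the paper's $f$-dependent definition, uniqueness does not exclude kernel vectors with $\mu\neq\mymathbb{0}$ and $\beta=\mymathbb{0}$. For example, $n=1$, $x^0=0$, $Y=\{0,1,-1,2\}$, $f(x)=x^2$ gives a unique MFN solution but a singular $F$. So invertibility of $F$ must be read as part of the hypothesis, as in the $f$-independent notion of poisedness in \cite[Sec.~5.3]{conn2009introduction}, and your fallback is exactly what the paper relies on.
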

    \begin{proof}
        Similar to \cite[Lem.~5.11]{roberts2025introduction}, we can show that the solution to Problem~\eqref{pro:MFN}, denoted by $(\alpha^*,H^*)$, can be obtained by solving
        \begin{equation*}
            \underbrace{\begin{bmatrix}
                \Delta_{D^Y}^4 P & \left(D^Y\right)^\top\\
                D^Y & \mymathbb{0}_{n\times n}
            \end{bmatrix}}_{\widetilde{F}}\begin{bmatrix}
                \lambda\\
                \alpha^*
            \end{bmatrix} = \begin{bmatrix}
                \delta_f(x^0;D^Y)\\
                \mymathbb{0}
            \end{bmatrix}~~~\text{and}~~~H^*=\frac{1}{2}\sum_{i=1}^m\lambda_id^i(d^i)^\top,
        \end{equation*}
        where $\lambda\in\mathbb{R}^m$ is the Lagrange multiplier.  Since $Y$ is poised for MFN interpolation, $\widetilde{F}$ is invertible and so $F$ is invertible.  The proof of \cite[Lem.~5.14]{roberts2025introduction} gives $|\lambda_i| \le \frac{L_{\nabla f}}{2}\|F^{-1}\|_{\infty}\Delta_{D^Y}^{-2}$, so
        \begin{align*}
            \left\|H^*\right\| \le \left\|H^*\right\|_F \le \frac{1}{2}\sum_{i=1}^m\left|\lambda_i\right|\left\|d^i\right\|^2 \le \frac{1}{2}\sum_{i=1}^m\left|\lambda_i\right|\Delta_{D^Y}^2 \le \frac{L_{\nabla f}}{4}m\left\|F^{-1}\right\|_{\infty}.
        \end{align*}
    \end{proof}
    
    \begin{theorem}\label{thm:MFNfullylinear}
        Under the assumptions of Theorems ~\ref{thm:QuadfullylinearConsts} and \ref{thm:MFNHessupperbd}, $m_{\mathrm{MFN}}^{x^0,Y}(x)$ belongs to a class of fully linear models of $f$ at $x^0$ parameterized by $\Delta_{D^Y}$, with
        \begin{equation*}
            \kappa^{ef}_{\mathrm{MFN}} := \frac{L_{\nabla f}+\kappa^{mH}_{\mathrm{MFN}}}{2}\sqrt{n}\left\|\overline{D^Y}^{\dagger}\right\|_1 + \frac{L_{\nabla f}+\kappa^{mH}_{\mathrm{MFN}}}{2},~~~\kappa^{eg}_{\mathrm{MFN}} := 2\kappa^{ef}_{\mathrm{MFN}} + 2\kappa^{mH}_{\mathrm{MFN}}.
        \end{equation*}
    \end{theorem}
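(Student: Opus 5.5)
The argument is a direct combination of Theorem~\ref{thm:QuadfullylinearConsts} with Theorem~\ref{thm:MFNHessupperbd}. Theorem~\ref{thm:QuadfullylinearConsts} applies to any quadratic interpolation model whose Hessian is bounded by some $\kappa^{mH}$, so it suffices to verify three things. First, $m_{\mathrm{MFN}}^{x^0,Y}$ is a quadratic interpolation model of $f$ at $x^0$ over $Y$. Second, its Hessian is bounded by a constant independent of $x^0$. Third, the resulting constants are exactly $\kappa^{ef}_{\mathrm{MFN}}$ and $\kappa^{eg}_{\mathrm{MFN}}$.

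For the first point, we read interpolation off Definition~\ref{def:MFN}. At $d=\mymathbb{0}$ the model equals $f(x^0)$. At $d=d^i$, the constraints of Problem~\eqref{pro:MFN} give $m_{\mathrm{MFN}}^{x^0,Y}(x^0+d^i)=f(x^0+d^i)$. Since $Y$ is poised for MFN interpolation, the minimizer $(\alpha^*,H^*)$ is unique, so the model is well defined.

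For the second point, we use that the model Hessian is constant, $\nabla^2 m_{\mathrm{MFN}}^{x^0,Y}(x)=\nabla^2_{\mathrm{MFN}}f(x^0;Y)$ for every $x$. Theorem~\ref{thm:MFNHessupperbd} then gives $\|\nabla^2 m_{\mathrm{MFN}}^{x^0,Y}(x)\|\le\kappa^{mH}_{\mathrm{MFN}}$ for all $x\in\mathbb{R}^n$. We may therefore take $K=\mathbb{R}^n$ and $\kappa^{mH}=\kappa^{mH}_{\mathrm{MFN}}$ in Theorem~\ref{thm:QuadfullylinearConsts}. If $\kappa^{mH}_{\mathrm{MFN}}$ happens to be zero, we replace it by any positive number, or simply note that the proof of Theorem~\ref{thm:QuadfullylinearConsts} never uses strict positivity.

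For the third point, we check the independence requirement in the definition of a fully linear class. The constant $\kappa^{mH}_{\mathrm{MFN}}=\frac{L_{\nabla f}}{4}m\|F^{-1}\|_\infty$ depends only on $L_{\nabla f}$, on $m$, and on the scaled directions $\overline{D^Y}$ through $P$ and $F$. It does not depend on $x^0$ or on the scale $\Delta_{D^Y}$. The same holds for $\|\overline{D^Y}^\dagger\|_1$. Substituting into the formulas of Theorem~\ref{thm:QuadfullylinearConsts} produces exactly $\kappa^{ef}_{\mathrm{MFN}}$ and $\kappa^{eg}_{\mathrm{MFN}}$, which completes the proof.

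The only point needing care is this scale invariance: the constants must not blow up as $\Delta_{D^Y}\to 0$. This holds because both $F$ and $\overline{D^Y}$ are built from the normalized directions $d^i/\Delta_{D^Y}$, so $\|F^{-1}\|_\infty$ and $\|\overline{D^Y}^\dagger\|_1$ are unchanged when $D^Y$ is rescaled. The rest is bookkeeping.
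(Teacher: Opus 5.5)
Your proposal is correct and matches the paper's proof, which is simply the one-line application of Theorem~\ref{thm:QuadfullylinearConsts} with $\kappa^{mH}=\kappa^{mH}_{\mathrm{MFN}}$ taken from Theorem~\ref{thm:MFNHessupperbd}. Your additional checks fill in details the paper leaves implicit: that the MFN model interpolates on $Y$, that its Hessian is the constant matrix $\nabla^2_{\mathrm{MFN}}f(x^0;Y)$, and that the constants depend only on the normalized directions $\overline{D^Y}$ and not on $x^0$ or $\Delta_{D^Y}$.
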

    \begin{proof}
        Directly apply Theorem~\ref{thm:QuadfullylinearConsts} with $\kappa^{mH}=\kappa^{mH}_{\mathrm{MFN}}$.
    \end{proof}

\subsection{Error bounds of MN models}
    For the MN model, a fully linear bound follows from \cite[Thm.~5.4]{conn2009introduction} if the model Hessians are uniformly bounded. In this section, we show that this assumption is unnecessary, provided that the sample set is poised for MFN interpolation.
    \begin{remark}
        In practice, we often assume that $\{x\in\mathbb{R}^n:f(x)\le f(x^0)\}$ is compact.  This is a common choice for the set $K$ in the following theorems.
    \end{remark}
    \begin{theorem}\label{thm:MNHessupperbd}
        Suppose that Assumption \ref{ass:C1+} holds and $Y$ is poised for MFN interpolation.  Let $K\subseteq\mathbb{R}^n$ be a compact set and so there exists $\kappa^g>0$ such that
        $\|\nabla f(x)\|\le\kappa^g$ on $K$.  Then, $\nabla_{\mathrm{MN}}^2f(x^0;Y)$ is unique and satisfies
        \begin{equation*}
            \left\|\nabla_{\mathrm{MN}}^2f(x^0;Y)\right\| \le \kappa^{mH}_{\mathrm{MN}}:=\sqrt{\left(\kappa^g+\kappa^{eg}_{\mathrm{MFN}}\overline{\Delta}\right)^2 + \left(\kappa^{mH}_{\mathrm{MFN}}\right)^2}~~~\text{for all $x^0\in K$},
        \end{equation*}
        for any $\overline{\Delta}\geq \Delta_{D^Y}$. 
    \end{theorem}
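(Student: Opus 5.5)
The argument compares the MN minimizer with the MFN minimizer, which is a feasible point of Problem~\eqref{pro:MN}. Uniqueness comes first. Since $Y$ is poised for MFN interpolation, it is poised for MN interpolation, as noted after Definition~\ref{def:MFN}. Hence Problem~\eqref{pro:MN} is feasible, and strong convexity of its objective gives a unique minimizer $(\alpha_{\mathrm{MN}},H_{\mathrm{MN}})$. In particular $\nabla^2_{\mathrm{MN}}f(x^0;Y)$ is unique.

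Next we use optimality. Let $(\alpha_{\mathrm{MFN}},H_{\mathrm{MFN}})$ be the unique MFN solution. It satisfies exactly the same interpolation constraints, so it is feasible for Problem~\eqref{pro:MN}. Optimality of the MN solution then gives
\begin{equation*}
\left\|H_{\mathrm{MN}}\right\|^2\le\left\|H_{\mathrm{MN}}\right\|_F^2\le \left\|\alpha_{\mathrm{MN}}\right\|^2+\left\|H_{\mathrm{MN}}\right\|_F^2\le \left\|\alpha_{\mathrm{MFN}}\right\|^2+\left\|H_{\mathrm{MFN}}\right\|_F^2 .
\end{equation*}
It therefore suffices to bound $\|H_{\mathrm{MFN}}\|_F$ and $\|\alpha_{\mathrm{MFN}}\|$. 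The first bound, $\|H_{\mathrm{MFN}}\|_F\le\kappa^{mH}_{\mathrm{MFN}}$, is exactly Theorem~\ref{thm:MFNHessupperbd}.

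For $\alpha_{\mathrm{MFN}}$, note that $\alpha_{\mathrm{MFN}}=\nabla m_{\mathrm{MFN}}^{x^0,Y}(x^0)$. Theorem~\ref{thm:MFNfullylinear} gives the fully linear gradient bound at $x=x^0\in B(x^0;\Delta_{D^Y})$:
\begin{equation*}
\|\alpha_{\mathrm{MFN}}-\nabla f(x^0)\|\le \kappa^{eg}_{\mathrm{MFN}}\Delta_{D^Y}\le\kappa^{eg}_{\mathrm{MFN}}\overline{\Delta}.
\end{equation*}
Adding $\|\nabla f(x^0)\|\le\kappa^g$ for $x^0\in K$ yields $\|\alpha_{\mathrm{MFN}}\|\le \kappa^g+\kappa^{eg}_{\mathrm{MFN}}\overline{\Delta}$. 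Substituting both bounds into the optimality inequality and taking square roots gives the claimed $\kappa^{mH}_{\mathrm{MN}}$.

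The main obstacle is applying Theorem~\ref{thm:MFNfullylinear}, which is stated under Assumption~\ref{ass:D^Y_full_rank}. That assumption is not listed here. However, MFN poisedness forces $D^Y$ to have full row rank: otherwise one could add to $\alpha$ any vector orthogonal to $\mathrm{col}(D^Y)$, as in Example~\ref{exp:poisedMNnotpoisedMFN}, and uniqueness would fail. So Assumption~\ref{ass:D^Y_full_rank} holds automatically. The set $K$ in Theorem~\ref{thm:QuadfullylinearConsts} can then be taken to be all of $\mathbb{R}^n$ with $\kappa^{mH}=\kappa^{mH}_{\mathrm{MFN}}$. Alternatively, one can bypass Theorem~\ref{thm:MFNfullylinear} and use the gradient-error part of \cite[Lem.~5.1(a)]{roberts2025introduction} directly.
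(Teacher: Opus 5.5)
Your proposal is correct and follows the paper's proof essentially step for step. The MFN solution is feasible for Problem~\eqref{pro:MN}, which gives $\|H_{\mathrm{MN}}\|^2\le\|\alpha_{\mathrm{MFN}}\|^2+\|H_{\mathrm{MFN}}\|_F^2$; you then bound $\|H_{\mathrm{MFN}}\|_F$ by Theorem~\ref{thm:MFNHessupperbd} and $\|\alpha_{\mathrm{MFN}}\|$ by the fully linear gradient bound at $x^0$ plus $\kappa^g$. Your observation that MFN poisedness forces $D^Y$ to have full row rank, so that Theorem~\ref{thm:MFNfullylinear} genuinely applies, usefully makes explicit a step the paper's proof uses only implicitly.
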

    \begin{proof}
        Since  $(\nabla_{\mathrm{MFN}}f(x^0;Y),\nabla_{\mathrm{MFN}}^2f(x^0;Y))$ is feasible for Problem~\eqref{pro:MN},
        \begin{align*}
            \left\|\nabla_{\mathrm{MN}}^2f(x^0;Y)\right\|^2 &\le \left\|\nabla_{\mathrm{MN}}^2f(x^0;Y)\right\|_F^2 \le \left\|\nabla_{\mathrm{MN}}f(x^0;Y)\right\|^2+\left\|\nabla_{\mathrm{MN}}^2f(x^0;Y)\right\|_F^2\\ &\le \left\|\nabla_{\mathrm{MFN}}f(x^0;Y)\right\|^2+\left\|\nabla_{\mathrm{MFN}}^2f(x^0;Y)\right\|_F^2.
        \end{align*}
        Using the triangle inequality and the fact that $m_{\mathrm{MFN}}^{x^0,Y}(x)$ is fully linear, we get $\|\nabla_{\mathrm{MFN}}f(x^0;Y)\| \le \|\nabla f(x^0)\| + \kappa^{eg}_{\mathrm{MFN}}\Delta_{D^Y}$.  The proof is complete by combining the inequalities above and noticing that $\|\nabla f(x)\|\le\kappa^g$ on $K$.
    \end{proof}
    
    \begin{theorem}\label{thm:MNfullylinear}
        Under the assumptions of Theorems \ref{thm:QuadfullylinearConsts} and \ref{thm:MNHessupperbd}, for all $x^0\in K$, $m_{\mathrm{MN}}^{x^0,Y}(x)$ belongs to a class of fully linear models of $f$ at $x^0$ parameterized by $\Delta_{D^Y}$, with
        \begin{equation*}
            \kappa^{ef}_{\mathrm{MN}} := \frac{L_{\nabla f}+\kappa^{mH}_{\mathrm{MN}}}{2} \sqrt{n} \left\|\overline{D^Y}^{\dagger}\right\|_1 + \frac{L_{\nabla f}+\kappa^{mH}_{\mathrm{MN}}}{2},~~~\kappa^{eg}_{\mathrm{MN}} := 2\kappa^{ef}_{\mathrm{MN}} + 2\kappa^{mH}_{\mathrm{MN}}.
        \end{equation*}
    \end{theorem}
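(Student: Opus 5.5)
The plan is to apply Theorem~\ref{thm:QuadfullylinearConsts} directly with $\kappa^{mH}=\kappa^{mH}_{\mathrm{MN}}$, following the same pattern as the proof of Theorem~\ref{thm:MFNfullylinear}. To do so we must check the two hypotheses of Theorem~\ref{thm:QuadfullylinearConsts}.

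The first hypothesis is that $m_{\mathrm{MN}}^{x^0,Y}$ is a quadratic interpolation model of $f$ at $x^0$ over $Y$. Since $Y$ is poised for MFN interpolation, it is poised for MN interpolation, so Problem~\eqref{pro:MN} has a unique solution. Its constraints are exactly the interpolation conditions at the points $x^0+d^i$. The model also interpolates at $x^0$ by construction, because its constant term is $f(x^0)$.

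The second hypothesis is a uniform bound on the model Hessian over the set $K$. The model is quadratic, so $\nabla^2 m_{\mathrm{MN}}^{x^0,Y}(x)=\nabla^2_{\mathrm{MN}}f(x^0;Y)$ is constant in $x$. Theorem~\ref{thm:MNHessupperbd} bounds this Hessian by $\kappa^{mH}_{\mathrm{MN}}$ for every $x^0\in K$.

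The only subtle point is that this bound must not depend on $x^0$, and the fully linear constants $\kappa^{ef}$ and $\kappa^{eg}$ must likewise be independent of $x^0$. The constant $\kappa^{mH}_{\mathrm{MN}}$ involves $\kappa^g$ (fixed by $K$), $\overline{\Delta}$, and $\kappa^{mH}_{\mathrm{MFN}}$ and $\kappa^{eg}_{\mathrm{MFN}}$. The latter two depend only on the shape of the scaled sample directions $\overline{D^Y}$, through $F$ and $\overline{D^Y}^{\dagger}$, and on $L_{\nabla f}$, not on $x^0$. Hence the constant is uniform over $x^0\in K$.

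Substituting $\kappa^{mH}_{\mathrm{MN}}$ into the formulas for $\kappa^{ef}$ and $\kappa^{eg}$ in Theorem~\ref{thm:QuadfullylinearConsts} gives exactly the stated constants $\kappa^{ef}_{\mathrm{MN}}$ and $\kappa^{eg}_{\mathrm{MN}}$, which completes the argument.
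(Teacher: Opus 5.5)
Your proposal is correct and matches the paper's proof, which applies Theorem~\ref{thm:QuadfullylinearConsts} directly with $\kappa^{mH}=\kappa^{mH}_{\mathrm{MN}}$. Your extra checks make explicit what the paper leaves implicit: that the MN model interpolates $f$ on $Y$, and that $\kappa^{mH}_{\mathrm{MN}}$ is a uniform Hessian bound independent of $x^0\in K$.
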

    \begin{proof}
        Directly apply Theorem~\ref{thm:QuadfullylinearConsts} with $\kappa^{mH}=\kappa^{mH}_{\mathrm{MN}}$.
    \end{proof}

\subsection{Error bounds of QS models}
    In this subsection, we derive a fully linear error bound for the QS model in the case where it is an interpolation model.   To our knowledge, this is the first fully linear bound for the QS model. 

    \begin{theorem}\label{thm:QSHessupperbd}
        Suppose that Assumption~\ref{ass:C1+} holds and $\nabla^2_{\mathrm{QS}}f(x^0;Y)$ has the form $\nabla^2_{\mathrm{QS}}f(x^0;Y)=\sum_{k=1}^l\beta_k\nabla_s^2f(x^0;S^k;T^k_{1:p^k})$, where $l\ge 1$, $S^k\in\mathbb{R}^{n\times p^k}$, and $T^k_{1:p^k}=\{T_1^k,\ldots,T_{p^k}^k\}$ with each $T_i^k\in\mathbb{R}^{n\times q_i^k}$. Then,
        \begin{equation*}
            \left\|\nabla^2_{\mathrm{QS}}f(x^0;Y)\right\| \le \kappa^{mH}_{\mathrm{QS}}:=L_{\nabla f}\sum_{k=1}^l\left|\beta_k\right|\left\|\overline{S^k}^\dagger\right\|\sqrt{\sum_{i=1}^{p^k} q_i^k\left\|\overline{T_i^k}^\dagger\right\|^2}.
        \end{equation*}
    \end{theorem}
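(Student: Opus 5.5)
By the triangle inequality it is enough to prove, for a single GSH with $S\in\mathbb{R}^{n\times p}$ and $T_{1:p}=\{T_1,\ldots,T_p\}$, the bound
\begin{equation*}
\left\|\nabla_s^2f(x^0;S;T_{1:p})\right\| \le L_{\nabla f}\left\|\overline{S}^\dagger\right\|\sqrt{\sum_{i=1}^p q_i\left\|\overline{T_i}^\dagger\right\|^2}.
\end{equation*}
Multiplying each such bound by $|\beta_k|$ and summing over $k$ then gives $\kappa^{mH}_{\mathrm{QS}}$.

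The first step is to factor out $S$. By definition, $\nabla_s^2f(x^0;S;T_{1:p})=(S^\top)^\dagger\delta_f^2(x^0;S;T_{1:p})$. Since $(S^\top)^\dagger=(S^\dagger)^\top$, we have $\|(S^\top)^\dagger\|=\|S^\dagger\|=\|\overline{S}^\dagger\|/\Delta_S$, using $\overline{S}=S/\Delta_S$. It therefore suffices to show
\begin{equation*}
\left\|\delta_f^2\right\|\le\left\|\delta_f^2\right\|_F\le L_{\nabla f}\Delta_S\sqrt{\sum_{i} q_i\left\|\overline{T_i}^\dagger\right\|^2}.
\end{equation*}
The Frobenius norm is the squared-row-norm sum, so we need, for each row $i$,
\begin{equation*}
\left\|\nabla_sf(x^0+s^i;T_i)-\nabla_sf(x^0;T_i)\right\|\le L_{\nabla f}\Delta_S\sqrt{q_i}\,\left\|\overline{T_i}^\dagger\right\|.
\end{equation*}

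The second step, which I expect to be the main obstacle, is this row bound. The row equals $(T_i^\top)^\dagger w$, where
\begin{equation*}
w_j = f(x^0+s^i+t_i^j)-f(x^0+s^i)-f(x^0+t_i^j)+f(x^0).
\end{equation*}
The naive route expands both differences by Taylor's theorem, giving $|w_j|\le(\nabla f(x^0+s^i)-\nabla f(x^0))^\top t_i^j+L_{\nabla f}\|t_i^j\|^2$. That produces an unwanted $\Delta_{T_i}^2$ term which does not cancel against $\|T_i^\dagger\|$. Instead I would write $w_j$ as an integral:
\begin{equation*}
w_j=\int_0^1\left(\nabla f(x^0+s^i+\tau t_i^j)-\nabla f(x^0+\tau t_i^j)\right)^\top t_i^j\,d\tau .
\end{equation*}
Lipschitz continuity of $\nabla f$ then gives $|w_j|\le L_{\nabla f}\|s^i\|\|t_i^j\|\le L_{\nabla f}\Delta_S\Delta_{T_i}$. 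Hence $\|w\|\le\sqrt{q_i}\,L_{\nabla f}\Delta_S\Delta_{T_i}$.

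Finally, $\|(T_i^\top)^\dagger\|=\|\overline{T_i}^\dagger\|/\Delta_{T_i}$, so the factor $\Delta_{T_i}$ cancels and the row bound holds. Squaring, summing over $i$, and taking square roots gives the Frobenius bound. Combining it with the bound $\|(S^\top)^\dagger\|=\|\overline{S}^\dagger\|/\Delta_S$ from the first step cancels $\Delta_S$ and yields the single-GSH estimate. No symmetry or interpolation property is needed, only Assumption~\ref{ass:C1+}.
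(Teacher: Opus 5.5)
Your proof is correct and follows the paper's argument essentially step for step. Both reduce to a single GSH via the triangle inequality, bound each entry of $\delta_f(x^0+s^i;T_i)-\delta_f(x^0;T_i)$ by $L_{\nabla f}\Delta_S\Delta_{T_i}$ using the same integral representation, and then bound the rows and the Frobenius norm so that $\Delta_{T_i}$ and $\Delta_S$ cancel against $\|T_i^\dagger\|$ and $\|S^\dagger\|$.
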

    \begin{proof}
        It suffices to show $\|\nabla^2_sf(x^0;S;T_{1:p})\| \le L_{\nabla f}\|\overline{S}^\dagger\|\sqrt{\sum_{i=1}^p q_i\|\overline{T_i}^\dagger\|^2}$ for any $S\in\mathbb{R}^{n\times p}$ and $T_i\in\mathbb{R}^{n\times q_i}$, $i\in[p]$, since the result then follows from the triangle inequality.  The $j$-th element of $\delta_f(x^0+s^i;T_i)-\delta_f(x^0;T_i)$ satisfies
        \begin{align*}
            &~~\left|f(x^0+s^i+t_i^j)-f(x^0+s^i)-f(x^0+t_i^j)+f(x^0)\right|\\
            &= \left|\int_0^1\left(\nabla f(x^0+s^i+\tau t_i^j)-\nabla f(x^0+\tau t_i^j)\right)^\top t_i^j~\mathrm{d}\tau\right| \le L_{\nabla f}\Delta_S\Delta_{T_i}.
        \end{align*}
        We have the $i$-th row of $\delta_f^2(x^0;S;T_{1:p})$ satisfies
        \begin{align*}
            &~~\left\|\nabla_sf(x^0+s^i;T_i)-\nabla_sf(x^0;T_i)\right\| = \left\|\left(T_i^\top\right)^\dagger\left(\delta_f(x^0+s^i;T_i)-\delta_f(x^0;T_i)\right)\right\|\\
            &\le \sqrt{q_i}L_{\nabla f}\Delta_S\Delta_{T_i}\left\|T_i^\dagger\right\| = \sqrt{q_i}L_{\nabla f}\left\|\overline{T_i}^\dagger\right\|\Delta_S.
        \end{align*}
        The result follows from
        \begin{equation*}
            \left\|\nabla^2_sf(x^0;S;T_{1:p})\right\| \le \left\|S^\dagger\right\|\left\|\delta_f^2(x^0;S;T_{1:p})\right\|_F \le \left\|S^\dagger\right\|L_{\nabla f}\Delta_S\sqrt{\sum_{i=1}^p q_i\left\|\overline{T_i}^\dagger\right\|^2}.
        \end{equation*}
    \end{proof}
    \begin{theorem}\label{thm:QSfullylinear}
        Suppose that the assumptions of Theorems~\ref{thm:QuadfullylinearConsts} and \ref{thm:QSHessupperbd} hold.  If $m_{\mathrm{QS}}^{x^0,Y}(x)$ interpolates $f$ on $Y$, then for all $x^0\in\mathbb{R}^n$, $m_{\mathrm{QS}}^{x^0,Y}(x)$ belongs to a class of fully linear models of $f$ at $x^0$ parameterized by $\Delta_{D^Y}$, with
        \begin{equation*}
            \kappa^{ef}_{\mathrm{QS}} := \frac{L_{\nabla f}+\kappa^{mH}_{\mathrm{QS}}}{2} \sqrt{n} \left\|\overline{D^Y}^{\dagger}\right\|_1 + \frac{L_{\nabla f}+\kappa^{mH}_{\mathrm{QS}}}{2},~~~\kappa^{eg}_{\mathrm{QS}} := 2\kappa^{ef}_{\mathrm{QS}} + 2\kappa^{mH}_{\mathrm{QS}}.
        \end{equation*}
    \end{theorem}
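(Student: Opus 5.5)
The plan is to obtain the result as a direct corollary of Theorem~\ref{thm:QuadfullylinearConsts}, with Theorem~\ref{thm:QSHessupperbd} supplying the Hessian bound, in the same way as Theorems~\ref{thm:MFNfullylinear} and \ref{thm:MNfullylinear}. Theorem~\ref{thm:QuadfullylinearConsts} has two hypotheses: that $m^{x^0,Y}$ is a quadratic interpolation model of $f$ on $Y$, and that its Hessian is uniformly bounded by some $\kappa^{mH}$ on a set $K$ containing the base points under consideration. The first is exactly the interpolation hypothesis in the present statement. Assumptions~\ref{ass:D^Y_full_rank} and \ref{ass:C1+} are included in the assumptions of Theorem~\ref{thm:QuadfullylinearConsts}, which we are allowed to use.

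For the Hessian bound, Theorem~\ref{thm:QSHessupperbd} gives $\|\nabla^2_{\mathrm{QS}}f(x^0;Y)\|\le\kappa^{mH}_{\mathrm{QS}}$. The key observation is that $\kappa^{mH}_{\mathrm{QS}}$ depends only on $L_{\nabla f}$, the coefficients $\beta_k$, the counts $q_i^k$, and the normalized direction matrices $\overline{S^k}$, $\overline{T_i^k}$. It does not depend on $x^0$, because the bound in the proof of Theorem~\ref{thm:QSHessupperbd} comes from the global Lipschitz constant of $\nabla f$. Since the model is quadratic, its Hessian is constant in $x$, so $\|\nabla^2 m_{\mathrm{QS}}^{x^0,Y}(x)\|\le\kappa^{mH}_{\mathrm{QS}}$ for every $x$. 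We may therefore take $K=\mathbb{R}^n$. This is why the conclusion holds for all $x^0\in\mathbb{R}^n$, and no compactness or bound on $\|\nabla f\|$ is needed, unlike the MN case.

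Applying Theorem~\ref{thm:QuadfullylinearConsts} with $\kappa^{mH}=\kappa^{mH}_{\mathrm{QS}}$ then yields full linearity with respect to $\Delta_{D^Y}$, with constants exactly $\kappa^{ef}_{\mathrm{QS}}$ and $\kappa^{eg}_{\mathrm{QS}}$ as stated.

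The only step needing care is the interface with Theorem~\ref{thm:QuadfullylinearConsts}. The QS Hessian may be nonsymmetric, so the model's true Hessian is the symmetric part $\tfrac12\bigl(\nabla^2_{\mathrm{QS}}f+\nabla^2_{\mathrm{QS}}f^\top\bigr)$. The proof of Theorem~\ref{thm:QuadfullylinearConsts} uses only $|d^\top H d|\le\kappa^{mH}\|d\|^2$ and a bound on the model Hessian for the gradient step. The symmetric part has spectral norm at most $\|\nabla^2_{\mathrm{QS}}f\|$, so both uses remain valid with the same $\kappa^{mH}_{\mathrm{QS}}$, and the argument goes through unchanged.
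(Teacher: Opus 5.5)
Your proposal is correct and is the paper's argument: the paper proves this by directly applying Theorem~\ref{thm:QuadfullylinearConsts} with $\kappa^{mH}=\kappa^{mH}_{\mathrm{QS}}$ from Theorem~\ref{thm:QSHessupperbd}. Your further points are valid details the paper leaves implicit. Since $\kappa^{mH}_{\mathrm{QS}}$ does not depend on $x^0$, one may take $K=\mathbb{R}^n$. Also, replacing a possibly nonsymmetric QS Hessian by its symmetric part does not increase the spectral norm.
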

    \begin{proof}
        Directly apply Theorem~\ref{thm:QuadfullylinearConsts} with $\kappa^{mH}=\kappa^{mH}_{\mathrm{QS}}$.
    \end{proof}

\section{Directional Hessian error bounds with structured $Y$}\label{sec:directionalHessebs}
    In this section, we derive directional Hessian error bounds for the Hessian approximations arising from the MN, MFN, and QS models, under a mild structural assumption on $Y$.  Our results show that the directional errors range from $\mathcal{O}(1)$ to $\mathcal{O}(\Delta_{D^Y})$, depending on the direction of interest.  Notably, they achieve fully quadratic accuracy along sample directions. This highlights that all three models achieve significantly higher accuracy than general fully linear models in a directional sense.  To our knowledge, these are the first directional error bounds established for these Hessian approximations.

    We may use the following assumptions.
    \begin{assumption}\label{ass:structuredY}
        The set $Y=\{x^0,x^0+d^i,x^0-d^i:i\in [p]\}$.
    \end{assumption}
    \begin{assumption}\label{ass:fC2+}
        The objective $f\in\mathcal{C}^{2+}$ with $L_{\nabla^2 f}$-Lipschitz Hessian. 
    \end{assumption}
    \begin{assumption}\label{ass:MNfeasible}
        The Problem~\eqref{pro:MN} is feasible.
    \end{assumption}

    Notice that Assumption~\ref{ass:MNfeasible} guarantees the existence of a solution to Problem~\eqref{pro:MFN}.  However, the solution need not be unique.  Indeed, as shown in \cite[Rem.~2 and Exp.~1]{chen2026relationships}, in this case, $\nabla^2_{\mathrm{MFN}}f(x^0;Y)$ is always unique, whereas $\nabla_{\mathrm{MFN}}f(x^0;Y)$ may not be unique.

    The following two lemmas present basic properties of $\mathcal{C}^{2+}$ functions and will be used multiple times in the proofs in this subsection.
    \begin{lemma}
        Suppose that Assumption~\ref{ass:fC2+} holds.  For $x^0,u,v,w^1,w^2\in\mathbb{R}^n$, define $F(u,v,w^1,w^2) := \int_{0}^{1} (1-t) u^\top \left(\nabla^2 f(x^0+tw^1)-\nabla^2 f(x^0+tw^2)\right) v~\mathrm{d}t$.  Then, $|F(u,v,w^1,w^2)| \le \frac{L_{\nabla^2 f}}{6}\|u\|\|v\|\|w^1-w^2\|$.
    \end{lemma}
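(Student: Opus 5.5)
The plan is to bound the integrand pointwise and then integrate, using only the Lipschitz continuity of $\nabla^2 f$ from Assumption~\ref{ass:fC2+}.

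\emph{Pointwise bound.} For each $t\in[0,1]$, apply the Cauchy--Schwarz inequality together with the definition of the spectral norm:
\begin{equation*}
\left|u^\top\left(\nabla^2 f(x^0+tw^1)-\nabla^2 f(x^0+tw^2)\right)v\right| \le \|u\|\,\|v\|\,\left\|\nabla^2 f(x^0+tw^1)-\nabla^2 f(x^0+tw^2)\right\|.
\end{equation*}
Assumption~\ref{ass:fC2+} then gives
\begin{equation*}
\left\|\nabla^2 f(x^0+tw^1)-\nabla^2 f(x^0+tw^2)\right\| \le L_{\nabla^2 f}\,t\,\|w^1-w^2\|,
\end{equation*}
since the two evaluation points differ by exactly $t(w^1-w^2)$.

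\emph{Integration.} Because $1-t\ge 0$ on $[0,1]$, moving the absolute value inside the integral is valid. This yields
\begin{equation*}
|F(u,v,w^1,w^2)| \le L_{\nabla^2 f}\,\|u\|\,\|v\|\,\|w^1-w^2\|\int_0^1 (1-t)\,t~\mathrm{d}t.
\end{equation*}
The remaining integral is
\begin{equation*}
\int_0^1 (t-t^2)~\mathrm{d}t = \tfrac12-\tfrac13 = \tfrac16,
\end{equation*}
which gives the stated constant $\frac{L_{\nabla^2 f}}{6}$.

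No step is a genuine obstacle. The only points to check are the measurability and continuity of the integrand, which follow from the continuity of $\nabla^2 f$, and that the weight $t$ coming from the Lipschitz estimate combines with $(1-t)$ to produce exactly $1/6$.
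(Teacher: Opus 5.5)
Your proof is correct and follows the paper's argument: bound the integrand pointwise by $L_{\nabla^2 f}\,t\,\|u\|\|v\|\|w^1-w^2\|$ using Cauchy--Schwarz and the Lipschitz Hessian, then use $\int_0^1(1-t)t\,\mathrm{d}t=\tfrac16$. One small wording fix: $|\int g|\le\int|g|$ always holds; the sign of $1-t$ is only what allows you to write $|(1-t)g(t)|=(1-t)|g(t)|$.
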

    \begin{proof}
        By the triangle inequality and Assumption~\ref{ass:fC2+}, we obtain
        \begin{equation*}
            \left|F(u,v,w^1,w^2)\right|\le L_{\nabla^2 f}\|u\|\|v\|\left\|w^1-w^2\right\|\int_{0}^{1} (1-t)t~\mathrm{d}t,
        \end{equation*}
        and the result follows.
    \end{proof}
    \begin{lemma}\label{lem:C2+GivesO(D3)}
        Suppose that Assumption~\ref{ass:fC2+} holds.  Let $\Delta>0$.  Then, for any $x^0\in\mathbb{R}^n$, $u,v\in B(\mymathbb{0};\Delta)\subseteq\mathbb{R}^n$,
        \begin{align*}
            &~~E(u,v):=\\
            &\left|2f(x^0+\frac{1}{2}u+\frac{1}{2}v)+2f(x^0-\frac{1}{2}u-\frac{1}{2}v)-\frac{1}{2}f(x^0+u)-\frac{1}{2}f(x^0+v)\right.\\
            &\hspace{1.6cm}\left.-\frac{1}{2}f(x^0-u)-\frac{1}{2}f(x^0-v)-2f(x^0)-u^\top\nabla^2f(x^0)v\right| \le \frac{2L_{\nabla^2 f}}{3}\Delta^3.
        \end{align*}
        Moreover,
        \begin{equation*}
            E(u,u) = \left|f(x^0+u)+f(x^0-u)-2f(x^0)-u^\top\nabla^2f(x^0)u\right| \le \frac{L_{\nabla^2 f}}{3}\Delta^3.
        \end{equation*}
    \end{lemma}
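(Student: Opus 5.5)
The plan is to reduce everything to second-order Taylor expansions with integral remainder and then group the terms so that each group is exactly one of the quantities $F(\cdot,\cdot,\cdot,\cdot)$ bounded by the preceding lemma. For any $w\in\mathbb{R}^n$, Taylor's theorem with integral remainder gives
\begin{equation*}
f(x^0+w)=f(x^0)+\nabla f(x^0)^\top w+\int_0^1(1-t)\,w^\top\nabla^2 f(x^0+tw)\,w~\mathrm{d}t .
\end{equation*}
Define $g(w):=f(x^0+w)+f(x^0-w)-2f(x^0)$. Adding the expansions at $w$ and $-w$ cancels the gradient terms, so
\begin{equation*}
g(w)=\int_0^1(1-t)\,w^\top\bigl(\nabla^2 f(x^0+tw)+\nabla^2 f(x^0-tw)\bigr)w~\mathrm{d}t .
\end{equation*}
Since $\int_0^1(1-t)\,\mathrm{d}t=\tfrac12$, we can write $w^\top\nabla^2 f(x^0)w=\int_0^1(1-t)\,2\,w^\top\nabla^2 f(x^0)w~\mathrm{d}t$.

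\textbf{The case $E(u,u)$.} Subtracting gives $g(u)-u^\top\nabla^2 f(x^0)u=F(u,u,u,\mymathbb{0})+F(u,u,-u,\mymathbb{0})$. The previous lemma bounds each term by $\tfrac{L_{\nabla^2 f}}{6}\|u\|^3$. Since $\|u\|\le\Delta$, this yields $E(u,u)\le\tfrac{L_{\nabla^2 f}}{3}\Delta^3$. The identity $E(u,u)=|g(u)-u^\top\nabla^2 f(x^0)u|$ follows by substituting $v=u$ into the definition: the terms collapse to $2f(x^0+u)+2f(x^0-u)-f(x^0+u)-f(x^0-u)-2f(x^0)$.

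\textbf{The general case.} Put $w=\tfrac12(u+v)$. The expression inside $E(u,v)$ is $2g(w)-\tfrac12 g(u)-\tfrac12 g(v)-u^\top\nabla^2 f(x^0)v$; one checks that the $f(x^0)$ coefficients agree, since $-4+1+1=-2$. Bounding $2E(w,w)+\tfrac12E(u,u)+\tfrac12E(v,v)$ naively only gives $L_{\nabla^2 f}\Delta^3$, so I would instead regroup inside the integrals before estimating.

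Write $A_\pm(t)=\nabla^2 f(x^0\pm tw)$, $B_\pm(t)=\nabla^2 f(x^0\pm tu)$ and $C_\pm(t)=\nabla^2 f(x^0\pm tv)$. By bilinearity and symmetry of the Hessian,
\begin{equation*}
2w^\top A w=\tfrac12 u^\top A u+\tfrac12 v^\top A v+u^\top A v .
\end{equation*}
Hence the integrand of $2g(w)-\tfrac12g(u)-\tfrac12g(v)-u^\top\nabla^2 f(x^0)v$ becomes, for each sign $\pm$,
\begin{equation*}
\tfrac12 u^\top(A_\pm-B_\pm)u+\tfrac12 v^\top(A_\pm-C_\pm)v+u^\top\bigl(A_\pm-\nabla^2 f(x^0)\bigr)v .
\end{equation*}
Integrated against $(1-t)$, the whole expression equals
\begin{align*}
&\tfrac12\bigl[F(u,u,w,u)+F(u,u,-w,-u)\bigr]+\tfrac12\bigl[F(v,v,w,v)+F(v,v,-w,-v)\bigr]\\
&\quad+F(u,v,w,\mymathbb{0})+F(u,v,-w,\mymathbb{0}).
\end{align*}

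Now apply the previous lemma to each term, using $\|w\|\le\Delta$ and $\|w-u\|=\|w-v\|=\tfrac12\|v-u\|\le\Delta$.
\begin{itemize}
\item The two terms $F(u,v,\pm w,\mymathbb{0})$ contribute at most $2\cdot\tfrac{L_{\nabla^2 f}}{6}\Delta^3=\tfrac{L_{\nabla^2 f}}{3}\Delta^3$.
\item The $u$-bracket contributes at most $\tfrac12\cdot2\cdot\tfrac{L_{\nabla^2 f}}{6}\Delta^3=\tfrac{L_{\nabla^2 f}}{6}\Delta^3$.
\item The $v$-bracket likewise contributes at most $\tfrac{L_{\nabla^2 f}}{6}\Delta^3$.
\end{itemize}
Summing gives $E(u,v)\le\tfrac{2L_{\nabla^2 f}}{3}\Delta^3$.

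The main obstacle is this regrouping step. The difference Hessian paired with $u^\top(\cdot)u$ must be $A_\pm-B_\pm$, comparing the Hessian at $x^0\pm tw$ with the one at $x^0\pm tu$, rather than each term being compared with $\nabla^2 f(x^0)$. Only this pairing gives the distances $\|w-u\|\le\Delta$ and the coefficient $\tfrac12$ that produce the constant $\tfrac23$.
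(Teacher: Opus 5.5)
Your proposal is correct and follows the paper's proof. The paper uses the same six-term decomposition,
\[
\tfrac12 F(u,u,w,u)+\tfrac12 F(u,u,-w,-u)+\tfrac12 F(v,v,w,v)+\tfrac12 F(v,v,-w,-v)+F(u,v,w,\mymathbb{0})+F(u,v,-w,\mymathbb{0}),
\]
with $w=\tfrac12(u+v)$, and bounds each term by the preceding lemma. The only cosmetic difference is the $E(u,u)$ case: the paper sets $v=u$ in its intermediate estimate so that the $\|u-v\|$ terms vanish, whereas you compute $F(u,u,u,\mymathbb{0})+F(u,u,-u,\mymathbb{0})$ directly, which amounts to the same thing.
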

    \begin{proof}
        Notice that $u^\top\nabla^2 f(x^0)v = 2\int_{0}^{1} (1-t) u^\top \nabla^2 f(x^0) v~\mathrm{d}t$ and, for all $d\in\mathbb{R}^n$, $f(x^0 + d) = f(x^0) + \nabla f(x^0)^\top d + \int_{0}^{1} (1-t) d^\top \nabla^2 f(x^0+td) d~\mathrm{d}t$.
        We can compute that 
        \begin{align*}
            &~~E(u,v)\\
            &= \left|\frac{1}{2}F(u,u,\frac{1}{2}(u+v),u) + \frac{1}{2}F(v,v,\frac{1}{2}(u+v),v) + \frac{1}{2}F(u,u,-\frac{1}{2}(u+v),-u)\right.\\
            &\hspace{0.6cm}\left.+ \frac{1}{2}F(v,v,-\frac{1}{2}(u+v),-v) + F(u,v,\frac{1}{2}(u+v),\mymathbb{0}) + F(u,v,-\frac{1}{2}(u+v),\mymathbb{0})\right|\\
            &\le \frac{L_{\nabla^2 f}}{6}\left(\frac{1}{2}\|u\|^2\|u-v\| + \frac{1}{2}\|v\|^2\|u-v\| + \|u\|\|v\|\|u+v\|\right) \le \frac{2L_{\nabla^2 f}}{3}\Delta^3.
        \end{align*}
        Finally, if we take $u=v$ in the above inequality, then $\|u-v\|=0$ and so
        $E(u,u) \le \frac{L_{\nabla^2 f}}{6}\|u\|\|v\|\|u+v\| \le \frac{L_{\nabla^2 f}}{3}\Delta^3$.
    \end{proof}

    We begin with a directional Hessian error bound in the directions $d^i\slash\|d^i\|$.  Lemma~\ref{lem:|di(MNMFNHess-Hess)di|} shows that this bound is $\mathcal{O}(\Delta_{D^Y})$ for the MN and MFN models.
    \begin{lemma}\label{lem:|di(MNMFNHess-Hess)di|}
        Suppose that Assumptions~\ref{ass:structuredY} to \ref{ass:MNfeasible} hold.  Let $\mathcal{X}\in\{\mathrm{MN},\mathrm{MFN}\}$. Then, for any non-zero $d^i\in D^Y$,
        \begin{equation}\label{ineq:|di(MNMFNHess-Hess)di|}
            \left|\left(\frac{d^i}{\|d^i\|}\right)^\top\left(\nabla_{\mathcal{X}}^2f(x^0;Y)-\nabla^2f(x^0)\right)\frac{d^i}{\|d^i\|}\right| \le \frac{L_{\nabla^2f}}{3}\Delta_{D^Y}.
        \end{equation}
    \end{lemma}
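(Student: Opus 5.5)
Both the MN and the MFN model interpolate $f$ on $Y$. Under Assumption~\ref{ass:structuredY} the sample set contains both $x^0+d^i$ and $x^0-d^i$, so the plan is to add the two interpolation conditions at these points. This eliminates the linear term and isolates $(d^i)^\top H d^i$, which we then compare with the central second difference via the second part of Lemma~\ref{lem:C2+GivesO(D3)}.

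Let $(\alpha,H)$ be any element of $\mathcal{M}^{x^0,Y}_{\mathcal{X}}$. Such an element exists by Assumption~\ref{ass:MNfeasible}, which is feasibility of Problem~\eqref{pro:MN}; the constraints of Problem~\eqref{pro:MFN} are identical, so it is feasible as well. For MN the solution is unique. For MFN only $H$ matters in the statement, and $H=\nabla^2_{\mathrm{MFN}}f(x^0;Y)$ is unique by the remark after Assumption~\ref{ass:MNfeasible}. In either case the relevant $H$ is well defined.

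Both $d^i$ and $-d^i$ appear as columns of $D^Y$, so $H$ satisfies the two constraints
\begin{align*}
(d^i)^\top\alpha+\tfrac12 (d^i)^\top H d^i &= f(x^0+d^i)-f(x^0),\\
-(d^i)^\top\alpha+\tfrac12 (d^i)^\top H d^i &= f(x^0-d^i)-f(x^0).
\end{align*}
Adding them gives
\begin{equation*}
(d^i)^\top H d^i = f(x^0+d^i)+f(x^0-d^i)-2f(x^0).
\end{equation*}
Subtracting $(d^i)^\top\nabla^2 f(x^0) d^i$ from both sides, the right-hand side becomes exactly $E(d^i,d^i)$ from Lemma~\ref{lem:C2+GivesO(D3)}.

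Apply that lemma with $u=d^i$ and $\Delta=\|d^i\|$. This choice is legitimate because $d^i\in B(\mymathbb{0};\|d^i\|)$, and it yields the sharper bound
\begin{equation*}
\left|(d^i)^\top\bigl(H-\nabla^2 f(x^0)\bigr)d^i\right|\le \frac{L_{\nabla^2 f}}{3}\|d^i\|^3.
\end{equation*}
Dividing by $\|d^i\|^2>0$, which is allowed since $d^i\neq\mymathbb{0}$, leaves $\frac{L_{\nabla^2 f}}{3}\|d^i\|\le \frac{L_{\nabla^2 f}}{3}\Delta_{D^Y}$, which is \eqref{ineq:|di(MNMFNHess-Hess)di|}.

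There is no real obstacle here. The only points needing care are the well-definedness of $\nabla^2_{\mathrm{MFN}}f(x^0;Y)$ without full MFN poisedness, handled by the cited uniqueness of the Hessian, and the observation that the argument never uses the minimality of the objective, only interpolation at $\pm d^i$.
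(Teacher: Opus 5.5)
Your proof is correct and follows the paper's route. The paper's ``direct row operations'' are exactly your addition of the interpolation constraints at $x^0\pm d^i$, which isolates $(d^i)^\top H d^i = f(x^0+d^i)+f(x^0-d^i)-2f(x^0)$. Both arguments then apply the second part of Lemma~\ref{lem:C2+GivesO(D3)} with $u=d^i$ and $\Delta=\|d^i\|$, and divide by $\|d^i\|^2$.
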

    \begin{proof}
        Under Assumption~\ref{ass:structuredY}, direct row operations show that the equality constraints of Problems~\eqref{pro:MN} and \eqref{pro:MFN} are 
        \begin{align*}
        \begin{cases}
            &\left(d^i\right)^\top\alpha = \frac{1}{2}\left(f(x^0+d^i)-       f(x^0)\right)-\frac{1}{2}\left(f(x^0-d^i)-       f(x^0)\right),~~~i\in [p],\\
            &\left(d^i\right)^\top H\left(d^i\right) = f(x^0+d^i) + f(x^0-d^i)-2f(x^0),~~~i\in [p].
        \end{cases}
        \end{align*} 
        Using Lemma~\ref{lem:C2+GivesO(D3)} with $u=d^i$ and $\Delta=\|d^i\|$, we obtain
        \begin{align*}
            \left|\left(\frac{d^i}{\|d^i\|}\right)^\top\left(\nabla_{\mathcal{X}}^2f(x^0;Y)-\nabla^2f(x^0)\right)\frac{d^i}{\|d^i\|}\right| = \frac{1}{\|d^i\|^2}E(d^i,d^i) \le \frac{L_{\nabla^2f}}{3}\Delta_{D^Y}.
        \end{align*}
    \end{proof}

    If we replace one of the $d^i\slash\|d^i\|$ on the left-hand side of Inequality \eqref{ineq:|di(MNMFNHess-Hess)di|} by $d^j\slash\|d^j\|$ where $i\neq j$, then the error bound is slightly worse than that in Inequality \eqref{ineq:|di(MNMFNHess-Hess)di|}.  As shown in Lemma~\ref{lem:|di(MNMFNHess-Hess)dj|}, the bound in this case is $\mathcal{O}(1)$.
    \begin{lemma}\label{lem:|di(MNMFNHess-Hess)dj|}
        Suppose that Assumptions \ref{ass:D^Y_full_rank} to \ref{ass:MNfeasible} hold and $Y$ is poised for MFN interpolation.
        Let $\mathcal{X}\in\{\mathrm{MN},\mathrm{MFN}\}$. Then, for any non-zero $d^i,d^j\in D^Y$ with $i\neq j$,
        \begin{equation*}
            \left|\left(\frac{d^i}{\|d^i\|}\right)^\top\left(\nabla_{\mathcal{X}}^2f(x^0;Y)-\nabla^2f(x^0)\right)\frac{d^j}{\|d^j\|}\right| \le 4\kappa^{ef}_{\mathcal{X}}\frac{\Delta_{D^Y}^2}{\|d^i\|\|d^j\|} + \frac{2L_{\nabla^2 f}}{3}\frac{\Delta_{D^Y}^3}{\|d^i\|\|d^j\|}.
        \end{equation*}
    \end{lemma}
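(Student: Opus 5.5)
Write $m:=m^{x^0,Y}_{\mathcal{X}}$, $H:=\nabla^2_{\mathcal{X}}f(x^0;Y)$ and $\alpha:=\nabla_{\mathcal{X}}f(x^0;Y)$. The idea is to express $(d^i)^\top H d^j$ through model values, and $(d^i)^\top\nabla^2 f(x^0) d^j$ through function values, both using the same polarization-type combination that appears in $E(u,v)$ of Lemma~\ref{lem:C2+GivesO(D3)}. Since $m$ is quadratic with Hessian $H$, the combination in $E(u,v)$ is exact on $m$. That is, with $u=d^i$ and $v=d^j$,
\[
2m(x^0+\tfrac12(u+v))+2m(x^0-\tfrac12(u+v))-\tfrac12 m(x^0+u)-\tfrac12 m(x^0+v)-\tfrac12 m(x^0-u)-\tfrac12 m(x^0-v)-2m(x^0)=u^\top H v .
\]
This holds because the linear terms cancel, the constant terms sum to $(2+2-\tfrac12\cdot4-2)f(x^0)=0$, and the quadratic terms give $\tfrac12(u+v)^\top H(u+v)-\tfrac12 u^\top Hu-\tfrac12 v^\top Hv=u^\top Hv$.

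Subtracting this identity from the same combination of $f$ values, we get
\[
\bigl|u^\top(H-\nabla^2f(x^0))v\bigr|\le E(u,v)+\Bigl|\,\text{combination of }(f-m)\text{ values}\Bigr|.
\]
The first term is bounded by $\frac{2L_{\nabla^2f}}{3}\Delta_{D^Y}^3$ by Lemma~\ref{lem:C2+GivesO(D3)} with $\Delta=\Delta_{D^Y}$, since $u,v\in B(\mymathbb{0};\Delta_{D^Y})$.

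For the second term we use interpolation. Under Assumption~\ref{ass:structuredY}, the points $x^0$, $x^0\pm d^i$ and $x^0\pm d^j$ all lie in $Y$, so $f-m$ vanishes there. The only surviving terms are therefore $2(f-m)(x^0\pm\tfrac12(d^i+d^j))$. Both points lie in $B(x^0;\Delta_{D^Y})$, because $\|\tfrac12(d^i+d^j)\|\le\Delta_{D^Y}$. We may then apply the fully linear bound $|f(x)-m(x)|\le\kappa^{ef}_{\mathcal{X}}\Delta_{D^Y}^2$ from Theorem~\ref{thm:MFNfullylinear} for MFN, or Theorem~\ref{thm:MNfullylinear} for MN. Together the two surviving terms contribute at most $4\kappa^{ef}_{\mathcal{X}}\Delta_{D^Y}^2$. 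Dividing through by $\|d^i\|\|d^j\|$ gives exactly the claimed inequality.

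The main obstacle is checking that the fully linear theorems apply under the stated hypotheses. Poisedness for MFN makes both $H$ and $\alpha$ unique, and Assumption~\ref{ass:D^Y_full_rank} is required by Theorem~\ref{thm:QuadfullylinearConsts}. For MN, Theorem~\ref{thm:MNHessupperbd} additionally needs a compact set $K$ containing $x^0$; I would take this as implicit in invoking $\kappa^{ef}_{\mathrm{MN}}$. We also need that both models genuinely interpolate on $Y$, which holds by feasibility (Assumption~\ref{ass:MNfeasible}).
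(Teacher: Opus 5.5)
Your proof is correct and follows essentially the same route as the paper: express $(d^i)^\top \nabla^2_{\mathcal{X}}f(x^0;Y) d^j$ through the polarization combination of model values, and replace the model by $f$ at the interpolation points $x^0, x^0\pm d^i, x^0\pm d^j$. You then bound the two midpoint discrepancies by $2\kappa^{ef}_{\mathcal{X}}\Delta_{D^Y}^2$ each via full linearity, and bound the remainder by $E(d^i,d^j)$ from Lemma~\ref{lem:C2+GivesO(D3)}. Your remark that the MN case implicitly needs $x^0$ in a compact set $K$, as in Theorem~\ref{thm:MNHessupperbd}, is fair; the paper's proof also leaves this implicit.
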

    \begin{proof}
        Under the assumptions, we have that $m^{x^0,Y}_{\mathcal{X}}(x)$ is fully linear by Theorems \ref{thm:MFNfullylinear} and \ref{thm:MNfullylinear}.  Since $(\nabla_{\mathcal{X}}f(x^0;Y),\nabla^2_{\mathcal{X}}f(x^0;Y))$ satisfies the constraints of Problems~\eqref{pro:MN} and~\eqref{pro:MFN}, we can compute that
        \begin{align*}
            &~~\left(d^i\right)^\top\nabla^2_{\mathcal{X}}f(x^0;Y)d^j\\
            &= 2m^{x^0,Y}_{\mathcal{X}}(x^0+\frac{1}{2}d^i+\frac{1}{2}d^j)+2m^{x^0,Y}_{\mathcal{X}}(x^0-\frac{1}{2}d^i-\frac{1}{2}d^j)\\
            &\hspace{1cm}-\frac{1}{2}f(x^0+d^i)-\frac{1}{2}f(x^0+d^j)-\frac{1}{2}f(x^0-d^i)-\frac{1}{2}f(x^0-d^j)-2f(x^0).
        \end{align*}
        Using the triangle inequality and the fact that $m^{x^0,Y}_{\mathcal{X}}(x)$ is fully linear, we get
        \begin{align*}
            \left|\left(d^i\right)^\top\left(\nabla_{\mathcal{X}}^2f(x^0;Y)-\nabla^2f(x^0)\right)d^j\right| \le 4\kappa^{ef}_{\mathcal{X}}\Delta_{D^Y}^2 + E(d^i,d^j).
        \end{align*}
        The result follows from Lemma~\ref{lem:C2+GivesO(D3)} with $u=d^i$, $v=d^j$, and $\Delta=\Delta_{D^Y}$.
    \end{proof}

    Using Lemmas~\ref{lem:|di(MNMFNHess-Hess)di|} and \ref{lem:|di(MNMFNHess-Hess)dj|}, we establish the following bounds on the directional Hessian error for any direction $d\slash\|d\|\in\mathbb{R}^n$.  These results further yield a $\mathcal{O}(1)$ bound on the Hessian approximation error.
    \begin{theorem}\label{thm:|d(MNMFNHess-Hess)d|}
        Suppose that the assumptions of Lemma~\ref{lem:|di(MNMFNHess-Hess)dj|} hold.  Denote $D:=[d^1\cdots d^p]$.  Then, $\mathrm{col}(D)=\mathbb{R}^n$ and for any non-zero $d\in\mathbb{R}^n$, we can write $d=Dv$ with $v=D^\dagger d$.  Moreover,
        \begin{align*}
            &~~\left|\left(\frac{d}{\|d\|}\right)^\top\left(\nabla_{\mathcal{X}}^2f(x^0;Y)-\nabla^2f(x^0)\right)\frac{d}{\|d\|}\right|\\
            &\le 4\frac{\|v\|_1^2-\|v\|_{\infty}^2}{\|v\|^2}\kappa^{ef}_{\mathcal{X}}\left\|\overline{D}^\dagger\right\|^2+\frac{L_{\nabla^2 f}}{3}\left\|\overline{D}^\dagger\right\|^2\left(2\frac{\|v\|_1^2-\|v\|_{\infty}^2}{\|v\|^2}+1\right)\Delta_D.
        \end{align*}
        In particular, we have that
        \begin{align*}
            &~~\left\|\nabla_{\mathcal{X}}^2f(x^0;Y)-\nabla^2f(x^0)\right\| = \max_{\|d\|=1}\left|d^\top\left(\nabla_{\mathcal{X}}^2f(x^0;Y)-\nabla^2f(x^0)\right)d\right|\\
            &\le 4\left(p-\frac{1}{p}\right)\kappa^{ef}_{\mathcal{X}}\left\|\overline{D}^\dagger\right\|^2 + \frac{L_{\nabla^2 f}}{3}\left\|\overline{D}^\dagger\right\|^2\left(2p-\frac{2}{p}+1\right)\Delta_D.
        \end{align*}
        
    \end{theorem}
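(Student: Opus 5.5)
First, $\mathrm{col}(D)=\mathbb{R}^n$: under Assumption~\ref{ass:structuredY}, $D^Y=[D~-D]$, so $\mathrm{col}(D^Y)=\mathrm{col}(D)$, and Assumption~\ref{ass:D^Y_full_rank} gives $\mathrm{col}(D)=\mathbb{R}^n$. Hence $DD^\dagger=I_n$, and $v=D^\dagger d$ satisfies $d=Dv$. Also $\Delta_D=\Delta_{D^Y}$, so every bound from Lemmas~\ref{lem:|di(MNMFNHess-Hess)di|} and \ref{lem:|di(MNMFNHess-Hess)dj|} can be stated with $\Delta_D$.

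Set $G:=\nabla_{\mathcal{X}}^2f(x^0;Y)-\nabla^2f(x^0)$, which is symmetric. Expanding gives
\begin{equation*}
d^\top G d=\sum_{i=1}^p v_i^2 (d^i)^\top G d^i+\sum_{i\neq j} v_iv_j (d^i)^\top G d^j .
\end{equation*}
Zero columns contribute nothing, so we only need the non-zero $d^i$. For the diagonal terms, Lemma~\ref{lem:|di(MNMFNHess-Hess)di|} gives $|(d^i)^\top G d^i|\le \frac{L_{\nabla^2 f}}{3}\|d^i\|^2\Delta_D\le\frac{L_{\nabla^2 f}}{3}\Delta_D^3$. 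For the off-diagonal terms, Lemma~\ref{lem:|di(MNMFNHess-Hess)dj|}, after multiplying through by $\|d^i\|\|d^j\|$, gives $|(d^i)^\top G d^j|\le 4\kappa^{ef}_{\mathcal{X}}\Delta_D^2+\frac{2L_{\nabla^2 f}}{3}\Delta_D^3$. Summing yields
\begin{equation*}
|d^\top G d|\le \|v\|^2\tfrac{L_{\nabla^2 f}}{3}\Delta_D^3+\Big(\sum_{i\neq j}|v_i||v_j|\Big)\Big(4\kappa^{ef}_{\mathcal{X}}\Delta_D^2+\tfrac{2L_{\nabla^2 f}}{3}\Delta_D^3\Big),
\end{equation*}
where $\sum_{i\neq j}|v_i||v_j|=\|v\|_1^2-\|v\|^2$.

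\textbf{Main obstacle: matching the stated combinatorial factor.} The expansion naturally produces $\|v\|_1^2-\|v\|^2$, but the statement has $\|v\|_1^2-\|v\|_\infty^2$. Since $\|v\|^2\ge\|v\|_\infty^2$, we have $\|v\|_1^2-\|v\|^2\le\|v\|_1^2-\|v\|_\infty^2$, so the stated bound is weaker and still follows. I would take this cruder route rather than try to sharpen it.

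Next, divide by $\|d\|^2$ and use $\|v\|\le\|D^\dagger\|\|d\|=\|\overline D^\dagger\|\|d\|/\Delta_D$, i.e. $\Delta_D^2/\|d\|^2\le \|\overline D^\dagger\|^2/\|v\|^2$. Writing every term with the factor $\|v\|^2$ in the denominator produces exactly the displayed bound. In particular, the diagonal part becomes $\frac{L_{\nabla^2 f}}{3}\|\overline D^\dagger\|^2\Delta_D$, which is the "$+1$" term.

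For the "in particular" claim, the first equality holds because $G$ is symmetric, so its spectral norm equals $\max_{\|d\|=1}|d^\top Gd|$. It remains to bound $(\|v\|_1^2-\|v\|_\infty^2)/\|v\|^2$ uniformly over $v\in\mathbb{R}^p$. Use $\|v\|_1^2\le p\|v\|^2$ and $\|v\|_\infty^2\ge\|v\|^2/p$; this gives the factor $p-\frac1p$, and substituting it yields both stated constants.
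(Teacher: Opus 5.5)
Your proof is correct and follows the paper's argument. You expand $d^\top G d$ in terms of $v=D^\dagger d$, bound the diagonal and off-diagonal terms with the two preceding lemmas, and weaken $\sum_{i\neq j}|v_i||v_j|=\|v\|_1^2-\|v\|^2$ to $\|v\|_1^2-\|v\|_\infty^2$; the paper reaches the same quantity via the index of the largest $|v_k|$. You then convert $\Delta_D^2/\|d\|^2$ via $\|v\|\le\|D^\dagger\|\|d\|$. Your uniform factor $p-\frac{1}{p}$, obtained from $\|v\|_1^2\le p\|v\|^2$ and $\|v\|_\infty^2\ge\|v\|^2/p$, is a more direct substitute for the paper's maximization over the unit sphere, which is only needed to show the factor is attained.
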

    \begin{proof}
        Under Assumption \ref{ass:D^Y_full_rank}, we  have that $\mathrm{col}(D)=\mathrm{col}(D^Y)=\mathbb{R}^n$ and $d=DD^\dagger d=Dv=\sum_{i=1}^pv_id^i$.  Moreover, $\Delta_D=\Delta_{D^Y}$.  Combining the triangle inequality and previous results in this section, we get
        \begin{align*}
            &~~\left|\left(\frac{d}{\|d\|}\right)^\top\left(\nabla_{\mathcal{X}}^2f(x^0;Y)-\nabla^2f(x^0)\right)\frac{d}{\|d\|}\right|\\
            &= \frac{1}{\|d\|^2}\left|\left(\sum_{i=1}^pv_i\left(d^i\right)^\top\right)\left(\nabla_{\mathcal{X}}^2f(x^0;Y)-\nabla^2f(x^0)\right)\left(\sum_{i=1}^pv_id^i\right)\right|\\
            &\le \frac{L_{\nabla^2 f}}{3}\frac{\|v\|^2}{\|d\|^2}\Delta_D^3 + \frac{1}{\|d\|^2}\left(4\kappa^{ef}_{\mathcal{X}}\Delta_D^2 + \frac{2L_{\nabla^2 f}}{3}\Delta_D^3\right)\sum_{i,j=1, i\neq j}^p\left|v_iv_j\right|.
        \end{align*}
        Without loss of generality, we suppose that $v_k$ has the largest absolute value among all $v_i$, where $i\in [p]$.  Then,
        \begin{align*}
            \sum_{i,j=1, i\neq j}^p |v_i v_j| &= 2\left|v_k\right|\sum_{i=1, i\neq k}^p\left|v_i\right|+\left(\sum_{i=1, i\neq k}^p\left|v_i\right|\right)^2-\sum_{i=1, i\neq k}^p\left|v_i\right|^2\\
            &\le 2\left|v_k\right|\sum_{i=1, i\neq k}^p\left|v_i\right| + \left(\sum_{i=1, i\neq k}^p\left|v_i\right|\right)^2\\
            &= 2\left\|v\right\|_{\infty}\left(\left\|v\right\|_1-\left\|v\right\|_{\infty}\right)+\left(\left\|v\right\|_1-\left\|v\right\|_{\infty}\right)^2 = \left\|v\right\|_1^2-\left\|v\right\|_{\infty}^2.
        \end{align*}
        Notice that $\|\overline{D}^\dagger\|=\|D^\dagger\|\Delta_D$, and $\|v\|\le\|D^\dagger\|\|d\|$ implies $\|d\|\ge\|v\|\slash\|D^\dagger\|$.  We obtain
        \begin{align*}
            &~~\left|\left(\frac{d}{\|d\|}\right)^\top\left(\nabla_{\mathcal{X}}^2f(x^0;Y)-\nabla^2f(x^0)\right)\frac{d}{\|d\|}\right|\\
            &\le \frac{L_{\nabla^2 f}}{3}\frac{\|v\|^2}{\|d\|^2}\Delta_D^3 + \left(4\kappa^{ef}_{\mathcal{X}}\Delta_D^2 + \frac{2L_{\nabla^2 f}}{3}\Delta_D^3\right)\frac{\|v\|_1^2-\|v\|_{\infty}^2}{\|d\|^2}\\
            &\le 4\frac{\|v\|_1^2-\|v\|_{\infty}^2}{\|v\|^2}\kappa^{ef}_{\mathcal{X}}\left\|\overline{D}^\dagger\right\|^2+\frac{L_{\nabla^2 f}}{3}\left\|\overline{D}^\dagger\right\|^2\left(2\frac{\|v\|_1^2-\|v\|_{\infty}^2}{\|v\|^2}+1\right)\Delta_D.
        \end{align*}

        Finally, we take the maximum over all $d$ with $\|d\|=1$, which is equivalent to maximizing over all $v\in\mathrm{col}(D^\top)$ with $\|Dv\|=1$.  Since we are only seeking an upper bound, we ignore the constraint $v\in\mathrm{col}(D^\top)$. Let $w=v\slash\|v\|$.  Then,
        \begin{align*}
            \max_{\|Dv\|=1}\frac{\|v\|_1^2-\|v\|_{\infty}^2}{\|v\|^2} = \max_{\|D w\|\|v\|=1}\left(\|w\|_1^2-\|w\|_{\infty}^2\right) = \max_{\substack{\|w\|=1\\ \|Dw\|>0}}\left(\|w\|_1^2-\|w\|_{\infty}^2\right).
        \end{align*}
        Notice that the set $\argmax_{\|w\|=1}(\|w\|_1^2-\|w\|_{\infty}^2) = \{\frac{1}{\sqrt{p}}[\pm 1,\ldots,\pm 1]^\top\}\subseteq\mathbb{R}^p$ spans $\mathbb{R}^p$.  Since $\mathrm{rank}(D)=n>0$, there must exist an element in the set such that $\|Dw\|>0$.  This gives
        \begin{equation*}
            \max_{\|w\|=1,\|Dw\|>0}\left(\|w\|_1^2-\|w\|_{\infty}^2\right) = \max_{\|w\|=1}\left(\|w\|_1^2-\|w\|_{\infty}^2\right) = p-\frac{1}{p}
        \end{equation*}
        and so
        \begin{align*}
            &~~\left\|\nabla_{\mathcal{X}}^2f(x^0;Y)-\nabla^2f(x^0)\right\| =\max_{\|d\|=1}\left|d^\top\left(\nabla_{\mathcal{X}}^2f(x^0;Y)-\nabla^2f(x^0)\right)d\right|\\
            &\le 4\left(p-\frac{1}{p}\right)\kappa^{ef}_{\mathcal{X}}\left\|\overline{D}^\dagger\right\|^2 + \frac{L_{\nabla^2 f}}{3}\left\|\overline{D}^\dagger\right\|^2\left(2p-\frac{2}{p}+1\right)\Delta_D.
        \end{align*}
    \end{proof}
    
    \begin{remark}
        Comparing Lemmas \ref{lem:|di(MNMFNHess-Hess)di|}, \ref{lem:|di(MNMFNHess-Hess)dj|}, and Theorem \ref{thm:|d(MNMFNHess-Hess)d|}, we see that the Hessian approximations arising from the MN and MFN models have directional errors ranging from $\mathcal{O}(1)$ to $\mathcal{O}(\Delta_{D^Y})$.  In particular, along the directions $d^i\slash\|d^i\|$, the error is $\mathcal{O}(\Delta_{D^Y})$, indicating that the MN and MFN models attain the same level of accuracy as fully quadratic models in these directions.
    \end{remark}
    \begin{remark}
        In Lemma~\ref{lem:|di(MNMFNHess-Hess)dj|} and Theorem~\ref{thm:|d(MNMFNHess-Hess)d|}, Assumptions~\ref{ass:D^Y_full_rank} and \ref{ass:C1+}, together with the requirement that $Y$ be poised for MFN interpolation, are imposed solely to ensure that $m^{x^0,Y}_{\mathcal{X}}(x)$ is fully linear.  However, an inspection of the proofs shows that the conclusions remain valid if these assumptions are replaced by the weaker requirement that $m^{x^0,Y}_{\mathcal{X}}(x)$ be fully linear only on the affine subspace $x^0 + \mathrm{col}(D^Y)$ where $\mathrm{col}(D^Y)$ is not necessarily $\mathbb{R}^n$.
        
        A detailed study of fully linear models defined on affine subspaces can be found in \cite{cartis2023scalable,chen2024qfully}, while a thorough analysis of the relationship between full-space and subspace models is provided in \cite{chen2026relationships}.
    \end{remark}

    Now, we develop similar results for the GSH.  The results can be applied to derive similar directional error bounds for the Hessian of QS models.  We will use the following, more specific version of Assumption~\ref{ass:structuredY}.
    \begin{assumption}\label{ass:structuredS&Ti}
        The matrix $S=[d^1\cdots d^p]$ has full column rank and for $i\in [p]$, $T_i = [-d^i]$.  
    \end{assumption}

    We have $\Delta_{D^Y}=\Delta_S$ under Assumption~\ref{ass:structuredS&Ti}.  Similar to Lemma~\ref{lem:|di(MNMFNHess-Hess)di|}, the directional Hessian error bound of GSH in the directions $d^i\slash\|d^i\|$ is $\mathcal{O}(\Delta_S)$.
    \begin{lemma}\label{lem:|di(GSH-Hess)di|}
        Suppose that Assumptions \ref{ass:fC2+}, \ref{ass:structuredS&Ti} hold.  Then, for any non-zero $d^i\in D^Y$,
        \begin{equation}\label{ineq:|di(GSH-Hess)di|}
            \left|\left(\frac{d^i}{\|d^i\|}\right)^\top\left(\nabla_s^2f(x^0;S;T_{1:p})-\nabla^2f(x^0)\right)\frac{d^i}{\|d^i\|}\right| \le \frac{L_{\nabla^2f}}{3}\Delta_S.
        \end{equation}
    \end{lemma}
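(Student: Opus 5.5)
Under Assumption~\ref{ass:structuredS&Ti}, each $T_i=[-d^i]$ is a single column, so the GSH has a simple closed form. For a single nonzero column $t$ we have $T^\dagger = t^\top/\|t\|^2$, and hence $(T^\top)^\dagger = t/\|t\|^2$. Therefore the $i$-th row of $\delta_f^2(x^0;S;T_{1:p})$ is
\begin{equation*}
\frac{c_i}{\|d^i\|^2}\left(-d^i\right)^\top,
\end{equation*}
where
\begin{equation*}
c_i := f(x^0+d^i-d^i)-f(x^0+d^i)-f(x^0-d^i)+f(x^0) = 2f(x^0)-f(x^0+d^i)-f(x^0-d^i).
\end{equation*}
Multiplying the sign through, the row equals $\frac{f(x^0+d^i)+f(x^0-d^i)-2f(x^0)}{\|d^i\|^2}(d^i)^\top$. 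This is the same second-difference quantity that appears in the proof of Lemma~\ref{lem:|di(MNMFNHess-Hess)di|}.

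Because $S$ has full column rank, $S^\dagger S=I_p$, so $S^\top (S^\top)^\dagger = (S^\dagger S)^\top = I_p$. Consequently
\begin{equation*}
S^\top\nabla_s^2f(x^0;S;T_{1:p}) = \delta_f^2(x^0;S;T_{1:p}).
\end{equation*}
Taking the $i$-th row gives $(d^i)^\top \nabla_s^2 f(x^0;S;T_{1:p}) = \frac{f(x^0+d^i)+f(x^0-d^i)-2f(x^0)}{\|d^i\|^2}(d^i)^\top$. Multiplying on the right by $d^i$ yields
\begin{equation*}
(d^i)^\top\nabla_s^2f(x^0;S;T_{1:p})\,d^i = f(x^0+d^i)+f(x^0-d^i)-2f(x^0).
\end{equation*}
The full column rank hypothesis is what makes this identity exact rather than a projection, so it is the step that needs care. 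It only uses $S^\dagger S=I_p$.

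Subtracting $(d^i)^\top\nabla^2 f(x^0)d^i$ from both sides gives exactly $E(d^i,d^i)$. By Lemma~\ref{lem:C2+GivesO(D3)} with $u=d^i$ and $\Delta=\|d^i\|$, this is at most $\frac{L_{\nabla^2 f}}{3}\|d^i\|^3$. Dividing by $\|d^i\|^2$ and using $\|d^i\|\le\Delta_S$ gives \eqref{ineq:|di(GSH-Hess)di|}. No symmetry of the GSH is needed, because only the quadratic form along $d^i$ enters.
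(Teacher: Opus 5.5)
Your proof is correct and follows essentially the same route as the paper. You compute the rows of $\delta_f^2(x^0;S;T_{1:p})$ as $\frac{f(x^0+d^i)+f(x^0-d^i)-2f(x^0)}{\|d^i\|^2}(d^i)^\top$, use full column rank of $S$ to reduce $(d^i)^\top\nabla_s^2f(x^0;S;T_{1:p})d^i$ to the second difference, and then bound $E(d^i,d^i)$ via Lemma~\ref{lem:C2+GivesO(D3)}. Your identity $S^\top(S^\top)^\dagger=I_p$ is the same fact as the paper's $S^\dagger d^i=e^i$.
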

    \begin{proof}
        Notice that $\nabla_sf(x^0+d^i;T_i) = \left(f(x^0+d^i)-f(x^0)\right)d^i\slash\|d^i\|^2$ and $\nabla_sf(x^0;T_i) = \left(f(x^0)-f(x^0-d^i)\right)d^i\slash\|d^i\|^2$, so
        \begin{align*}
            &~~\nabla_s^2f(x^0;S;T_{1:p})\\
            &= \left(S^\top\right)^\dagger\mathrm{Diag}\left(\left[\tfrac{f(x^0+d^1)+f(x^0-d^1)-2f(x^0)}{\|d^1\|^2}~\cdots~\tfrac{f(x^0+d^p)+f(x^0-d^p)-2f(x^0)}{\|d^p\|^2}\right]\right)S^\top.
        \end{align*}
        By Assumption~\ref{ass:structuredS&Ti}, $S$ has full column rank and so $S^\dagger d^i=e^i$.  Therefore,
        \begin{align*}
            &~~\left|\left(\frac{d^i}{\|d^i\|}\right)^\top\left(\nabla_s^2f(x^0;S;T_{1:p})-\nabla^2f(x^0)\right)\frac{d^i}{\|d^i\|}\right|\\
            &= \frac{1}{\|d^i\|^2}\left|\left(\frac{f(x^0+d^i)+f(x^0-d^i)-2f(x^0)}{\|d^i\|^2}e^i\right)^\top S^\top d^i-\left(d^i\right)^\top\nabla^2f(x^0)d^i\right|\\
            &= \frac{1}{\|d^i\|^2}E(d^i,d^i) \le \frac{L_{\nabla^2f}}{3}\left\|d^i\right\| \le \frac{L_{\nabla^2f}}{3}\Delta_S,
        \end{align*}
        where the last line follows from Lemma~\ref{lem:C2+GivesO(D3)} with $u=d^i$ and $\Delta=\|d^i\|$.
    \end{proof}

    Similar to Lemma~\ref{lem:|di(MNMFNHess-Hess)dj|}, if we replace one of the $d^i\slash\|d^i\|$ on the left-hand side of Inequality \eqref{ineq:|di(GSH-Hess)di|} with~$d^j\slash\|d^j\|$ where $i\neq j$, then the error bound deteriorates to $\mathcal{O}(1)$.  A key difference between Lemmas~\ref{lem:|di(MNMFNHess-Hess)dj|} and \ref{lem:|di(GSH-Hess)dj|} is that the latter does not require the corresponding model to be fully linear.  Consequently, no full-rank assumption on $D^Y$, nor the requirement that $Y$ be poised for MFN interpolation, is needed.
    \begin{lemma}\label{lem:|di(GSH-Hess)dj|}
        Suppose that Assumptions~\ref{ass:fC2+}, \ref{ass:structuredS&Ti} hold. Then, for any non-zero $d^i,d^j\in D^Y$ with $i\neq j$,
        \begin{align*}
            \left|\left(\frac{d^i}{\|d^i\|}\right)^\top\left(\nabla_s^2f(x^0;S;T_{1:p})-\nabla^2f(x^0)\right)\frac{d^j}{\|d^j\|}\right| \le \left\|\nabla^2f(x^0)\right\| + \frac{L_{\nabla^2 f}}{3}\Delta_S.
        \end{align*}
    \end{lemma}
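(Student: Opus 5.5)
The plan is to reuse the explicit formula for the GSH from the proof of Lemma~\ref{lem:|di(GSH-Hess)di|}. Under Assumption~\ref{ass:structuredS&Ti},
\begin{equation*}
\nabla_s^2f(x^0;S;T_{1:p}) = \left(S^\top\right)^\dagger\mathrm{Diag}(c)S^\top,\qquad c_k := \frac{f(x^0+d^k)+f(x^0-d^k)-2f(x^0)}{\|d^k\|^2},\quad k\in[p].
\end{equation*}
Since $S$ has full column rank, $S^\dagger d^i=e^i$. Hence $(d^i)^\top(S^\top)^\dagger=(S^\dagger d^i)^\top=(e^i)^\top$, and
\begin{equation*}
(d^i)^\top\nabla_s^2f(x^0;S;T_{1:p})\,d^j = c_i\,(e^i)^\top S^\top d^j = c_i\,(d^i)^\top d^j.
\end{equation*}
So the bilinear form of the GSH along $d^i,d^j$ depends only on the single second-difference quotient $c_i$ and the inner product $(d^i)^\top d^j$. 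No interpolation or full-linearity argument is needed, which matches the remark preceding the statement.

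Next I set $u:=d^i/\|d^i\|$, $w:=d^j/\|d^j\|$, and $\gamma:=u^\top w$, so that $|\gamma|\le 1$. I decompose $c_i = u^\top\nabla^2f(x^0)u + r_i$, where $r_i := \bigl(f(x^0+d^i)+f(x^0-d^i)-2f(x^0)-(d^i)^\top\nabla^2f(x^0)d^i\bigr)/\|d^i\|^2$. By Lemma~\ref{lem:C2+GivesO(D3)} (the $E(u,u)$ estimate with $\Delta=\|d^i\|$), we get $|r_i|\le\frac{L_{\nabla^2f}}{3}\|d^i\|\le\frac{L_{\nabla^2f}}{3}\Delta_S$. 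The quantity to bound then becomes
\begin{equation*}
\left|c_i\gamma - u^\top\nabla^2f(x^0)w\right| \le \left|u^\top\nabla^2f(x^0)\left(\gamma u - w\right)\right| + |\gamma|\,|r_i|.
\end{equation*}

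The only step that needs a small idea is the first term, and I handle it with a projection. The vector $w-\gamma u$ is the component of $w$ orthogonal to $u$, so $\|\gamma u-w\| = \sqrt{1-\gamma^2}\le 1$. Cauchy--Schwarz then gives a first term of at most $\|\nabla^2f(x^0)\|$. The second term is at most $\frac{L_{\nabla^2 f}}{3}\Delta_S$ since $|\gamma|\le1$, and adding the two gives the claimed bound.

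As a byproduct, this also shows the sharper bound $\sqrt{1-\gamma^2}\,\|\nabla^2f(x^0)\|+|\gamma|\frac{L_{\nabla^2 f}}{3}\Delta_S$, which reduces to Lemma~\ref{lem:|di(GSH-Hess)di|} when $j=i$.
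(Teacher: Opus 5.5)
Your proof is correct and follows essentially the same route as the paper. Both use the explicit form $(S^\top)^\dagger\mathrm{Diag}(c)S^\top$ of the GSH and split the error into two pieces: the $E(d^i,d^i)$ remainder scaled by $|\gamma|\le 1$, and the term $u^\top\nabla^2f(x^0)(\gamma u-w)$, which is controlled because $\gamma u$ is the orthogonal projection of $w$ onto $\mathrm{span}(u)$. Your normalized bookkeeping also keeps the sharper factors $\sqrt{1-\gamma^2}$ and $|\gamma|$, which the paper discards.
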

    \begin{proof}
        Using similar proof techniques as in Lemma~\ref{lem:|di(GSH-Hess)di|}, we obtain
        \begin{align*}
            &~~\left|\left(\frac{d^i}{\|d^i\|}\right)^\top\left(\nabla_s^2f(x^0;S;T_{1:p})-\nabla^2f(x^0)\right)\frac{d^j}{\|d^j\|}\right|\\
            &= \frac{1}{\|d^i\|\|d^j\|}\left|\frac{(d^i)^\top d^j}{\|d^i\|^2}\left(f(x^0+d^i)+f(x^0-d^i)-2f(x^0)\right)-\left(d^i\right)^\top\nabla^2f(x^0)d^j\right|\\
            &\le \frac{1}{\|d^i\|^2}E(d^i,d^i)+\frac{1}{\|d^i\|\|d^j\|}\left|\left(d^i\right)^\top \nabla^2 f(x^0)\left(\frac{(d^i)^\top d^j}{\|d^i\|^2}d^i-d^j\right)\right|\\
            &\le \frac{1}{\|d^j\|}\left\|\nabla^2f(x^0)\right\|\left\|\frac{(d^i)^\top d^j}{\|d^i\|^2}d^i-d^j\right\| + \frac{L_{\nabla^2 f}}{3}\Delta_S.
        \end{align*}
        The proof is complete by noticing that $\frac{(d^i)^\top d^j}{\|d^i\|^2}d^i$ is the projection of $d^j$ onto the linear span of $d^i$, so $\|\frac{(d^i)^\top d^j}{\|d^i\|^2}d^i-d^j\| \le \|d^j\|$.
    \end{proof}

    Using Lemmas~\ref{lem:|di(GSH-Hess)di|} and \ref{lem:|di(GSH-Hess)dj|}, we can establish results similar to those in Theorem~\ref{thm:|d(MNMFNHess-Hess)d|}.  However, as we noted before Lemma~\ref{lem:|di(GSH-Hess)dj|}, we do not require any full-rank assumptions on $D^Y$.  Consequently, the directional Hessian error bounds in Theorem~\ref{thm:|d(GSH-Hess)d|} are established for any direction $d\slash\|d\|$ belonging to the linear span of $d^1,\ldots,d^p$.  These results further yield an $\mathcal{O}(1)$ bound on the Hessian approximation error restricted to the linear span of $d^1,\ldots,d^p$.
    \begin{theorem}\label{thm:|d(GSH-Hess)d|}
        Suppose that Assumptions~\ref{ass:fC2+}, \ref{ass:structuredS&Ti} hold. Then, for any non-zero $d\in\mathrm{col}(S)$, we can write $d=Sv$ with $v=S^\dagger d$.  Moreover,
        \begin{align*}
            &~~\left|\left(\frac{d}{\|d\|}\right)^\top\left(\nabla_s^2f(x^0;S;T_{1:p})-\nabla^2f(x^0)\right)\frac{d}{\|d\|}\right|\\
            &\le \frac{\|v\|_1^2-\|v\|_{\infty}^2}{\|v\|^2}\left\|\nabla^2 f(x^0)\right\|\left\|\overline{S}^\dagger\right\|^2 + \frac{L_{\nabla^2 f}}{3}\left\|\overline{S}^\dagger\right\|^2\left(\frac{\|v\|_1^2-\|v\|_{\infty}^2}{\|v\|^2}+1\right)\Delta_S.
        \end{align*}
        In particular, we have that
        \begin{align*}
            &~~\max_{d\in\mathrm{col}(S),\|d\|=1}\left|d^\top\left(\nabla_s^2f(x^0;S;T_{1:p})-\nabla^2f(x^0)\right)d\right|\\
            &\le \left(p-\frac{1}{p}\right)\left\|\nabla^2 f(x^0)\right\|\left\|\overline{S}^\dagger\right\|^2 + \frac{L_{\nabla^2 f}}{3}\left\|\overline{S}^\dagger\right\|^2\left(p-\frac{1}{p}+1\right)\Delta_S.
        \end{align*}
        
    \end{theorem}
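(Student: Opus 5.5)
The plan is to mirror the proof of Theorem~\ref{thm:|d(MNMFNHess-Hess)d|}, replacing Lemmas~\ref{lem:|di(MNMFNHess-Hess)di|} and \ref{lem:|di(MNMFNHess-Hess)dj|} by Lemmas~\ref{lem:|di(GSH-Hess)di|} and \ref{lem:|di(GSH-Hess)dj|}.

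\textbf{Step 1: the representation $d=Sv$.} Since $d\in\mathrm{col}(S)$, we have $SS^\dagger d=d$, so $d=Sv=\sum_{i=1}^p v_id^i$ with $v=S^\dagger d$. Write $G:=\nabla_s^2f(x^0;S;T_{1:p})-\nabla^2f(x^0)$. This matrix need not be symmetric, but nothing below uses symmetry. Expanding the quadratic form,
\begin{equation*}
d^\top G d=\sum_{i}v_i^2(d^i)^\top Gd^i+\sum_{i\neq j}v_iv_j(d^i)^\top Gd^j .
\end{equation*}

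\textbf{Step 2: bound each term.}
\begin{itemize}
\item Diagonal terms: Lemma~\ref{lem:|di(GSH-Hess)di|} gives $|(d^i)^\top Gd^i|\le \frac{L_{\nabla^2 f}}{3}\Delta_S\|d^i\|^2\le \frac{L_{\nabla^2 f}}{3}\Delta_S^3$.
\item Off-diagonal terms: Lemma~\ref{lem:|di(GSH-Hess)dj|} gives $|(d^i)^\top Gd^j|\le (\|\nabla^2f(x^0)\|+\frac{L_{\nabla^2 f}}{3}\Delta_S)\|d^i\|\|d^j\|\le \|\nabla^2 f(x^0)\|\Delta_S^2+\frac{L_{\nabla^2 f}}{3}\Delta_S^3$.
\item Zero columns: Assumption~\ref{ass:structuredS&Ti} forces every $d^i$ to be non-zero (full column rank), so both lemmas apply to all $i,j$.
\end{itemize}
Summing, and using the combinatorial bound $\sum_{i\neq j}|v_iv_j|\le \|v\|_1^2-\|v\|_\infty^2$ already proved in Theorem~\ref{thm:|d(MNMFNHess-Hess)d|}, we get
\begin{equation*}
|d^\top Gd|\le \frac{L_{\nabla^2 f}}{3}\|v\|^2\Delta_S^3+\left(\|\nabla^2 f(x^0)\|\Delta_S^2+\frac{L_{\nabla^2 f}}{3}\Delta_S^3\right)(\|v\|_1^2-\|v\|_\infty^2).
\end{equation*}

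\textbf{Step 3: normalize by $\|d\|^2$.} From $\|v\|\le\|S^\dagger\|\|d\|$ we get $1/\|d\|^2\le \|S^\dagger\|^2/\|v\|^2$, and $\|\overline{S}^\dagger\|=\|S^\dagger\|\Delta_S$. Hence $\Delta_S^2/\|d\|^2\le \|\overline{S}^\dagger\|^2/\|v\|^2$. Substituting this into the Step 2 bound gives the first displayed inequality of the theorem; the coefficient $1$ on the $\|\nabla^2f(x^0)\|$ term (versus $4\kappa^{ef}$ before) comes directly from Lemma~\ref{lem:|di(GSH-Hess)dj|}.

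\textbf{Step 4: the uniform bound.} Maximize the ratio $(\|v\|_1^2-\|v\|_\infty^2)/\|v\|^2$ over $v$. Dropping the restriction $v\in\mathrm{col}(S^\top)$, as in Theorem~\ref{thm:|d(MNMFNHess-Hess)d|}, gives an upper bound. Setting $w=v/\|v\|$, this maximum is $\max_{\|w\|=1}(\|w\|_1^2-\|w\|_\infty^2)=p-1/p$, attained at $w=\frac{1}{\sqrt p}[\pm1,\ldots,\pm1]^\top$. Note that $\|Sw\|>0$ for every unit $w$ because $S$ has full column rank, so the nonzero-denominator argument from Theorem~\ref{thm:|d(MNMFNHess-Hess)d|} is not needed. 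Plugging $p-1/p$ into the first bound yields the second display.

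The only step needing care is the reuse of the combinatorial bound together with the evaluation of the maximum as $p-1/p$. Both were established in the proof of Theorem~\ref{thm:|d(MNMFNHess-Hess)d|}, so I expect no genuine obstacle; the rest is bookkeeping of the $\Delta_S$ powers.
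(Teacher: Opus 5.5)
Your proposal is correct and matches the paper's proof. You expand $d^\top G d$ with $d=Sv$, bound the diagonal and off-diagonal terms with Lemmas~\ref{lem:|di(GSH-Hess)di|} and \ref{lem:|di(GSH-Hess)dj|}, apply $\sum_{i\neq j}|v_iv_j|\le\|v\|_1^2-\|v\|_\infty^2$, normalize via $\|v\|\le\|S^\dagger\|\|d\|$ and $\|\overline{S}^\dagger\|=\|S^\dagger\|\Delta_S$, and maximize the ratio to obtain $p-1/p$. You also correctly note that full column rank gives $\mathrm{col}(S^\top)=\mathbb{R}^p$, so no constraint is actually dropped and $Sw\neq 0$ holds automatically, which simplifies the last step.
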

    \begin{proof}
        Using similar proof techniques as in Theorem~\ref{thm:|d(MNMFNHess-Hess)d|}, we obtain
        \begin{align*}
            &~~\left|\left(\frac{d}{\|d\|}\right)^\top\left(\nabla_s^2f(x^0;S;T_{1:p})-\nabla^2f(x^0)\right)\frac{d}{\|d\|}\right|\\
            &\le \frac{\|v\|_1^2-\|v\|_{\infty}^2}{\|d\|^2}\left\|\nabla^2 f(x^0)\right\|\Delta_S^2 + \frac{L_{\nabla^2 f}}{3}\frac{\|v\|^2}{\|d\|^2}\Delta_S^3 + \frac{L_{\nabla^2 f}}{3}\frac{\|v\|_1^2-\|v\|_{\infty}^2}{\|d\|^2}\Delta_S^3.
        \end{align*}
        The remaining proof is the same as that of Theorem~\ref{thm:|d(MNMFNHess-Hess)d|}.
    \end{proof}

    \begin{remark}
        Comparing Lemmas \ref{lem:|di(GSH-Hess)di|}, \ref{lem:|di(GSH-Hess)dj|}, and Theorem \ref{thm:|d(GSH-Hess)d|}, we see that the GSH have directional errors ranging from $\mathcal{O}(1)$ to $\mathcal{O}(\Delta_S)$.  Similar to the Hessian approximations arising from the MN and MFN models, it achieves the same level of accuracy as the Hessian approximation of fully quadratic models along the directions $d^i\slash\|d^i\|$.
    \end{remark}

\section{Relationships among MN, MFN, and QS models}\label{sec:relationships}
    In this section, we explore the relationships among the MN, MFN, and QS models. In particular, we examine the conditions under which they coincide. These results not only deepen our understanding of these models but also provide an alternative, potentially simpler approach to constructing them. 

\subsection{Relationships among MN, MFN, and QS models in the case $T_{1:p}=T$}
    In this subsection, we suppose that all $T_i=T=[t^1\cdots t^q]$ and
    \begin{align*}
            Y=\left\{x^0,x^0+s^i,x^0+t^j,x^0+s^i+t^j:i\in [p],j\in [q]\right\}.
        \end{align*}
    As mentioned in Remark \ref{rem:GSH(S,T)^T=GSH(T,S)}, we have $\nabla_s^2f(x^0;S;T)^\top=\nabla_s^2f(x^0;T;S)$.  Therefore, in this subsection we present results only for $\nabla_s^2f(x^0;S;T)$.  Similar results can be established for $\nabla_s^2f(x^0;T;S)$ by interchanging the roles of $S$ and $T$.
    
    Direct row operations can show that the equality constraints of Problems~\eqref{pro:MN} and~\eqref{pro:MFN} are equivalent to
    \begin{align}\label{constraints:MN/MFN_QS}
        \begin{cases}
            &\left(s^i\right)^\top\alpha+\frac{1}{2}\left(s^i\right)^\top H\left(s^i\right) = f(x^0+s^i)-f(x^0),~~~i\in [p],\\
            &\left(t^j\right)^\top\alpha+\frac{1}{2}\left(t^j\right)^\top H\left(t^j\right) = f(x^0+t^j)-f(x^0),~~~j\in [q],\\
            &S^\top HT=\delta_{\delta_f}(x^0;S;T).
        \end{cases}
    \end{align}
    Consider the following problem that only involves $H$ as a variable:
    \begin{equation}\label{pro:minH_siHsj_allsymmat}
        \min_{H\in \mathbb{R}^{n\times n}}~~~\frac{1}{2}\left\|H\right\|_F^2~~~s.t.~~~S^\top HT=\delta_{\delta_f}(x^0;S;T),~H\in S_n(\mathbb{R}).
    \end{equation}
    Clearly, for all $(\alpha,H)$ feasible for System \eqref{constraints:MN/MFN_QS}, $H$ is feasible for Problem~\eqref{pro:minH_siHsj_allsymmat}.  Therefore, the solution to Problem~\eqref{pro:minH_siHsj_allsymmat} should be closely related to the solution to Problems~\eqref{pro:MN} and \eqref{pro:MFN}.

    We begin by introducing several lemmas that examine the properties of $\nabla_s^2f(x^0;S;T)$.  In particular, we discuss when it is the solution to Problem~\eqref{pro:minH_siHsj_allsymmat}.  As will be shown later, these results are crucial to linking the MN, MFN, and QS models.  The next lemma shows that $\nabla_s^2f(x^0;S;T)$ is always the unique solution to Problem~\eqref{pro:minH_siHsj_allsymmat} when the symmetry requirement is removed.
    \begin{lemma}\label{lem:minH_siHsj}
        Suppose that Problem~\eqref{pro:minH_siHsj_allmat} is feasible.  Then, $\nabla_s^2f(x^0;S;T)$ is the unique solution to
        \begin{equation}\label{pro:minH_siHsj_allmat}
            \min_{H\in \mathbb{R}^{n\times n}}~~~\frac{1}{2}\left\|H\right\|_F^2~~~s.t.~~~S^\top HT=\delta_{\delta_f}(x^0;S;T).
        \end{equation}
        In particular, $\nabla_s^2f(x^0;S;T)$ is the unique solution to Problem~\eqref{pro:minH_siHsj_allsymmat} if and only if Problem~\eqref{pro:minH_siHsj_allsymmat} is feasible and $\nabla_s^2f(x^0;S;T)\in S_n(\mathbb{R})$ (or equivalently, $\nabla_s^2f(x^0;S;T)$ is feasible for Problem~\eqref{pro:minH_siHsj_allsymmat}).
    \end{lemma}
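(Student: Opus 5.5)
The plan is to treat Problem~\eqref{pro:minH_siHsj_allmat} as a minimum Frobenius norm solution of a consistent linear matrix equation and use the standard pseudoinverse characterization. Write $A:=\delta_{\delta_f}(x^0;S;T)$ and consider the linear map $\mathcal{L}(H)=S^\top H T$. Vectorizing, $\mathrm{vec}(S^\top HT)=(T^\top\otimes S^\top)\mathrm{vec}(H)$, and $\|H\|_F=\|\mathrm{vec}(H)\|$. Problem~\eqref{pro:minH_siHsj_allmat} is therefore the minimum Euclidean norm solution of a consistent linear system. Its solution exists and is unique, since the objective is strongly convex on a nonempty affine set. It equals $(T^\top\otimes S^\top)^\dagger\mathrm{vec}(A)$.

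Next I use $(T^\top\otimes S^\top)^\dagger=(T^\top)^\dagger\otimes(S^\top)^\dagger=(T^\dagger)^\top\otimes(S^\top)^\dagger$. This gives $\mathrm{vec}(H^*)=\mathrm{vec}\big((S^\top)^\dagger A T^\dagger\big)$, so $H^*=(S^\top)^\dagger\delta_{\delta_f}(x^0;S;T)T^\dagger=\nabla_s^2f(x^0;S;T)$, by the remark following the GSH definition.

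To avoid the Kronecker machinery I could instead argue directly, and I expect this part to need the most care. Let $H^\circ:=(S^\top)^\dagger A T^\dagger$. First, $H^\circ$ is feasible. Consistency gives $A=S^\top H_0T$ for some $H_0$. Then $S^\top H^\circ T=(S^\top(S^\top)^\dagger)S^\top H_0 T(T^\dagger T)=S^\top H_0 T=A$, using $S^\top(S^\top)^\dagger S^\top=S^\top$ and $TT^\dagger T=T$.

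Second, $H^\circ$ is orthogonal to the kernel of $\mathcal{L}$. We have $H^\circ=SS^\dagger H^\circ T T^\dagger$, because $\mathrm{col}((S^\top)^\dagger)=\mathrm{col}(S)$ and the rows of $T^\dagger$ lie in $\mathrm{col}(T^\top)$. Hence $H^\circ=S M T^\top$ for some $M$, i.e., $H^\circ$ lies in the range of the adjoint $\mathcal{L}^*(M)=SMT^\top$. For any $K$ with $S^\top K T=0$, $\langle H^\circ,K\rangle_F=\mathrm{tr}(TM^\top S^\top K)=\mathrm{tr}(M^\top S^\top K T)=0$. So every feasible $H^\circ+K$ satisfies $\|H^\circ+K\|_F^2=\|H^\circ\|_F^2+\|K\|_F^2$, and $H^\circ$ is the unique minimizer.

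For the ``in particular'' part, note that Problem~\eqref{pro:minH_siHsj_allsymmat} is Problem~\eqref{pro:minH_siHsj_allmat} with the extra constraint $H\in S_n(\mathbb{R})$. Its feasible set is a closed convex subset of the feasible set of Problem~\eqref{pro:minH_siHsj_allmat}.

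If $\nabla_s^2f(x^0;S;T)$ is the unique solution of \eqref{pro:minH_siHsj_allsymmat}, then \eqref{pro:minH_siHsj_allsymmat} is feasible and $\nabla_s^2f(x^0;S;T)$ is symmetric.

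Conversely, suppose \eqref{pro:minH_siHsj_allsymmat} is feasible and $\nabla_s^2f(x^0;S;T)\in S_n(\mathbb{R})$. Then \eqref{pro:minH_siHsj_allmat} is feasible, and its unique global minimizer $\nabla_s^2f(x^0;S;T)$ lies in the smaller feasible set. It is therefore the unique minimizer there as well; any other minimizer would tie in \eqref{pro:minH_siHsj_allmat}, contradicting uniqueness.

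Finally, the parenthetical equivalence holds because $\nabla_s^2f(x^0;S;T)$ always satisfies $S^\top HT=A$ once \eqref{pro:minH_siHsj_allmat} is feasible. In that case, membership in $S_n(\mathbb{R})$ is the same as feasibility for \eqref{pro:minH_siHsj_allsymmat}, and feasibility of \eqref{pro:minH_siHsj_allsymmat} follows automatically.
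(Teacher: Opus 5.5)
Your proposal is correct. Your primary route is genuinely different from the paper's. You vectorize $S^\top HT$ as $(T^\top\otimes S^\top)\mathrm{vec}(H)$, then read off the minimizer from $(A\otimes B)^\dagger=A^\dagger\otimes B^\dagger$ and the minimum-norm property of the pseudoinverse.

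The paper instead writes the Lagrangian and checks that $H^*=\nabla_s^2f(x^0;S;T)$, with multiplier $\Lambda^*=-(S^\dagger H^*(T^\top)^\dagger)^\top$, satisfies $H+S\Lambda^\top T^\top=0$ and $S^\top HT=\delta_{\delta_f}(x^0;S;T)$. It then appeals to KKT sufficiency for a strongly convex objective with linear constraints. The Kronecker route is the quickest way to see where the closed form of the GSH comes from, but it pushes the real work into that standard identity.

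Your fallback ``direct'' argument is essentially the paper's proof made self-contained:
\begin{itemize}
\item your $M$ can be taken to be $S^\dagger H^\circ(T^\top)^\dagger=-(\Lambda^*)^\top$;
\item the paper's stationarity condition says exactly that $H^\circ$ lies in the range of the adjoint $M\mapsto SMT^\top$;
\item your Pythagorean identity $\|H^\circ+K\|_F^2=\|H^\circ\|_F^2+\|K\|_F^2$ for $K$ in the kernel replaces the appeal to KKT sufficiency.
\end{itemize}
Your feasibility check via $A=S^\top H_0T$ is slightly cleaner than the paper's column/row-space phrasing. Your treatment of the ``in particular'' part spells out what the paper calls immediate.

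One small slip: the rows of $T^\dagger$ are vectors in $\mathbb{R}^n$ and lie in $\mathrm{col}(T)$, not in $\mathrm{col}(T^\top)\subseteq\mathbb{R}^q$. The conclusion $H^\circ TT^\dagger=H^\circ$ that you draw from it is nonetheless correct. Since $TT^\dagger=(T^\dagger)^\top T^\top$, it gives $H^\circ=SMT^\top$ as claimed.
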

    \begin{proof}
        Notice that the objective functions of Problems~\eqref{pro:minH_siHsj_allsymmat} and \eqref{pro:minH_siHsj_allmat}  are strongly convex, and all constraints are linear with respect to $H$.  The solution exists and is unique if and only if the problem is feasible. Moreover, the KKT conditions are necessary and sufficient for optimality. 
        
        Let $\Lambda\in\mathbb{R}^{q\times p}$ be the Lagrange multiplier.  The Lagrangian function of Problem~\eqref{pro:minH_siHsj_allmat} is $L(H,\Lambda) = \frac{1}{2}\|H\|_F^2 + \mathrm{tr}((S^\top HT-\delta_{\delta_f}(x^0;S;T))\Lambda)$.  The KKT conditions are $H + S\Lambda^\top T^\top = \mymathbb{0}_{n\times n}$ and $S^\top HT - \delta_{\delta_f}(x^0;S;T) = \mymathbb{0}_{p\times q}$.
        
        We claim that  $(H^*,\Lambda^*)=(\nabla_s^2f(x^0;S;T),-(S^\dagger H^*(T^\top)^\dagger)^\top)$ satisfies the KKT conditions.  Indeed, since $S^\top HT=\delta_{\delta_f}(x^0;S;T)$ is feasible, every column of $\delta_{\delta_f}(x^0;S;T)$ lies in $\mathrm{col}(S^\top)$ and every row of $\delta_{\delta_f}(x^0;S;T)$ lies in $\mathrm{col}(T^\top)$, i.e., $S^\top\left(S^\top\right)^\dagger\delta_{\delta_f}(x^0;S;T) = \delta_{\delta_f}(x^0;S;T)$ and $\delta_{\delta_f}(x^0;S;T)T^\dagger T  = \delta_{\delta_f}(x^0;S;T)$.  Moreover, every column of $H^*$ lies in $\mathrm{col}((S^\top)^\dagger)=\mathrm{col}(S)$ and every row of $H^*$ lies in $\mathrm{col}((T^\dagger)^\top)=\mathrm{col}(T)$, i.e., $S S^\dagger H^* = H^*$ and $H^*\left(T^\top\right)^\dagger T^\top = H^*$.  Therefore, $S^\top H^*T - \delta_{\delta_f}(x^0;S;T) = S^\top(S^\top)^\dagger\delta_{\delta_f}(x^0;S;T)T^\dagger T - \delta_{\delta_f}(x^0;S;T) = \mymathbb{0}_{p\times q}$ and $H^* + S(\Lambda^*)^\top T^\top = H^* - SS^\dagger H^*(T^\top)^\dagger T^\top = \mymathbb{0}_{n\times n}$, i.e., $H^*=\nabla_s^2f(x^0;S;T)$ is a KKT point and therefore the unique solution.  It immediately follows that $\nabla_s^2f(x^0;S;T)$ is the unique solution to Problem~\eqref{pro:minH_siHsj_allsymmat} if and only if Problem~\eqref{pro:minH_siHsj_allsymmat} is feasible and $\nabla_s^2f(x^0;S;T)\in S_n(\mathbb{R})$. 
    \end{proof}

    Lemma~\ref{lem:minH_siHsj} provides a necessary and sufficient condition for $\nabla_s^2f(x^0;S;T)$ to be the unique solution to Problem~\eqref{pro:minH_siHsj_allsymmat}. Now, we provide a detailed characterization of the solution to Problem~\eqref{pro:minH_siHsj_allsymmat}, under an extra assumption on $\mathrm{col}(T)$ and $\mathrm{col}(S)$. 
    \begin{lemma}\label{lem:solutionTOminH_siHsj_allsymmat_colTincolS}
        Suppose that Problem~\eqref{pro:minH_siHsj_allsymmat} is feasible and $\mathrm{col}(T)\subseteq\mathrm{col}(S)$.  The unique solution to Problem~\eqref{pro:minH_siHsj_allsymmat} can be expressed, and in fact must take the form $\nabla_s^2f(x^0;S;T) + (S^\top)^\dagger Z\left(I_n-TT^\dagger\right)\in S_n(\mathbb{R})$, for some $Z\in\mathbb{R}^{p\times n}$.  In particular, if $T$ has full row rank, then $\nabla_s^2f(x^0;S;T)$ is the unique solution to Problem~\eqref{pro:minH_siHsj_allsymmat}.
    \end{lemma}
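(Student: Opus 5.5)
The plan is to use the optimality conditions of Problem~\eqref{pro:minH_siHsj_allsymmat} to show that the solution $H^*$ has its column space inside $\mathrm{col}(S)$. Then $H^*$ can be split along the orthogonal projector onto $\mathrm{col}(T)$. As in the proof of Lemma~\ref{lem:minH_siHsj}, the problem is strongly convex with linear constraints, so feasibility gives a unique solution $H^*$. The KKT conditions are necessary and sufficient, and no constraint qualification is needed because every constraint is linear.

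\emph{Step 1: column space of $H^*$.} Write the symmetry constraint as $H=H^\top$, and use a multiplier $\Lambda\in\mathbb{R}^{q\times p}$ for $S^\top HT=\delta_{\delta_f}(x^0;S;T)$. Equivalently, minimize over $S_n(\mathbb{R})$ and project the gradient onto $S_n(\mathbb{R})$. Stationarity then reads
\begin{equation*}
H^* = -\tfrac{1}{2}\left(S\Lambda^\top T^\top + T\Lambda S^\top\right).
\end{equation*}
The columns of the first term lie in $\mathrm{col}(S)$. The columns of the second term lie in $\mathrm{col}(T)\subseteq\mathrm{col}(S)$. Hence $\mathrm{col}(H^*)\subseteq\mathrm{col}(S)$, that is, $P_SH^*=H^*$, where $P_S:=SS^\dagger=(S^\top)^\dagger S^\top$ is the orthogonal projector onto $\mathrm{col}(S)$. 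This is the only place where the hypothesis $\mathrm{col}(T)\subseteq\mathrm{col}(S)$ enters.

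\emph{Step 2: decomposition.} Let $P_T:=TT^\dagger$, the orthogonal projector onto $\mathrm{col}(T)$. By feasibility and the definition of the GSH,
\begin{equation*}
\nabla_s^2f(x^0;S;T)=(S^\top)^\dagger\delta_{\delta_f}(x^0;S;T)T^\dagger=(S^\top)^\dagger S^\top H^*TT^\dagger=P_SH^*P_T=H^*P_T.
\end{equation*}
Therefore
\begin{equation*}
H^*=H^*P_T+H^*(I_n-P_T)=\nabla_s^2f(x^0;S;T)+P_SH^*(I_n-P_T)=\nabla_s^2f(x^0;S;T)+(S^\top)^\dagger Z(I_n-TT^\dagger),
\end{equation*}
with $Z:=S^\top H^*\in\mathbb{R}^{p\times n}$. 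The phrase ``must take the form'' holds because the argument uses only two facts: feasibility, and $\mathrm{col}(H^*)\subseteq\mathrm{col}(S)$, which every solution satisfies by Step~1. Symmetry of the expression is inherited from $H^*\in S_n(\mathbb{R})$.

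\emph{Step 3: the full row rank case.} If $T$ has full row rank, then $TT^\dagger=I_n$, so the correction term vanishes and $H^*=\nabla_s^2f(x^0;S;T)$. Alternatively, this follows from Lemma~\ref{lem:minH_siHsj}, since the GSH is then symmetric and feasible.

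The main obstacle is Step~1: one must write the stationarity condition correctly in the symmetric-matrix space, i.e.\ with the symmetrized multiplier term. Once that is in place, the remaining steps are projector identities.
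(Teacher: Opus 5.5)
Your proof is correct and follows essentially the paper's route. The symmetrized stationarity condition gives $\mathrm{col}(H^*)\subseteq\mathrm{col}(S)$, that is, $(S^\top)^\dagger S^\top H^*=H^*$, and splitting along $TT^\dagger$ then produces the required form. The one cosmetic difference is that the paper invokes the general solution $\delta_{\delta_f}(x^0;S;T)T^\dagger+Z(I_n-TT^\dagger)$ of the consistent equation $XT=\delta_{\delta_f}(x^0;S;T)$ with $X=S^\top H$, whereas you exhibit the explicit choice $Z=S^\top H^*$.
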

    \begin{proof}
        As mentioned in the proof of Lemma~\ref{lem:minH_siHsj}, the solution to Problem~\eqref{pro:minH_siHsj_allsymmat} exists and is unique if and only if the problem is feasible.  Moreover, the KKT conditions are necessary and sufficient conditions for optimality.
        
        Let $\Lambda_1\in\mathbb{R}^{q\times p}$ and $\Lambda_2\in\mathbb{R}^{n\times n}$.  The Lagrangian function is 
        \begin{equation*}
            L(H,\Lambda_1,\Lambda_2) = \frac{1}{2}\left\|H\right\|_F^2 + \mathrm{tr}\left(\left(S^\top HT-\delta_{\delta_f}(x^0;S;T)\right)\Lambda_1\right) + \mathrm{tr}\left(\left(H- H^\top\right)\Lambda_2\right).
        \end{equation*}
        The KKT conditions are $H + S\Lambda_1^\top T^\top + \Lambda_2^\top - \Lambda_2 = \mymathbb{0}_{n\times n}$, $S^\top HT=\delta_{\delta_f}(x^0;S;T)$, and $H=H^\top$.  Since $H$ is symmetric, we separate the symmetric and skew parts from the first KKT condition and get \begin{align}
            &H = -\frac{1}{2}\left(S\Lambda_1^\top T^\top + T\Lambda_1S^\top\right)\label{eq:H*_allsymmat}\\
            &\frac{1}{2}\left(S\Lambda_1^\top T^\top - T\Lambda_1S^\top\right) = \Lambda_2 - \Lambda_2^\top.\label{eq:Lambda2*_allsymmat}
        \end{align}
        The second KKT condition gives $-\frac{1}{2}S^\top(S\Lambda_1^\top T^\top + T\Lambda_1S^\top)T = \delta_{\delta_f}(x^0;S;T)$, i.e., $\Lambda_1$ is the solution to
        \begin{equation}\label{eq:Lambda1*_allsymmat}
            -\frac{1}{2}\left(S^\top S\Lambda_1^\top T^\top T + S^\top T\Lambda_1 S^\top T\right) = \delta_{\delta_f}(x^0;S;T).
        \end{equation}
        The KKT conditions of Problem~\eqref{pro:minH_siHsj_allsymmat} are equivalent to Equations~\eqref{eq:H*_allsymmat}, \eqref{eq:Lambda2*_allsymmat}, and  \eqref{eq:Lambda1*_allsymmat}.  Equation~\eqref{eq:Lambda2*_allsymmat} is always feasible by letting $\Lambda_2=\frac{1}{2}S\Lambda_1^\top T^\top$.

        Since $\mathrm{col}(T)\subseteq\mathrm{col}(S)$, we have $T=SM$ for some matrix $M\in\mathbb{R}^{p\times q}$.  Then, Equation~\eqref{eq:H*_allsymmat} implies that all columns of $H$ lie in $\mathrm{col}(S)=\mathrm{col}((S^\top)^\dagger)$, i.e., $(S^\top)^\dagger S^\top H=H$.  Moreover, Equation~\eqref{eq:Lambda1*_allsymmat} becomes
        \begin{equation*}
            -\frac{1}{2}S^\top S\left(\Lambda_1^\top M^\top + M\Lambda_1\right)S^\top T = \delta_{\delta_f}(x^0;S;T),
        \end{equation*}
        with the general solution in the form
        \begin{equation*}
            -\frac{1}{2}S^\top S\left(\Lambda_1^\top M^\top + M\Lambda_1\right)S^\top = \delta_{\delta_f}(x^0;S;T)T^\dagger + Z\left(I_n-TT^\dagger\right),
        \end{equation*}
        where $Z\in\mathbb{R}^{p\times n}$.  Since the solution to Problem~\eqref{pro:minH_siHsj_allsymmat} exists, there must exist $Z$ and $\Lambda_1$ such that the equation above is feasible and so Equation~\eqref{eq:H*_allsymmat} gives
        \begin{align*}
            H = \left(S^\top\right)^\dagger S^\top H &= -\frac{1}{2}\left(S^\top\right)^\dagger S^\top S\left(\Lambda_1^\top M^\top + M\Lambda_1\right)S^\top\\
            &= \nabla_s^2f(x^0;S;T) + (S^\top)^\dagger Z\left(I_n-TT^\dagger\right),
        \end{align*}
        i.e., $H=\nabla_s^2f(x^0;S;T) + (S^\top)^\dagger Z\left(I_n-TT^\dagger\right)$ is a KKT point and thus is the unique solution.  The proof is complete by noticing that if $T$ has full row rank, then $I_n-TT^\dagger=\mymathbb{0}_{n\times n}$. 
    \end{proof}
    
    The special case in which $\nabla_s^2 f(x^0; S; T)$ is the unique solution to Problem~\eqref{pro:minH_siHsj_allsymmat}, as given in Lemma~\ref{lem:solutionTOminH_siHsj_allsymmat_colTincolS}, in fact imply that $\mathrm{col}(S)=\mathrm{col}(T)$.  More generally, we can show that if Problem~\eqref{pro:minH_siHsj_allsymmat} is feasible and $\mathrm{col}(S)=\mathrm{col}(T)$, then $\nabla_s^2 f(x^0; S; T)$ is the unique solution to Problem~\eqref{pro:minH_siHsj_allsymmat}, i.e., the result holds without any full rank assumption on $T$.
    \begin{corollary}\label{cor:GSHisH*_minH_siHsj_allsymmat}
        Suppose Problem~\eqref{pro:minH_siHsj_allsymmat} is feasible and $\mathrm{col}(T)=\mathrm{col}(S)$.  Then, $\nabla_s^2f(x^0;S;T)$ must be the unique solution to Problem~\eqref{pro:minH_siHsj_allsymmat}. 
    \end{corollary}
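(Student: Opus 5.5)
By Lemma~\ref{lem:minH_siHsj}, it suffices to show that $\nabla_s^2f(x^0;S;T)\in S_n(\mathbb{R})$. Feasibility of Problem~\eqref{pro:minH_siHsj_allsymmat} then makes $\nabla_s^2f(x^0;S;T)$ its unique solution. To get symmetry, I would take the unique solution $H^*$ of Problem~\eqref{pro:minH_siHsj_allsymmat}, which exists by feasibility and strong convexity. I would then show that $H^*$ coincides with $\nabla_s^2f(x^0;S;T)$.

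Since $\mathrm{col}(T)\subseteq\mathrm{col}(S)$, Lemma~\ref{lem:solutionTOminH_siHsj_allsymmat_colTincolS} applies and gives $H^*=\nabla_s^2f(x^0;S;T)+(S^\top)^\dagger Z(I_n-TT^\dagger)$ for some $Z$. Recall that $\nabla_s^2f(x^0;S;T)=(S^\top)^\dagger\delta_{\delta_f}(x^0;S;T)T^\dagger$. Its rows lie in $\mathrm{col}(T)$, so $\nabla_s^2f(x^0;S;T)\,TT^\dagger=\nabla_s^2f(x^0;S;T)$. The correction term satisfies $(S^\top)^\dagger Z(I_n-TT^\dagger)\,TT^\dagger=\mymathbb{0}$, because $TT^\dagger$ is the orthogonal projector onto $\mathrm{col}(T)$. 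Multiplying on the right by $TT^\dagger$ therefore gives $H^*TT^\dagger=\nabla_s^2f(x^0;S;T)$.

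It remains to show that $H^*TT^\dagger=H^*$, i.e., that the rows of $H^*$ lie in $\mathrm{col}(T)$. Here I would use the reverse inclusion $\mathrm{col}(S)\subseteq\mathrm{col}(T)$. The first KKT equation \eqref{eq:H*_allsymmat} gives $H^*=-\tfrac12(S\Lambda_1^\top T^\top+T\Lambda_1S^\top)$. The term $S\Lambda_1^\top T^\top$ has rows in $\mathrm{col}(T)$. The term $T\Lambda_1S^\top$ has rows in $\mathrm{col}(S)=\mathrm{col}(T)$. Hence every row of $H^*$ lies in $\mathrm{col}(T)$, and $H^*(I_n-TT^\dagger)=\mymathbb{0}$. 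Alternatively, the proof of Lemma~\ref{lem:solutionTOminH_siHsj_allsymmat_colTincolS} already shows $\mathrm{col}(H^*)\subseteq\mathrm{col}(S)$. Symmetry of $H^*$ then puts its rows in $\mathrm{col}(S)=\mathrm{col}(T)$, which avoids invoking the multipliers.

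Combining the two steps gives $H^*=H^*TT^\dagger=\nabla_s^2f(x^0;S;T)$. So $\nabla_s^2f(x^0;S;T)$ is symmetric and is the unique solution, as claimed.

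The main obstacle is the identity $H^*TT^\dagger=H^*$. The row-space argument handles it cleanly, but it needs care: one must use that $TT^\dagger$ is the orthogonal projector onto $\mathrm{col}(T)$, and that $\mathrm{col}((S^\top)^\dagger)=\mathrm{col}(S)$ for the column-space fact being transposed.
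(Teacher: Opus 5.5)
Your proposal is correct and follows essentially the same route as the paper: invoke Lemma~\ref{lem:solutionTOminH_siHsj_allsymmat_colTincolS} for the form $\nabla_s^2f(x^0;S;T)+(S^\top)^\dagger Z(I_n-TT^\dagger)$, use Equation~\eqref{eq:H*_allsymmat} with $\mathrm{col}(S)=\mathrm{col}(T)$ to get $H^*TT^\dagger=H^*$, and right-multiply by $TT^\dagger$ to eliminate the correction term. Your preliminary reduction to symmetry via Lemma~\ref{lem:minH_siHsj} is harmless but unnecessary, since showing $H^*=\nabla_s^2f(x^0;S;T)$ already gives the claim.
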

    \begin{proof}
        Notice that we have $\mathrm{col}(T)\subseteq\mathrm{col}(S)$. By Lemma~\ref{lem:solutionTOminH_siHsj_allsymmat_colTincolS}, the unique solution to Problem~\eqref{pro:minH_siHsj_allsymmat} has the form $H=\nabla_s^2f(x^0;S;T) + (S^\top)^\dagger Z\left(I_n-TT^\dagger\right)$ for some $Z\in\mathbb{R}^{p\times n}$.  Now, since $\mathrm{col}(T)=\mathrm{col}(S)$, Equation~\eqref{eq:H*_allsymmat} implies that $HTT^\dagger=H$.  Using the identity $T^{\dagger} T T^{\dagger}=T^{\dagger}$, we obtain
        \begin{align*}
            H = \nabla_s^2f(x^0;S;T)TT^\dagger + (S^\top)^\dagger Z\left(I_n-TT^\dagger\right)TT^\dagger = \nabla_s^2f(x^0;S;T).
        \end{align*}
    \end{proof}

    The results of Lemmas~\ref{lem:minH_siHsj}, \ref{lem:solutionTOminH_siHsj_allsymmat_colTincolS}, and Corollary \ref{cor:GSHisH*_minH_siHsj_allsymmat} are shown in Table \ref{tab:sols_minH_siHsj_allmat&allsymmat}.
    \begin{table}[htbp]
    \centering
    \caption{Summary of the unique solution to Problems~\eqref{pro:minH_siHsj_allsymmat} and \eqref{pro:minH_siHsj_allmat}} 
    \label{tab:sols_minH_siHsj_allmat&allsymmat}
    \begin{tabular}{c|c}
    \hline
        Assumption &  Unique Solution\\ \hline
        \begin{tabular}{c}
            \eqref{pro:minH_siHsj_allsymmat} is feasible and \\
            $\nabla_s^2f(x^0;S;T)\in S_n(\mathbb{R})$
        \end{tabular} & $\nabla_s^2f(x^0;S;T)$ \\ \hline
        \begin{tabular}{c}
            \eqref{pro:minH_siHsj_allsymmat} is feasible and \\
            $\mathrm{col}(T)\subseteq\mathrm{col}(S)$
        \end{tabular} & $\nabla_s^2f(x^0;S;T) + \left(S^\top\right)^\dagger Z\left(I_n-TT^\dagger\right)$ \\ \hline
        \begin{tabular}{c}
            \eqref{pro:minH_siHsj_allsymmat} is feasible and \\
            $\mathrm{col}(T)=\mathrm{col}(S)$
        \end{tabular} & $\nabla_s^2f(x^0;S;T)$ \\ \hline
        \eqref{pro:minH_siHsj_allmat} is feasible & $\nabla_s^2f(x^0;S;T)$ \\ \hline
    \end{tabular}
    \end{table}

    Now, we discuss some special structures of $T$ that make $\nabla_s^2f(x^0;S;T)$ feasible for Problem~\eqref{pro:minH_siHsj_allsymmat}, which implies that $\nabla_s^2f(x^0;S;T)$ is the unique solution to Problem~\eqref{pro:minH_siHsj_allsymmat}. In fact, if $T$ has these structures, then the feasibility is independent of $f$.
    \begin{example}\label{exp:symdeltadeltafMinv_specialT}
        Suppose that $S$ has full column rank.  Let $\ell\in\{0,1,\ldots,p\}$.  Define $U_S^0:=S$ and $U_S^\ell:=[s^1-s^\ell\cdots s^{\ell-1}-s^\ell~~-s^\ell~~s^{\ell+1}-s^\ell\cdots s^p-s^\ell]$ for $\ell>0$.  If $T=U_S^\ell$, then for all functions $f:\mathbb{R}^n\to\mathbb{R}$, $\nabla_s^2f(x^0;S;T)$ is the unique solution to Problem~\eqref{pro:minH_siHsj_allsymmat}.
    \end{example}
    \begin{proof}
        Combining \cite[Prop.~5.7]{hare2024matrix} with \cite[Thm.~5]{chen2026relationships}, we obtain that there exists $\alpha\in\mathbb{R}^n$ such that $(\alpha,\nabla_s^2f(x^0;S;T))$ is feasible for System \eqref{constraints:MN/MFN_QS}.  Therefore, $\nabla_s^2f(x^0;S;T)$ is feasible for Problem~\eqref{pro:minH_siHsj_allsymmat}, and so is the unique solution.  
    \end{proof}

    The next lemma, inspired by \cite[Prop.~5.5]{hare2024matrix}, discusses how certain transformations on $S$ and $T$ influence the symmetry of $\nabla_s^2f(x^0;S;T)$ and its feasibility for Problem~\eqref{pro:minH_siHsj_allsymmat}.
    \begin{lemma}\label{lem:symdeltadeltafMinv_NSP&NTP}
        Let $N\in\mathbb{R}^{n\times n}$ be any orthogonal matrix.  Let $P_1\in\mathbb{R}^{p\times p}$ and $P_2\in\mathbb{R}^{q\times q}$ be any permutation matrices. Define $\widetilde{S}:=NSP_1$, $\widetilde{T}:=NTP_2$, and $\widetilde{f}(x):=f(x^0+N^{-1}(x-x^0))$.  If $\nabla_s^2f(x^0;S;T)$ is symmetric, then $\nabla_s^2\widetilde{f}(x^0;\widetilde{S};\widetilde{T})$ is symmetric.  Moreover, if $\nabla_s^2f(x^0;S;T)$ is feasible for Problem~\eqref{pro:minH_siHsj_allsymmat}, then $\nabla_s^2\widetilde{f}(x^0;\widetilde{S};\widetilde{T})$ is feasible for Problem~\eqref{pro:minH_siHsj_allsymmat} defined on $\widetilde{S}$, $\widetilde{T}$, and $\widetilde{f}$.  
    \end{lemma}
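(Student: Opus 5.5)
The plan is to prove a transformation formula,
\begin{equation*}
\nabla_s^2\widetilde{f}(x^0;\widetilde{S};\widetilde{T}) = N\,\nabla_s^2f(x^0;S;T)\,N^\top,
\end{equation*}
from which both claims follow. Symmetry is preserved under orthogonal congruence. Feasibility also transfers, because the constraint data transform compatibly, as shown below.

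\textbf{Step 1 (second differences).} First compute $\delta_{\delta_{\widetilde f}}(x^0;\widetilde S;\widetilde T)$. The $(i,j)$ entry involves $\widetilde f(x^0+Ns^{\pi(i)}+Nt^{\sigma(j)})$ and similar terms, where $\pi,\sigma$ are the permutations encoded by $P_1,P_2$. Since $\widetilde f(x^0+Nd)=f(x^0+d)$, each entry equals the corresponding entry of $\delta_{\delta_f}(x^0;S;T)$ with permuted indices. Hence
\begin{equation*}
\delta_{\delta_{\widetilde f}}(x^0;\widetilde S;\widetilde T)=P_1^\top\,\delta_{\delta_f}(x^0;S;T)\,P_2 .
\end{equation*}
I will double-check the orientation of the permutations here. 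This is the fiddliest step, though it is routine.

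\textbf{Step 2 (pseudoinverses).} Use the identity $(NSP_1)^\dagger = P_1^\top S^\dagger N^\top$, which holds for orthogonal $N$ and permutation $P_1$ because both are orthogonal; it is verified directly from the four Penrose conditions. Similarly $(\widetilde S^\top)^\dagger = N(S^\top)^\dagger P_1$ and $\widetilde T^\dagger=P_2^\top T^\dagger N^\top$. Then, by the Remark following the GSH definition,
\begin{equation*}
\nabla_s^2\widetilde f(x^0;\widetilde S;\widetilde T) = N(S^\top)^\dagger P_1P_1^\top\,\delta_{\delta_f}\,P_2P_2^\top T^\dagger N^\top = N\nabla_s^2 f(x^0;S;T)N^\top .
\end{equation*}

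\textbf{Step 3 (conclusions).} If $\nabla_s^2f(x^0;S;T)$ is symmetric, then so is $N\nabla_s^2f(x^0;S;T)N^\top$. For feasibility, write $H=\nabla_s^2f(x^0;S;T)$ and suppose $H$ is symmetric with $S^\top HT=\delta_{\delta_f}(x^0;S;T)$. Then
\begin{equation*}
\widetilde S^\top (NHN^\top)\widetilde T = P_1^\top S^\top N^\top N H N^\top N T P_2 = P_1^\top\delta_{\delta_f}P_2 = \delta_{\delta_{\widetilde f}}(x^0;\widetilde S;\widetilde T),
\end{equation*}
and $NHN^\top$ is symmetric. So $NHN^\top$ is feasible for Problem~\eqref{pro:minH_siHsj_allsymmat} defined on $\widetilde S,\widetilde T,\widetilde f$.

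The main risk is bookkeeping in Step 1, namely whether the row permutation is $P_1$ or $P_1^\top$. Either way the permutation factors cancel in Step 2, so the argument is robust to that choice.
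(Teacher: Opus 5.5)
Your proof is correct and follows essentially the same route as the paper. You permute the second-difference matrix to get $\delta_{\delta_{\widetilde f}}(x^0;\widetilde S;\widetilde T)=P_1^\top\delta_{\delta_f}(x^0;S;T)P_2$, use orthogonal invariance of the pseudoinverse to obtain $\nabla_s^2\widetilde f(x^0;\widetilde S;\widetilde T)=N\nabla_s^2f(x^0;S;T)N^\top$, and then check the linear constraint. For that last step you substitute $NHN^\top$ directly, whereas the paper goes through the projectors $\widetilde S^\top(\widetilde S^\top)^\dagger$ and $\widetilde T^\dagger\widetilde T$; this is the same computation.

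One correction: your closing claim that the argument is robust to the orientation of the permutations is false. With $P_1\delta_{\delta_f}P_2^\top$, the products $P_1P_1$ and $P_2^\top P_2^\top$ in Step 2 would not cancel in general. However, the orientation you actually derived, $P_1^\top\delta_{\delta_f}P_2$, is the correct one, so the proof stands.
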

    \begin{proof}
        Suppose that $\nabla_s^2f(x^0;S;T)=(S^\top)^\dagger\delta_{\delta_f}(x^0;S;T)T^\dagger$ is symmetric.  By definition, we have that $\delta_{\delta_{\widetilde{f}}}(x^0;\widetilde{S};\widetilde{T})=\delta_{\delta_f}(x^0;SP_1;TP_2)$.  Suppose that $SP_1=[s^{k_1}\cdots s^{k_p}]$ and $TP_2=[t^{l_1}\cdots t^{l_q}]$.  Then, direction computation can show that $(\delta_{\delta_f}(x^0;SP_1;TP_2))_{ij}=(\delta_{\delta_f}(x^0;S;T))_{k_il_j}$.  This means we have $\delta_{\delta_f}(x^0;SP_1;TP_2)=P_1^\top\delta_{\delta_f}(x^0;S;T)P_2$ and so
        \begin{align*}
            \nabla_s^2\widetilde{f}(x^0;\widetilde{S};\widetilde{T}) = \left(\widetilde{S}^\top\right)^\dagger\delta_{\delta_f}(x^0;SP_1;TP_2)\widetilde{T}^\dagger = N\left(S^\top\right)^\dagger \delta_{\delta_f}(x^0;S;T)T^\dagger N^\top,
        \end{align*}
        is symmetric.  Now, suppose that $\nabla_s^2f(x^0;S;T)$ is feasible for Problem~\eqref{pro:minH_siHsj_allsymmat}.  Notice that $\widetilde{S}^\top(\widetilde{S}^\top)^\dagger=P_1^\top S^\top N^{-1}N(S^\top)^\dagger P_1=P_1^\top S^\top(S^\top)^\dagger P_1$ and, similarly,  $\widetilde{T}^\dagger\widetilde{T}=P_2^\top T^\dagger TP_2$.  We obtain
        \begin{align*}
            &~~\widetilde{S}^\top\nabla_s^2\widetilde{f}(x^0;\widetilde{S};\widetilde{T})\widetilde{T}\\
            &= P_1^\top S^\top\left(S^\top\right)^\dagger P_1\delta_{\delta_f}(x^0;SP_1;TP_2)P_2^\top T^\dagger TP_2\\
            &= P_1^\top S^\top\left(S^\top\right)^\dagger \delta_{\delta_f}(x^0;S;T)T^\dagger TP_2 = P_1^\top S^\top\nabla_s^2f(x^0;S;T) TP_2\\
            &= P_1^\top\delta_{\delta_f}(x^0;S;T)P_2 = \delta_{\delta_f}(x^0;SP_1;TP_2) = \delta_{\delta_{\widetilde{f}}}(x^0;\widetilde{S};\widetilde{T}),
        \end{align*}
        and so $\nabla_s^2\widetilde{f}(x^0;\widetilde{S};\widetilde{T})$ is feasible for Problem~\eqref{pro:minH_siHsj_allsymmat} defined on $\widetilde{S}$, $\widetilde{T}$, and $\widetilde{f}$.  
    \end{proof}

\subsubsection{Constructing MN and MFN models from GSG and GSH in the case $\mathrm{col}(T)=\mathrm{col}(S)$}\label{subsubsec:QS=MN&MFN_colT=colS}
    In this subsection, we specifically consider the case where $\mathrm{col}(T)=\mathrm{col}(S)$.  We have proved that in this case, if Problem~\eqref{pro:minH_siHsj_allsymmat} is feasible, then $\nabla^2f(x^0;S;T)$ is the unique solution to Problem~\eqref{pro:minH_siHsj_allsymmat}.  Based on this, we provide exact formulas to construct MN and MFN models using the GSG and GSH.  We also discuss when the MN, MFN, and QS models coincide.
    
    The following theorem characterizes the relationship among the GSG, GSH, and MN models. In particular, $\nabla_{\mathrm{MN}}^2 f(x^0; Y)$ coincides exactly with the GSH of $f$ at $x^0$ over $S$ and $T$, while $\nabla_{\mathrm{MN}} f(x^0; Y)$ corresponds to the GSG of $f$ at $x^0$ over $S$, augmented by an additional correction term.
    \begin{theorem}\label{thm:QS=MN_colT=colS}
        Suppose Problem~\eqref{pro:MN} is feasible and $\mathrm{col}(T)=\mathrm{col}(S)$. Then, we have $\nabla_{\mathrm{MN}}f(x^0;Y) = \nabla_s f(x^0;S)-\frac{1}{2}(S^\top)^\dagger\mathrm{diag}(\delta_{\delta_f}(x^0;S;T)T^\dagger S)$ and $\nabla_{\mathrm{MN}}^2f(x^0;Y) = \nabla_s^2f(x^0;S;T)$.
    \end{theorem}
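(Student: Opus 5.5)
Throughout, write $H^\star:=\nabla_s^2f(x^0;S;T)$. The plan is to solve Problem~\eqref{pro:MN} by splitting its variables, using the reformulated constraint System~\eqref{constraints:MN/MFN_QS}. The key observation is that, once $H$ is fixed, the constraints on $\alpha$ are linear: $S^\top\alpha = \delta_f(x^0;S)-\frac12\mathrm{diag}(S^\top HS)$ and $T^\top\alpha=\delta_f(x^0;T)-\frac12\mathrm{diag}(T^\top HT)$. Since $\mathrm{col}(T)=\mathrm{col}(S)$, we have $T=SM$ for some $M$. I would first show that, for any $H$ satisfying $S^\top HT=\delta_{\delta_f}(x^0;S;T)$, the $T$-block of constraints is implied by the $S$-block. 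The argument goes through the constraints on $x^0+s^i+t^j$, which are consistent because Problem~\eqref{pro:MN} is feasible. Concretely, take any feasible pair $(\hat\alpha,\hat H)$. Then every feasible $H$ differs from $\hat H$ by a symmetric $E$ with $S^\top E T=0$. Because $\mathrm{col}(T)=\mathrm{col}(S)$, this forces $S^\top E S=0$ and $T^\top E T=0$. Hence $\mathrm{diag}(S^\top HS)$ and $\mathrm{diag}(T^\top HT)$ are the same for all feasible $H$.

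The key step is to decouple the problem. Since those diagonals are invariant over the feasible set, the feasible set of~\eqref{pro:MN} becomes a product: $\{\alpha: S^\top\alpha=c\}\times\{H\in S_n(\mathbb{R}): S^\top HT=\delta_{\delta_f}(x^0;S;T)\}$. Here $c=\delta_f(x^0;S)-\frac12\mathrm{diag}(S^\top\hat HS)$ is a fixed vector, and the $T$-constraints on $\alpha$ are redundant. To see the redundancy, note that $T^\top\alpha=M^\top S^\top\alpha$ is then determined, and it agrees at the feasible point $\hat\alpha$. The objective separates as $\frac12\|\alpha\|^2+\frac12\|H\|_F^2$, so the two minimizations are independent.

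For the $H$-part, Corollary~\ref{cor:GSHisH*_minH_siHsj_allsymmat} applies, because Problem~\eqref{pro:minH_siHsj_allsymmat} is feasible (with $\hat H$ feasible). It gives $\nabla^2_{\mathrm{MN}}f(x^0;Y)=H^\star$.

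For the $\alpha$-part, the minimum norm solution of $S^\top\alpha=c$ is $(S^\top)^\dagger c$, with $c$ evaluated at $H=H^\star$. This gives $(S^\top)^\dagger\delta_f(x^0;S)-\frac12(S^\top)^\dagger\mathrm{diag}(S^\top H^\star S)$. It remains to show $(S^\top)^\dagger\mathrm{diag}(S^\top H^\star S)=(S^\top)^\dagger\mathrm{diag}(\delta_{\delta_f}(x^0;S;T)T^\dagger S)$. Since $S^\top H^\star=S^\top(S^\top)^\dagger\delta_{\delta_f}(x^0;S;T)T^\dagger$, it suffices to show $S^\top(S^\top)^\dagger\delta_{\delta_f}(x^0;S;T)=\delta_{\delta_f}(x^0;S;T)$. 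This holds because the columns of $\delta_{\delta_f}(x^0;S;T)$ lie in $\mathrm{col}(S^\top)$ by feasibility, exactly as in the proof of Lemma~\ref{lem:minH_siHsj}. The identity then holds even before applying $(S^\top)^\dagger$.

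I expect the main obstacle to be the invariance claim, namely that $S^\top ES=0$ and $T^\top ET=0$ follow from $S^\top ET=0$ when $\mathrm{col}(S)=\mathrm{col}(T)$. I would handle it directly. From $S^\top E T=0$, the matrix $E$ maps $\mathrm{col}(T)=\mathrm{col}(S)$ into $\mathrm{col}(S)^\perp$. Writing $S=TN$ for some $N$, we get $S^\top ES=S^\top ETN=0$, and similarly $T^\top ET=M^\top S^\top ET=0$. With this in hand, the remaining steps are bookkeeping.
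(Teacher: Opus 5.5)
Your proof is correct and follows essentially the same route as the paper. You pass to System~\eqref{constraints:MN/MFN_QS}, use $\mathrm{col}(T)=\mathrm{col}(S)$ to remove $H$ from the $\alpha$-constraints so that the $T$-block becomes redundant and Problem~\eqref{pro:MN} separates, then apply Corollary~\ref{cor:GSHisH*_minH_siHsj_allsymmat} for $H$ and take the minimum norm solution of $S^\top\alpha=c$. The only difference is cosmetic: the paper gets $S^\top HS=\delta_{\delta_f}(x^0;S;T)T^\dagger S$ directly for every $H$ with $S^\top HT=\delta_{\delta_f}(x^0;S;T)$, whereas you prove invariance via differences $E$ and evaluate at $H^\star$ separately, also making explicit the redundancy of the $T$-block that the paper only asserts.
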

    \begin{proof}
        Since $\mathrm{col}(S)=\mathrm{col}(T)$, there exist $M_1\in\mathbb{R}^{p\times q}$ and $M_2\in\mathbb{R}^{q\times p}$ such that $T=SM_1$ and $S=TM_2$.  We take $M_1=S^\dagger T$ and $M_2=T^\dagger S$. The last equality in System \eqref{constraints:MN/MFN_QS} is $S^\top HT=S^\top HSM_1=M_2^\top T^\top HT=\delta_{\delta_f}(x^0;S;T)$, which gives
        \begin{align*}
            S^\top HS &= M_2^\top T^\top HTM_2=\delta_{\delta_f}(x^0;S;T)M_2=\delta_{\delta_f}(x^0;S;T)T^\dagger S\\
            T^\top HT &= M_1^\top S^\top HSM_1=M_1^\top\delta_{\delta_f}(x^0;S;T)=T^\top\left(S^\top\right)^\dagger\delta_{\delta_f}(x^0;S;T).
        \end{align*}
        Using these equations to cancel the term containing $H$ in the remaining equalities in System \eqref{constraints:MN/MFN_QS}, we obtain that System \eqref{constraints:MN/MFN_QS} is equivalent to
        \begin{align*}
        \begin{cases}
            &S^\top\alpha = \delta_f(x^0;S) - \frac{1}{2}\mathrm{diag}\left(\delta_{\delta_f}(x^0;S;T)T^\dagger S\right),\\
            &T^\top\alpha = \delta_f(x^0;T) - \frac{1}{2}\mathrm{diag}\left(T^\top\left(S^\top\right)^\dagger\delta_{\delta_f}(x^0;S;T)\right),\\
            &S^\top HT = \delta_{\delta_f}(x^0;S;T).
        \end{cases}
        \end{align*}
        Since $\mathrm{col}(S)=\mathrm{col}(T)$ and  System \eqref{constraints:MN/MFN_QS} is feasible, the first two constraints in the system above must be equivalent, and Problem~\eqref{pro:minH_siHsj_allsymmat} is feasible.  Thus, $\nabla_{\mathrm{MN}}^2f(x^0;Y)$ is the solution to Problem~\eqref{pro:minH_siHsj_allsymmat} and $\nabla_{\mathrm{MN}}f(x^0;Y)$ is the minimum norm solution to $S^\top\alpha = \delta_f(x^0;S) - \frac{1}{2}\mathrm{diag}(\delta_{\delta_f}(x^0;S;T)T^\dagger S)$. Therefore, we have $\nabla_{\mathrm{MN}}^2f(x^0;Y) = \nabla_s^2f(x^0;S;T)$ by Corollary \ref{cor:GSHisH*_minH_siHsj_allsymmat}, and
        \begin{align*}
            \nabla_{\mathrm{MN}}f(x^0;Y) &= \left(S^\top\right)^\dagger\delta_f(x^0;S)-\frac{1}{2}\left(S^\top\right)^\dagger\mathrm{diag}\left(\delta_{\delta_f}(x^0;S;T)T^\dagger S\right)\\
            &= \nabla_s f(x^0;S)-\frac{1}{2}\left(S^\top\right)^\dagger\mathrm{diag}\left(\delta_{\delta_f}(x^0;S;T)T^\dagger S\right).
        \end{align*}
    \end{proof}

    Following the same proof as that of Theorem~\ref{thm:QS=MN_colT=colS}, we obtain similar results for the MFN model.  A key distinction, however, is that Corollary \ref{cor:GSHisH*_minH_siHsj_allsymmat} guarantees only the uniqueness of $\nabla_{\mathrm{MFN}}^2 f(x^0; Y)$.  In contrast, the uniqueness of $\nabla_{\mathrm{MFN}} f(x^0; Y)$ requires an additional assumption, namely that $Y$ is poised for MFN interpolation.

    \begin{theorem}\label{thm:QS=MFN_colT=colS}
        Suppose Problem~\eqref{pro:MFN} is feasible and $\mathrm{col}(T)=\mathrm{col}(S)$. Then, $\nabla_{\mathrm{MFN}}^2f(x^0;Y) = \nabla_s^2f(x^0;S;T)$ and $\nabla_{\mathrm{MFN}}f(x^0;Y)$ is any vector such that $(\nabla_{\mathrm{MFN}}f(x^0;Y),\nabla_s^2f(x^0;S;T))$ is feasible.  If $Y$ is poised for MFN interpolation, then $\nabla_{\mathrm{MFN}}f(x^0;Y) = \nabla_s f(x^0;S)-\frac{1}{2}(S^\top)^\dagger\mathrm{diag}(\delta_{\delta_f}(x^0;S;T)T^\dagger S)$.
    \end{theorem}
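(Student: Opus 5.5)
The plan is to mirror the proof of Theorem~\ref{thm:QS=MN_colT=colS} step by step. The only place where it must change is the final step, where the objective $\frac{1}{2}\|H\|_F^2$ no longer involves $\alpha$. First I would write $T=SM_1$ and $S=TM_2$ with $M_1=S^\dagger T$ and $M_2=T^\dagger S$. Then I would use the third equation of System~\eqref{constraints:MN/MFN_QS} to express $S^\top HS=\delta_{\delta_f}(x^0;S;T)T^\dagger S$ and $T^\top HT=T^\top(S^\top)^\dagger\delta_{\delta_f}(x^0;S;T)$. Substituting these removes $H$ from the first two blocks of constraints, exactly as before. Hence the feasible set of Problem~\eqref{pro:MFN} is the product of two sets:
\begin{equation*}
A:=\left\{\alpha: S^\top\alpha = \delta_f(x^0;S)-\tfrac{1}{2}\mathrm{diag}\left(\delta_{\delta_f}(x^0;S;T)T^\dagger S\right)\right\}
\end{equation*}
and the feasible set of Problem~\eqref{pro:minH_siHsj_allsymmat}. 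Here I would again use $\mathrm{col}(S)=\mathrm{col}(T)$ together with feasibility to argue that the $T^\top\alpha$ constraint is implied by the $S^\top\alpha$ one.

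Because the constraints decouple and the objective depends only on $H$, the set of minimizers of Problem~\eqref{pro:MFN} is $A\times\{H^*\}$, where $H^*$ is the unique solution of Problem~\eqref{pro:minH_siHsj_allsymmat}. That problem is feasible because Problem~\eqref{pro:MFN} is. Corollary~\ref{cor:GSHisH*_minH_siHsj_allsymmat} then gives $H^*=\nabla_s^2f(x^0;S;T)$, so $\nabla^2_{\mathrm{MFN}}f(x^0;Y)=\nabla_s^2f(x^0;S;T)$. Moreover, $\nabla_{\mathrm{MFN}}f(x^0;Y)$ can be any $\alpha\in A$, and $A$ is precisely the set of $\alpha$ making $(\alpha,\nabla_s^2f(x^0;S;T))$ feasible. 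This proves the first claim.

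For the poised case, uniqueness of the MFN solution forces $A$ to be a singleton. Since $A$ is an affine set $\alpha_0+\mathrm{null}(S^\top)$, it is a singleton only if $\mathrm{null}(S^\top)=\{\mymathbb{0}\}$, i.e., $S$ has full row rank. In that case $S^\top(S^\top)^\dagger$ restricted to the right-hand side is consistent, and the unique element of $A$ is obtained by applying $(S^\top)^\dagger$ to the right-hand side. This gives $\nabla_s f(x^0;S)-\frac{1}{2}(S^\top)^\dagger\mathrm{diag}(\delta_{\delta_f}(x^0;S;T)T^\dagger S)$. Alternatively, I would note that the unique element of $A$ coincides with its minimum-norm element, which is the pseudoinverse solution.

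I expect the main obstacle to be the decoupling argument. Specifically, I need to justify that the $T^\top\alpha$ equations are redundant given the $S^\top\alpha$ equations for every feasible $H$, and not merely for some. I would handle this by observing that feasibility of the whole system gives consistency of the combined $\alpha$-system. The $T^\top$ rows are linear combinations $M_1^\top S^\top$ of the $S^\top$ rows, and consistency forces the right-hand sides to obey the same combination, so no additional restriction on $\alpha$ arises.
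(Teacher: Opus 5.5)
Your proposal is correct and follows essentially the same route as the paper, which proves this result by repeating the argument of Theorem~\ref{thm:QS=MN_colT=colS}: decouple System~\eqref{constraints:MN/MFN_QS} into an $\alpha$-system and Problem~\eqref{pro:minH_siHsj_allsymmat}, then invoke Corollary~\ref{cor:GSHisH*_minH_siHsj_allsymmat}. The only change is that the MFN objective leaves $\alpha$ free in the affine set $A$ until poisedness forces $A$ to be a singleton, equal to its minimum-norm element $(S^\top)^\dagger(\cdot)$; your worry about redundancy ``for every feasible $H$'' is moot, since after the substitution the $\alpha$-equations no longer involve $H$ at all.
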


    \begin{remark}
        Comparing Theorems \ref{thm:QS=MN_colT=colS} and \ref{thm:QS=MFN_colT=colS}, we observe that the MN and MFN models coincide when $\mathrm{col}(T)=\mathrm{col}(S)$ and $Y$ is poised for MFN interpolation.  As we can see from the proof of Theorem \ref{thm:QS=MN_colT=colS}, this ultimately stems from the fact that this particular structure of $Y$ allows Problem \eqref{pro:MN} to be separable in $(\alpha,H)$.
    \end{remark}

    An interesting special case is where $S$ has full column rank and $T$ has the structure discussed in Example \ref{exp:symdeltadeltafMinv_specialT}.  The proof of Example \ref{exp:symdeltadeltafMinv_specialT} shows that System \eqref{constraints:MN/MFN_QS} is feasible.  Moreover, $\nabla_{\mathrm{MN}}f(x^0;Y)$ and $\nabla_{\mathrm{MN}}^2f(x^0;Y)$ are linear combinations of GSG and GSH.  In particular, $m^{x^0,Y}_{\mathrm{MN}}(x)$ belongs to the class of QS models of $f$ at $x^0$ over $Y$.
    \begin{corollary}\label{cor:MN=QS_specialT}
        Suppose $S$ has full column rank and $T$ is as in Example~\ref{exp:symdeltadeltafMinv_specialT}.  Then,  $\nabla_{\mathrm{MN}}f(x^0;Y) = \nabla_s f(x^0;S) + \nabla_s f(x^0-s^\ell;S) -\nabla_s f(x^0-s^\ell;2S)$ and $\nabla_{\mathrm{MN}}^2f(x^0;Y) = \nabla_s^2f(x^0;S;T)$.  In particular, $m^{x^0,Y}_{\mathrm{MN}}(x)$ belongs to the class of QS models of $f$ at $x^0$ over $Y$.
    \end{corollary}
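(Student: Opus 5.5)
The plan is to reduce the statement to Theorem~\ref{thm:QS=MN_colT=colS} and then evaluate the correction term in closed form. Throughout, write $\delta_{\delta_f}$ for $\delta_{\delta_f}(x^0;S;T)$.

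\textbf{Checking the hypotheses.} Write $T=U_S^\ell=SM$ with $M\in\mathbb{R}^{p\times p}$. For $\ell=0$ we have $M=I_p$. For $\ell>0$ we have $M=I_p-e^\ell(\mymathbb{1}+e^\ell)^\top$: its $j$-th column is $e^j-e^\ell$ for $j\neq\ell$, and its $\ell$-th column is $-e^\ell$. Since $(\mymathbb{1}+e^\ell)^\top e^\ell=2$, a one-line computation gives $M^2=I_p$. Hence $M$ is an invertible involution, and $\mathrm{col}(T)=\mathrm{col}(S)$.

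Feasibility of Problem~\eqref{pro:MN} was already obtained in the proof of Example~\ref{exp:symdeltadeltafMinv_specialT}, where System~\eqref{constraints:MN/MFN_QS} is shown to be feasible. Theorem~\ref{thm:QS=MN_colT=colS} therefore applies and gives $\nabla^2_{\mathrm{MN}}f(x^0;Y)=\nabla_s^2f(x^0;S;T)$ immediately. It also gives
\[
\nabla_{\mathrm{MN}}f(x^0;Y)=\nabla_sf(x^0;S)-\tfrac12(S^\top)^\dagger\mathrm{diag}(\delta_{\delta_f}T^\dagger S).
\]

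\textbf{Simplifying the correction term.} Because $S$ has full column rank, $S^\dagger S=I_p$. Together with invertibility of $M$ this gives $T^\dagger S=(SM)^\dagger S=M^{-1}S^\dagger S=M$; the identity $(SM)^\dagger=M^{-1}S^\dagger$ holds because $S$ has full column rank and $M$ is invertible. So the diagonal entries are explicit.

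For $\ell=0$, the $i$-th entry is $(\delta_{\delta_f})_{ii}$.

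For $\ell>0$ and $i\neq\ell$, the $i$-th entry is $(\delta_{\delta_f})_{ii}-(\delta_{\delta_f})_{i\ell}$. For $i=\ell$ it is $-(\delta_{\delta_f})_{\ell\ell}$.

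Each of these is then expanded using $(\delta_{\delta_f})_{ij}=f(x^0+s^i+t^j)-f(x^0+s^i)-f(x^0+t^j)+f(x^0)$, with $t^j=s^j-s^\ell$ for $j\neq\ell$ and $t^\ell=-s^\ell$.

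\textbf{Matching the GSG combination.} On the other side, use $(2S^\top)^\dagger=\tfrac12(S^\top)^\dagger$ to write
\[
\nabla_sf(x^0-s^\ell;S)-\nabla_sf(x^0-s^\ell;2S)=(S^\top)^\dagger w,
\]
where
\[
w_i=f(x^0-s^\ell+s^i)-f(x^0-s^\ell)-\tfrac12\bigl(f(x^0-s^\ell+2s^i)-f(x^0-s^\ell)\bigr).
\]
Here $s^0:=\mymathbb{0}$, so for $\ell=0$ this reads $w_i=-\tfrac12\bigl(f(x^0+2s^i)-2f(x^0+s^i)+f(x^0)\bigr)$. The claim then reduces to the entrywise identity $w=-\tfrac12\mathrm{diag}(\delta_{\delta_f}T^\dagger S)$, i.e. $-\tfrac12$ times the entries listed above.

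For $\ell=0$ the identity holds at once, since $(\delta_{\delta_f})_{ii}=f(x^0+2s^i)-2f(x^0+s^i)+f(x^0)$.

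For $\ell>0$, in the difference $(\delta_{\delta_f})_{ii}-(\delta_{\delta_f})_{i\ell}$ the terms $f(x^0+2s^i-s^\ell)$, $f(x^0+s^i)$, $f(x^0+s^i-s^\ell)$, $f(x^0-s^\ell)$ and $f(x^0)$ should survive. They should combine to $f(x^0+2s^i-s^\ell)-2f(x^0+s^i-s^\ell)+f(x^0-s^\ell)$ plus a multiple of $f(x^0+s^i)-f(x^0)$. That multiple has to be absorbed into the $\nabla_sf(x^0;S)$ term, so the bookkeeping of which terms go with $\nabla_s f(x^0;S)$ must be checked carefully. The case $i=\ell$ is handled the same way.

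I expect this sign and term bookkeeping to be the main obstacle. If the constants do not match, I will first re-derive $T^\dagger S$, to rule out $M^\top$ appearing in place of $M$. I will then recheck the definition of $U_S^\ell$ against \cite[Prop.~5.7]{hare2024matrix}.

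\textbf{Conclusion.} Once the gradient formula is established, both $\nabla_{\mathrm{MN}}f(x^0;Y)$ and $\nabla^2_{\mathrm{MN}}f(x^0;Y)$ are linear combinations of GSGs and a GSH built from points of $Y$. The points $x^0-s^\ell+s^i$ and $x^0-s^\ell+2s^i$ are exactly $x^0+t^j$ and $x^0+s^i+t^j$, and together the constructions use all of $Y$. Hence $m^{x^0,Y}_{\mathrm{MN}}$ is a QS model.
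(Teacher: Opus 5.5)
Your route is the same as the paper's. You invoke Theorem~\ref{thm:QS=MN_colT=colS}, with feasibility taken from the proof of Example~\ref{exp:symdeltadeltafMinv_specialT}. You write $T=SM$ with the involution $M=I_p-e^\ell(\mymathbb{1}+e^\ell)^\top$, deduce $T^\dagger S=M$, and read off $\mathrm{diag}(\delta_{\delta_f}T^\dagger S)$. Your diagonal entries, $(\delta_{\delta_f})_{ii}-(\delta_{\delta_f})_{i\ell}$ for $i\neq\ell$ and $-(\delta_{\delta_f})_{\ell\ell}$, are correct.

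The gap is that you never carry out the one computation that proves the gradient formula for $\ell>0$. Your prediction of its outcome is also wrong in a way that would derail it. You expect $f(x^0+s^i)$ and $f(x^0)$ to survive, with a multiple of $f(x^0+s^i)-f(x^0)$ ``absorbed into the $\nabla_sf(x^0;S)$ term.'' No absorption is possible. The formula from Theorem~\ref{thm:QS=MN_colT=colS} and the target both carry $\nabla_sf(x^0;S)$ with coefficient exactly $1$. Since $S$ has full column rank, $(S^\top)^\dagger$ is injective on $\mathbb{R}^p$. So the claim is \emph{equivalent} to your own entrywise identity $w=-\tfrac12\mathrm{diag}(\delta_{\delta_f}T^\dagger S)$, and any nonzero surviving multiple would make the corollary false.

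What actually happens is that those terms cancel identically:
\begin{align*}
(\delta_{\delta_f})_{ii}-(\delta_{\delta_f})_{i\ell}
&=\bigl[f(x^0+2s^i-s^\ell)-f(x^0+s^i)-f(x^0+s^i-s^\ell)+f(x^0)\bigr]\\
&\quad-\bigl[f(x^0+s^i-s^\ell)-f(x^0+s^i)-f(x^0-s^\ell)+f(x^0)\bigr]\\
&=f(x^0+2s^i-s^\ell)-2f(x^0+s^i-s^\ell)+f(x^0-s^\ell).
\end{align*}
Likewise $-(\delta_{\delta_f})_{\ell\ell}=f(x^0+s^\ell)+f(x^0-s^\ell)-2f(x^0)$ is the same expression evaluated at $i=\ell$. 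Hence for every $\ell$, including $\ell=0$ with $s^0=\mymathbb{0}$,
\[
\mathrm{diag}(\delta_{\delta_f}T^\dagger S)=\delta_f(x^0-s^\ell;2S)-2\delta_f(x^0-s^\ell;S)=-2w,
\]
which is exactly the uniform formula the paper uses. With this expansion in place of your hedged bookkeeping paragraph, your argument is complete and coincides with the paper's proof.
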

    \begin{proof}
        Let $T=U_S^\ell$.  Notice that $\mathrm{col}(S)=\mathrm{col}(T)$.  For any vector $v$, we define $v^0:=\mymathbb{0}$.  Define $M^\ell:=I_p-e^\ell\mymathbb{1}^\top-e^\ell(e^\ell)^\top$.  Then, $T=SM^\ell$.  Since $(M^\ell)^{-1}=M^\ell$ and $S$ has full column rank, $T^\dagger S=M^\ell S^\dagger S=M^\ell$.  Thus, the $i$-th element of $\mathrm{diag}(\delta_{\delta_f}(x^0;S;T)T^\dagger S)$ is $f(x^0+2s^i-s^\ell)-2f(x^0+s^i-s^\ell)+f(x^0-s^\ell)$ and so $\mathrm{diag}(\delta_{\delta_f}(x^0;S;T)T^\dagger S) = \delta_f(x^0-s^\ell;2S)-2\delta_f(x^0-s^\ell;S)$.  Therefore, 
        \begin{align*}
            \nabla_{\mathrm{MN}}f(x^0;Y) &= \nabla_s f(x^0;S)-\frac{1}{2}\left(S^\top\right)^\dagger\left(\delta_f(x^0-s^\ell;2S)-2\delta_f(x^0-s^\ell;S)\right)\\
            &= \nabla_s f(x^0;S) + \left(S^\top\right)^\dagger\delta_f(x^0-s^\ell;S) - \left(2S^\top\right)^\dagger\delta_f(x^0-s^\ell;2S)\\
            &= \nabla_s f(x^0;S) + \nabla_s f(x^0-s^\ell;S) -\nabla_s f(x^0-s^\ell;2S).
        \end{align*}
    \end{proof}
    \begin{remark}\label{rem:specialMNgrad=GACSG}
        When $\ell=0$, $\nabla_{\mathrm{MN}}f(x^0;Y)=2\nabla_s f(x^0;S)-\nabla_s f(x^0;2S)$ and $\nabla_{\mathrm{MN}}^2f(x^0;Y)=\nabla_s^2f(x^0;S;S)$.  This extends the result in \cite[Thm.~1]{chen2024qfully}, where it was only proved that the corresponding model satisfies interpolation conditions. Moreover, $2\nabla_s f(x^0;S)-\nabla_s f(x^0;2S)$ is the \emph{generalized adapted centred simplex gradient} over $\mathbb{Y}=\mathbb{Y}^+\cup\widetilde{\mathbb{Y}}$, where $\mathbb{Y}^+=\{x^0,x^0+s^i:i\in [p]\}$ and $\widetilde{\mathbb{Y}}=\{x^0,x^0+2s^i:i\in [p]\}$.  Detailed explanations and more properties of the generalized adapted centred simplex gradient can be found in \cite{chen2023adapting,chen2025general}.
    \end{remark}

    Similarly, we obtain corresponding results for the MFN model.  Once again, an additional assumption that $Y$ is poised for MFN interpolation is required to guarantee the uniqueness of $\nabla_{\mathrm{MFN}} f(x^0; Y)$.
    \begin{corollary}\label{cor:MFN=QS_specialT}
        Suppose $S$ has full column rank and $T$ is as in Example~\ref{exp:symdeltadeltafMinv_specialT}. Then, $\nabla_{\mathrm{MFN}}^2f(x^0;Y) = \nabla_s^2f(x^0;S;T)$.  If $Y$ is poised for MFN interpolation, then $\nabla_{\mathrm{MFN}}f(x^0;Y) = \nabla_s f(x^0;S) + \nabla_s f(x^0-s^\ell;S) -\nabla_s f(x^0-s^\ell;2S)$ and $m^{x^0,Y}_{\mathrm{MFN}}(x)$ belongs to the class of QS models of $f$ at $x^0$ over $Y$.
    \end{corollary}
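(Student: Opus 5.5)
The proof mirrors that of Corollary~\ref{cor:MN=QS_specialT}, with Theorem~\ref{thm:QS=MFN_colT=colS} replacing Theorem~\ref{thm:QS=MN_colT=colS}. First, I would establish feasibility. By the proof of Example~\ref{exp:symdeltadeltafMinv_specialT}, which combines \cite[Prop.~5.7]{hare2024matrix} with \cite[Thm.~5]{chen2026relationships}, there exists $\alpha\in\mathbb{R}^n$ such that $(\alpha,\nabla_s^2f(x^0;S;T))$ satisfies System~\eqref{constraints:MN/MFN_QS}. Since System~\eqref{constraints:MN/MFN_QS} is equivalent to the constraints of Problem~\eqref{pro:MFN}, Problem~\eqref{pro:MFN} is feasible.

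Next, I would check the column-space condition. With $T=U_S^\ell=SM^\ell$, where $M^\ell=I_p-e^\ell\mymathbb{1}^\top-e^\ell(e^\ell)^\top$, the matrix $M^\ell$ is an involution and hence invertible. Therefore $\mathrm{col}(T)=\mathrm{col}(S)$. The hypotheses of Theorem~\ref{thm:QS=MFN_colT=colS} now hold, and it gives $\nabla_{\mathrm{MFN}}^2f(x^0;Y)=\nabla_s^2f(x^0;S;T)$ directly, with no poisedness assumption. This is the first claim.

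For the gradient, assume $Y$ is poised for MFN interpolation. Theorem~\ref{thm:QS=MFN_colT=colS} then gives
\[
\nabla_{\mathrm{MFN}}f(x^0;Y)=\nabla_s f(x^0;S)-\tfrac12(S^\top)^\dagger\mathrm{diag}\bigl(\delta_{\delta_f}(x^0;S;T)T^\dagger S\bigr).
\]
This is the same expression that appears for the MN gradient in Theorem~\ref{thm:QS=MN_colT=colS}. So I would reuse the computation from the proof of Corollary~\ref{cor:MN=QS_specialT} without change. Because $S$ has full column rank, $T^\dagger S=M^\ell$. The $i$-th diagonal entry is then $f(x^0+2s^i-s^\ell)-2f(x^0+s^i-s^\ell)+f(x^0-s^\ell)$, so
\[
\mathrm{diag}(\cdot)=\delta_f(x^0-s^\ell;2S)-2\delta_f(x^0-s^\ell;S).
\]
Finally, use $(2S^\top)^\dagger=\tfrac12(S^\top)^\dagger$ to rewrite the result as $\nabla_s f(x^0;S)+\nabla_s f(x^0-s^\ell;S)-\nabla_s f(x^0-s^\ell;2S)$.

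The last step is to confirm membership in the QS class. The gradient is a linear combination of GSGs and the Hessian is a GSH. Their combined sample points must equal $Y$. This holds because the points $x^0+s^i+t^j$, $x^0+s^i$, $x^0+t^j$ and $x^0$ all appear in the GSH construction, which already uses all of $Y$. The extra gradient points $x^0-s^\ell+s^i$ and $x^0-s^\ell+2s^i$ are of the form $x^0+t^j$ or $x^0+s^i+t^j$, so they lie in $Y$.

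The main obstacle is this point-set bookkeeping, in particular the case $\ell=0$. There $T=S$, and the point $x^0+2s^i=x^0+s^i+t^i$ lies in $Y$. For $\ell>0$ the bookkeeping also covers the index $i=\ell$, where $x^0-s^\ell+s^\ell=x^0$ and $x^0+s^\ell$ both lie in $Y$ directly. Everything else follows from earlier results.
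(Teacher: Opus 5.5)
Your proposal is correct and follows exactly the route the paper intends. The paper gives no separate proof, saying only that the result follows ``similarly'' to Corollary~\ref{cor:MN=QS_specialT}: combine feasibility from Example~\ref{exp:symdeltadeltafMinv_specialT} and $\mathrm{col}(T)=\mathrm{col}(S)$ with Theorem~\ref{thm:QS=MFN_colT=colS}, then reuse the computation $T^\dagger S=M^\ell$. Your explicit checks that $M^\ell$ is an involution and that every sample point of the gradient lies in $Y$ (including the cases $\ell=0$ and $i=\ell$) supply details the paper leaves implicit.
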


    \begin{remark}
        Comparing Corollaries \ref{cor:MN=QS_specialT} and \ref{cor:MFN=QS_specialT}, we observe that the MN, MFN, and QS models coincide when $S$ has full column rank, $T$ has the structure given in Example \ref{exp:symdeltadeltafMinv_specialT}, and $Y$ is poised for MFN interpolation.
    \end{remark}

\subsection{Relationships among MN, MFN, and QS models in the case of different $T_i$}
    In this subsection, we suppose that all $T_i$ are different.  We show that, when $S$ and $T_i$ have a specific structure, results similar to these in Subsection \ref{subsubsec:QS=MN&MFN_colT=colS} exist.
    
    The following theorem is analogous to Theorem~\ref{thm:QS=MN_colT=colS}. Given the structures of $S$ and $T_i$, the MN model can be constructed using GSG and GSH. In particular, it belongs to the class of QS models of $f$ at $x^0$ over $Y$. Notably, these structures alone are sufficient to establish the results.  No assumptions on the column spaces are required.
    \begin{theorem}\label{thm:QS=MN_diffTi}
        Suppose that $S=[e^1\cdots e^p]$ and $T_i=[-e^i]$, $i\in [p]$, where $p\le n$. Then, we have $\nabla_{\mathrm{MN}}f(x^0;Y) = \frac{1}{2}\left(\nabla_s f(x^0;S)+\nabla_s f(x^0;-S)\right)$ and $\nabla_{\mathrm{MN}}^2f(x^0;Y) = \nabla_s^2f(x^0;S;T_{1:p})$.  In particular, $m^{x^0,Y}_{\mathrm{MN}}(x)$ belongs to the class of QS models of $f$ at $x^0$ over $Y$.
    \end{theorem}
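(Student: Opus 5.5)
Here $Y=\{x^0,x^0+e^i,x^0-e^i:i\in[p]\}$: the point $x^0+s^i+t_i^1=x^0$ is already in $Y$. So the plan is to write Problem~\eqref{pro:MN} out explicitly, exactly as in the proof of Lemma~\ref{lem:|di(MNMFNHess-Hess)di|}. Adding and subtracting the constraints for $\pm e^i$ gives the equivalent system
\begin{align*}
\begin{cases}
&\alpha_i = \tfrac{1}{2}\left(f(x^0+e^i)-f(x^0-e^i)\right),~~~i\in[p],\\
&H_{ii} = f(x^0+e^i)+f(x^0-e^i)-2f(x^0),~~~i\in[p].
\end{cases}
\end{align*}
This system is always feasible, so Problem~\eqref{pro:MN} has a unique solution for every $f$.

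The key observation is that the problem is now fully separable. The objective is $\frac{1}{2}\sum_k\alpha_k^2+\frac{1}{2}\sum_{k,l}H_{kl}^2$, and the constraints fix only $\alpha_1,\ldots,\alpha_p$ and $H_{11},\ldots,H_{pp}$. The minimizer therefore sets every other entry to zero:
\begin{itemize}
\item $\alpha_k=0$ for $k>p$;
\item $H_{kl}=0$ for $k\neq l$, and $H_{kk}=0$ for $k>p$.
\end{itemize}
In particular the diagonal matrix obtained this way is automatically symmetric, so the symmetry constraint $H\in S_n(\mathbb{R})$ is inactive and causes no trouble.

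Next I compute the GSG and GSH on the right-hand side and match them entrywise.
\begin{itemize}
\item Since $S=[e^1\cdots e^p]$ has orthonormal columns, $(S^\top)^\dagger=S$. Hence $\nabla_sf(x^0;S)=\sum_{i}(f(x^0+e^i)-f(x^0))e^i$. Similarly $(-S^\top)^\dagger=-S$ gives $\nabla_sf(x^0;-S)=\sum_i(f(x^0)-f(x^0-e^i))e^i$. Averaging the two yields exactly the $\alpha$ above, including the zero entries for $k>p$.
\item For the Hessian, I reuse the computation in the proof of Lemma~\ref{lem:|di(GSH-Hess)di|} with $d^i=e^i$, which is valid since Assumption~\ref{ass:structuredS&Ti} holds here. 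It gives $\nabla_s^2f(x^0;S;T_{1:p})=S\,\mathrm{Diag}\bigl(f(x^0+e^i)+f(x^0-e^i)-2f(x^0)\bigr)_{i\in[p]}S^\top$. This is the $n\times n$ diagonal matrix with those entries in its first $p$ diagonal positions and zeros elsewhere, which matches the MN Hessian.
\end{itemize}

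Finally, I check that the result is a QS model. Both $\nabla_sf(x^0;\pm S)$ and $\nabla_s^2f(x^0;S;T_{1:p})$ use only points of $Y$, and together they use all of $Y$. So $m^{x^0,Y}_{\mathrm{MN}}$ is in the QS class by definition.

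I do not expect a real obstacle. The only points needing care are the ones flagged above: the pseudoinverses for the orthonormal $S$ (and $-S$) in the case $p<n$, and confirming that the symmetry constraint is not binding in Problem~\eqref{pro:MN}.
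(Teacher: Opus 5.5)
Your proposal is correct and follows essentially the same route as the paper. Both reduce the constraints of Problem~\eqref{pro:MN} by row operations to conditions that fix only $\alpha_i$ and $H_{ii}$ for $i\in[p]$, take the separable minimum-norm solution, and match it with the averaged GSG and the diagonal GSH; your explicit checks of $(S^\top)^\dagger=S$ and of the GSH via the computation in Lemma~\ref{lem:|di(GSH-Hess)di|} just fill in the step the paper calls ``straightforward.''
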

    \begin{proof}
        Note that $Y=\{x^0,x^0+e^i,x^0-e^i:i\in [p]\}$.  Direct row operations show that the equality constraints of Problem~\eqref{pro:MN} are equivalent to
        \begin{align*}
        \begin{cases}
            &\left(e^i\right)^\top\alpha = \frac{1}{2}\left(f(x^0+e^i)-       f(x^0)\right)-\frac{1}{2}\left(f(x^0-e^i)-       f(x^0)\right),~~~i\in [p],\\
            &\left(e^i\right)^\top H\left(e^i\right) = f(x^0+e^i) + f(x^0-e^i)-2f(x^0),~~~i\in [p].
        \end{cases}
        \end{align*} 
        Writing the first set of equalities as $S^\top\alpha=\frac{1}{2}(\delta_f (x^0;S)-\delta_f(x^0;-S))$, we get
        \begin{align*}
            \nabla_{\mathrm{MN}}f(x^0;Y) &= \frac{1}{2}\left(\nabla_s f(x^0;S)+\nabla_s f(x^0;-S)\right)\\
            \nabla_{\mathrm{MN}}^2f(x^0;Y) &= \mathrm{Diag}\left(\begin{bmatrix}
                f(x^0+e^1) + f(x^0-e^1)-2f(x^0)\\
                \vdots\\
                f(x^0+e^p) + f(x^0-e^p)-2f(x^0)\\
                \mymathbb{0}
            \end{bmatrix}\right) \in \mathbb{R}^{n\times n}.
        \end{align*}
        It is straightforward to check that $\nabla_{\mathrm{MN}}^2f(x^0;Y)=\nabla_s^2f(x^0;S;T_{1:p})$.
    \end{proof}
    \begin{remark}
        The $\frac{1}{2}(\nabla_s f(x^0;S)+\nabla_s f(x^0;-S))$ is the \emph{generalized centred simplex gradient} over $\mathbb{Y}=\mathbb{Y}^+\cup\mathbb{Y}^-$, where $\mathbb{Y}^+=\{x^0,x^0+s^i:i\in [p]\}$ and $\mathbb{Y}^-=\{x^0,x^0-s^i:i\in [p]\}$. Detailed explanations and more properties of the generalized centred simplex gradient can be found in \cite{chen2025general,hare2022error}.  In particular, we note that this is a special case of the generalized adapted centred simplex gradient \cite{chen2023adapting,chen2025general} mentioned in Remark~\ref{rem:specialMNgrad=GACSG}.
        
        Moreover, with the structure of $S$ and $T_{1:p}$ defined in Theorem~\ref{thm:QS=MN_diffTi}, the GSH coincides with the \emph{generalized centered simplex Hessian}, and its diagonal elements form the \emph{centered simplex Hessian diagonal}.  Detailed discussions can be found in \cite{jarry2025using}.
    \end{remark}
    
    Once again, similar results exist for the MFN model, with the uniqueness of $\nabla_{\mathrm{MFN}}f(x^0;Y)$ guaranteed by the poisedness of $Y$ for MFN interpolation.
    \begin{theorem}\label{thm:QS=MFN_diffTi}
        Suppose that $S=[e^1\cdots e^p]$ and $T_i=[-e^i]$, $i\in [p]$, where $p\le n$. Then, we have $\nabla_{\mathrm{MFN}}^{2} f(x^0;Y) = \nabla_s^2f(x^0;S;T_{1:p})$. If $Y$ is poised for MFN interpolation, then $\nabla_{\mathrm{MFN}}f(x^0;Y) = \frac{1}{2}\left(\nabla_s f(x^0;S)+\nabla_s f(x^0;-S)\right)$ and $m^{x^0,Y}_{\mathrm{MFN}}(x)$ belongs to the class of QS models of $f$ at $x^0$ over $Y$.
    \end{theorem}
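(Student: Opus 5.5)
I would follow the proof of Theorem~\ref{thm:QS=MN_diffTi} essentially verbatim, replacing Problem~\eqref{pro:MN} by Problem~\eqref{pro:MFN}. With $S=[e^1\cdots e^p]$ and $T_i=[-e^i]$ we have $Y=\{x^0,x^0\pm e^i:i\in[p]\}$. The same row operations as before (adding and subtracting the constraints for $x^0+e^i$ and $x^0-e^i$) show that the feasible set of Problem~\eqref{pro:MFN} is described by two decoupled systems:
\begin{equation*}
(e^i)^\top\alpha=\tfrac12\bigl(f(x^0+e^i)-f(x^0-e^i)\bigr)
\quad\text{and}\quad
H_{ii}=f(x^0+e^i)+f(x^0-e^i)-2f(x^0),\qquad i\in[p].
\end{equation*}
Both systems are always feasible, so Problem~\eqref{pro:MFN} is feasible for every $f$. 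Since the objective $\frac12\|H\|_F^2$ involves only $H$ and the constraints separate in $(\alpha,H)$, the problem splits into two independent parts. In the $H$-part we minimize $\|H\|_F^2$ over symmetric $H$ with prescribed diagonal entries $H_{ii}$, $i\in[p]$. Its unique minimizer sets every other entry to zero, giving the diagonal matrix displayed in the proof of Theorem~\ref{thm:QS=MN_diffTi}. That proof already identified this matrix with $\nabla_s^2f(x^0;S;T_{1:p})$, which yields the first claim for arbitrary $f$, with no poisedness needed.

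For the gradient, the MFN objective puts no penalty on $\alpha$, so $\nabla_{\mathrm{MFN}}f(x^0;Y)$ is any solution of $S^\top\alpha=\frac12(\delta_f(x^0;S)-\delta_f(x^0;-S))$. The remaining coordinates $\alpha_{p+1},\dots,\alpha_n$ are free, so the solution set is an affine subspace. If $Y$ is poised for MFN interpolation, the minimizer is unique by definition, so this affine set must be a single point. That forces $p=n$ (otherwise $Y$ cannot be poised). Then $S=I_n$ and the unique solution is $\frac12(\delta_f(x^0;S)-\delta_f(x^0;-S))$. Writing $\nabla_sf(x^0;S)=(S^\top)^\dagger\delta_f(x^0;S)$ and $\nabla_sf(x^0;-S)=-(S^\top)^\dagger\delta_f(x^0;-S)$, and using $(S^\top)^\dagger=I_n$, this equals $\frac12(\nabla_sf(x^0;S)+\nabla_sf(x^0;-S))$. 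Equivalently, under poisedness it coincides with the MN gradient from Theorem~\ref{thm:QS=MN_diffTi}, since both are then the unique solution of the same linear system.

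Finally, the resulting model uses a linear combination of GSGs and one GSH. The union of their sample points is $\{x^0,x^0\pm e^i\}=Y$, so $m^{x^0,Y}_{\mathrm{MFN}}$ belongs to the class of QS models. The only step needing care is the uniqueness argument for $\alpha$: one must observe that poisedness forces $\mathrm{col}(S)=\mathbb{R}^n$, so that the minimum-norm GSG formula agrees with the unique feasible $\alpha$. Everything else carries over from the MN proof.
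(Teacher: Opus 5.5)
Your proof is correct and follows the route the paper intends: the row operations from the proof of Theorem~\ref{thm:QS=MN_diffTi} decouple the MFN constraints, the $H$-part has the diagonal matrix equal to $\nabla_s^2f(x^0;S;T_{1:p})$ as its unique minimizer for every $f$, and poisedness makes the feasible $\alpha$ unique, hence equal to $\frac{1}{2}\left(\nabla_s f(x^0;S)+\nabla_s f(x^0;-S)\right)$. Your remark that poisedness forces $p=n$ is a valid sharpening but not actually required, since that GSG combination always solves $S^\top\alpha=\frac{1}{2}\left(\delta_f(x^0;S)-\delta_f(x^0;-S)\right)$ and therefore equals the unique solution whenever the solution is unique.
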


    \begin{remark}
        Comparing Theorems \ref{thm:QS=MN_diffTi} and \ref{thm:QS=MFN_diffTi}, we observe that the MFN, MN, and QS models coincide when $S=[e^1\cdots e^p]$, $T_i=[-e^i]$, and $Y$ is poised for MFN interpolation.  Once again, this is because this particular structure of $Y$ allows Problem \eqref{pro:MN} to be separable in $(\alpha,H)$.
    \end{remark}

\section{Conclusion}\label{sec:conclusion}
    In this paper, we investigated the approximation accuracy and relationships among the MN, MFN, and QS models.  We established fully linear error bounds and derived directional Hessian error bounds for all three models.  In particular, we showed that their Hessian approximations exhibit directional errors ranging from $\mathcal{O}(1)$ to $\mathcal{O}(\Delta_{D^Y})$ and achieve fully quadratic accuracy along sample directions. These results demonstrate that meaningful second-order information can be obtained without fully determined quadratic interpolation.  We also clarified the structural relationships among the three models, identifying conditions under which they coincide.  These insights not only deepen theoretical understanding but also point toward simpler and potentially more efficient strategies for constructing quadratic models in DFO.
    
    A natural direction for future work is to study how different weightings of $\|\alpha\|^2$ and $\|H\|_F^2$ in the objective of Problem \eqref{pro:MN} affect model accuracy and construction. The analysis developed here for the MN and MFN models extends directly to this more general setting.

\bibliographystyle{siam}
\bibliography{references}

@article{hare2024matrix,
  title={A matrix algebra approach to approximate Hessians},
  author={Hare, W. and Jarry-Bolduc, G. and Planiden, C.},
  journal={IMA J. Numer. Anal.},
  volume={44},
  number={4},
  pages={2220--2250},
  year={2024},
  publisher={Oxford University Press}
}

@article{chen2024qfully,
  title={Q-fully quadratic modeling and its application in a random subspace derivative-free method},
  author={Chen, Y. and Hare, W. and Wiebe, A.},
  journal={Comput. Optim. Appl.},
  volume={89},
  number={2},
  pages={317--360},
  year={2024},
  publisher={Springer}
}

@article{chen2023adapting,
    author = {Chen, Y. and Hare, W.},
    title = {Adapting the centred simplex gradient to compensate for misaligned sample points},
    journal = {IMA J. Numer. Anal.},
    year = {2023},
    issn = {0272-4979}
}

@article{chen2025general,
  title={A general framework for floating point error analysis of first-order simplex derivatives},
  author={Chen, Y. and Hare, W. and Wiebe, A.},
  journal={Optim. Methods Softw.},
  year={2025},
  publisher={Taylor \& Francis}
}

@book{conn2009introduction,
  title={Introduction to derivative-free optimization},
  author={Conn, A. R. and Scheinberg, K. and Vicente, L. N.},
  year={2009},
  publisher={SIAM}
}

@incollection{conn1996algorithm,
  title={An algorithm using quadratic interpolation for unconstrained derivative free optimization},
  author={Conn, A. R. and Toint, Ph. L.},
  booktitle={Nonlinear Optimization and Applications},
  pages={27--47},
  year={1996},
  publisher={Springer}
}

@inproceedings{conn1998derivative,
  title={A derivative free optimization algorithm in practice},
  author={Conn, A. R. and Scheinberg, K. and Toint, Ph. L},
  booktitle={7th AIAA/USAF/NASA/ISSMO Symposium on Multidisciplinary Analysis and Optimization},
  year={1998}
}

@article{conn2008geometry,
  title = {Geometry of Sample Sets in Derivative-Free Optimization: Polynomial Regression and Underdetermined Interpolation},
  author = {Conn, A. R. and Scheinberg, K. and Vicente, L. N.},
  year = {2008},
  journal = {IMA J. Numer. Anal.},
  volume = {28},
  number = {4},
  pages = {721--748}
}

@misc{roberts2025introduction,
  title={Introduction to Interpolation-Based Optimization},
  author={Roberts, Lindon},
  note = {\url{https://arxiv.org/abs/2510.04473}},
  year={2025}
}

@article{jarry2025using,
    author = {Jarry-Bolduc, Gabriel and Planiden, Chayne},
    title = {Using generalized simplex methods to approximate derivatives},
    journal = {IMA J. Numer. Anal.},
    year = {2025},
    month = {06}
}

@misc{chen2026relationships,
  title={Relationships between full-space and subspace quadratic interpolation models and simplex derivatives}, 
  author={Chen, Y.},
  year={2026},
  archivePrefix={arXiv},
  primaryClass={math.OC},
  note = {\url{https://arxiv.org/abs/2602.10374}}
}

@article{cartis2023scalable,
  title={Scalable subspace methods for derivative-free nonlinear least-squares optimization},
  author={Cartis, C. and Roberts, L.},
  journal={Math. Program.},
  volume={199},
  number={1-2},
  pages={461--524},
  year={2023},
  publisher={Springer}
}

@article{hare2022error,
  title={Error bounds for overdetermined and underdetermined generalized centred simplex gradients},
  author={Hare, W. and Jarry-Bolduc, G. and Planiden, C.},
  journal={IMA J. Numer. Anal.},
  volume={42},
  number={1},
  pages={744--770},
  year={2022},
  publisher={Oxford University Press}
}

@incollection{Audet2020,
    author={Audet, C.
    and Hare, W.},
    title={Model-based methods in derivative-free nonsmooth optimization},
    bookTitle={Numerical Nonsmooth Optimization},
    year={2020},
    publisher={Springer},
    pages={655--691}
}

@misc{Cao2023,
  title = {The Error in Multivariate Linear Extrapolation with Applications to Derivative-Free Optimization},
  author = {Cao, L. and Wen, Z. and Yuan, Y.-X.},
  note = {\url{https://arxiv.org/abs/2307.00358}},
  year={2023}
}

@book{audet2017derivative,
  title={Derivative-free and blackbox optimization},
  author={Audet, C. and Hare, W.},
  year={2017},
  publisher={Springer}
}

@article{custodio2007using,
  title={Using sampling and simplex derivatives in pattern search methods},
  author={Cust{\'o}dio, A. L. and Vicente, L. N.},
  journal={SIAM J. Optim.},
  volume={18},
  number={2},
  pages={537--555},
  year={2007},
  publisher={SIAM}
}

@article{custodio2008using,
  title={Using simplex gradients of nonsmooth functions in direct search methods},
  author={Cust{\'o}dio, A. L. and Dennis, J. E. and Vicente, L. N.},
  journal={IMA J. Numer. Anal.},
  volume={28},
  number={4},
  pages={770--784},
  year={2008},
  publisher={OUP}
}

@article{regis2015calculus,
  title={The calculus of simplex gradients},
  author={Regis, R. G.},
  journal={Optim. Lett.},
  volume={9},
  number={5},
  pages={845--865},
  year={2015},
  publisher={Springer}
}
\end{document}